\documentclass[11pt]{article}
\makeatletter

\usepackage[english]{babel}

\usepackage[utf8]{inputenc}

\usepackage{tikz}
\usetikzlibrary{matrix,arrows,decorations.pathmorphing}
\usepackage{tikz-cd}

\usepackage{lmodern}
\usepackage[T1]{fontenc}
\usepackage{amssymb}
\usepackage{latexsym}
\usepackage{mathtools}
\usepackage{hyperref}
\usepackage{verbatim}
\usepackage{authblk}

\usepackage{amsthm}
\theoremstyle{plain}

\newtheorem{thm}{Theorem}[section]
 
\newtheorem{cor}[thm]{Corollary}
\newtheorem{lem}[thm]{Lemma}
\newtheorem{prop}[thm]{Proposition}

\theoremstyle{definition}
\newtheorem{rem}[thm]{Remark}

\newtheorem{exmp}[thm]{Example}

\def\gcd{\mathrm{GCD}}

\def\deg{\mathrm{deg}}
\def\aut{\mathrm{Aut}}
\def\div{\mathrm{div}}
\def\ord{\mathrm{ord}}

\def\cX{\mathcal{X}}
\def\cE{\mathcal{E}}
\def\K{\mathbb{K}}
\def\gg{g(\mathcal{X})}

\title{Curves with a large automorphism group admitting a cyclic subgroup of index $2$}
\author[1]{Arianna Dionigi}
\author[2]{Massimo Giulietti}
\author[2]{Marco Timpanella}

\affil[1]{%
Department of Mathematics Ulisse Dini,
University of Florence,
Via Giovanni Battista Morgagni 67/a,
50134 Florence, Italy}

\affil[2]{%
Department of Mathematics and Computer Science,
University of Perugia,
Via Luigi Vanvitelli 1,
06123 Perugia, Italy}
\date{}
\begin{document}
\maketitle

\begin{abstract}
The Hurwitz bound on the order of the $\mathbb K$-automorphism group ${\rm{Aut}}({\mathcal{X}})$ of an algebraic curve ${\mathcal{X}}$ of genus $g(\mathcal{X})\ge 2$ 
defined over a field $\mathbb K$ of zero characteristic states that $|{\rm{Aut}}({\mathcal{X}})|\le 84(g(\mathcal{X})-1)$.
Improved bounds are available for the order of certain types of subgroups within automorphism groups.
 For instance, if a subgroup $H$ of ${\rm{Aut}}({\mathcal{X}})$ is dihedral, then in the complex case, $|H| \leq 4g(\mathcal{X}) + 4$. More recently it has been shown that a tighter bound holds for $H$ a generalized quasi-dihedral group.
 In this paper we explore the more general setting of a curve defined over a field of any characteristic, and $H$ a group admitting a cyclic subgroup of index two. We show that the same upper bound for the size of a dihedral group of automorphisms holds for curves defined over an algebraically closed field of characteristic $p\ne 2$. Then we provide some classification results about (non-dihedral) groups of size larger than $4g(\cX)+4$ admitting a cyclic subgroup of index $2$.
\end{abstract}
\medskip
\noindent\textbf{Keywords:}
Algebraic curves, positive characteristic, automorphism groups.

\smallskip
\noindent\textbf{2020 Mathematics Subject Classification:}
Primary 14H37; Secondary 14H05.

\section{Introduction}

In this paper, let $\mathcal{X}$ denote a projective, geometrically irreducible, nonsingular algebraic curve defined over an algebraically closed field $\mathbb{K}$ with characteristic $p$. We denote the automorphism group of $\mathcal{X}$ that fixes $\mathbb{K}$ elementwise by $\aut(\mathcal{X})$.
By a classical result, $\aut(\mathcal{X})$ is finite if the genus $g(\mathcal{X})$ of $\mathcal{X}$ is at least two; see, for example, \cite[Chapter 11]{HKT}. So, throughout the paper we assume that $\cX$ has genus $g(\cX)\ge 2$.

Generally speaking, numerous authors have studied curves over the complex field (or, equivalently, Riemann surfaces) that admit automorphism groups of specific type and of relatively large order compared to their genus. These investigations often involve classifying such surfaces and analyzing the structural conditions under which they can possess "large" symmetry groups. This typically requires tools from group theory, complex geometry, and topology. However, in the case of positive characteristic, the available methods are more limited and the approach must rely entirely on algebraic techniques.

If $H$ is a cyclic group acting on a curve $\mathcal X$, then $|H| \leq 4g(\mathcal{X}) + 2$. A complete characterization of algebraic curves  with an automorphism of order at least $2g(\mathcal{X}) + 1$ was given by Irokawa and Sasaki \cite{sasaki} in the complex case and by the authors  in the general case \cite{DGT}. 
In this paper, we investigate curves having an automorphism group $H$ admitting a cyclic subgroup $G$ of index $2$, over fields of any characteristic $p\neq 2$. Admitting a cyclic subgroup of index $2$ might seem a strong condition, but actually there are many possible structures for such groups. Apart from the cyclic, abelian, dihedral, generalized quasi-dihedral cases, there are several others, and a complete list can be found for instance in \cite[Proposition 10.2]{Noe}.

Clearly,  if an automorphism group $H$ admits a cyclic subgroup of index $2$,
then $|H|\le 8\gg+4$. 
Over the complex field, it is a classical result that if $H$ is dihedral, then $|H|\le 4\gg+4$; see e.g. \cite{BUJ2}. Recently,  it has been shown (\cite[Theorem 6.1]{HMQ}) that if $H$ is a generalized quasi-dihedral group, then a tighter bound holds; also, Reyes-Carocca and Speziali \cite{Pietro} studied Riemann surfaces of genus $g$ having an automorphism group of size $6g$ admitting a cyclic subgroup of index $2$. 

The present paper addresses the following two questions:
\begin{itemize}
\item does the upper bound $4g(\cX)+4$ hold for dihedral automorphism groups of curves of genus $g(\cX)\ge 2$ defined over any algebraically closed field?

\item for a given $g\ge 2$, what are the curves of genus $g$ with a large automorphism group $H$ admitting a cyclic subgroup of index $2$? Can they be described explicitly?

\end{itemize}

\textcolor{black}{To extend the upper bound $4g(\mathcal{X})+4$ for dihedral groups from the complex case to arbitrary algebraically closed fields, one could initially appeal to Grothendieck's tame lifting theorem \cite[Exposé XIII]{SGA1}, which guarantees the bound holds provided the characteristic does not divide the order of the group. 
In Section \ref{dint}, we present a general proof demonstrating that the dihedral bound remains valid over any algebraically closed field of characteristic different from $2$; see Theorem \ref{nd}.}


Concerning the second question, taking the sharp dihedral bound $4g+4$, which is attained in every even genus, as a natural threshold, we focus on the case where $|H|$ is strictly greater than $4g+4$; see \cite[Corollary~2.6]{BUJ2}.

In Section \ref{dint} we deal with the non-tame case. More precisely, Theorem \ref{nt} shows that if $p\mid |H|$ and $|H|>4g(\mathcal X)+4$, then $\mathcal X$ is birationally equivalent to an Artin--Schreier curve
\[
y^p-y=x^m,
\]
and that this is the only exceptional family.

In the rest of the paper we assume that $p\nmid |H|$. 
Section \ref{sez4} shows that, up to birational equivalence, the curve has equation
\[
\mathcal X(N,1,s):\qquad y^N=x(1+x)^s,
\]
and that the group $H$ is generated by a cyclic automorphism of order $N$ and an involution satisfying an explicit relation; see Theorem \ref{abgr}. If we put $M:=N-2g(\mathcal X)$, then $M\mid N$, and the arithmetic conditions relating $N$, $M$, and $s$ give strong restrictions on the possible genera and on the possible values of $|H|$. Proposition \ref{espliciti} gives explicit examples whenever the congruence condition $N\mid s(s+2)$ is satisfied.

A first consequence of this analysis is the description of the tame spectrum above $4g(\mathcal X)+4$. 
Theorem \ref{Spectotale} proves that the possible orders $2N>4g(\mathcal X)+4$ are exactly
\[
4g(\mathcal X)\frac{k}{k-1},
\qquad
2\le k\le g(\mathcal X),
\quad k-1\mid 2g(\mathcal X),
\quad \gcd(2g(\mathcal X),k)\le 2.
\]

\textcolor{black}{It should be remarked that in the context of Riemann surfaces, the problem of establishing whether a cyclic group $G$ of size $N$ of automorphisms of a compact Riemann surface can be extended to a group of size $2N$ has been thoroughly investigated, for instance, in \cite{SING} and \cite{BUJ}. However, these classical results typically establish existence without providing explicit equations for the curves involved; in contrast, our approach explicitly constructs these algebraic models. From Theorem 4.1 in \cite{BUJ} it is not difficult to deduce that the statement of Theorem \ref{Spectotale} holds for the complex case. Furthermore, while the extension to algebraically closed fields of any characteristic coprime to $2N$ could alternatively be deduced via Grothendieck's tame lifting theorem \cite[Exposé XIII]{SGA1}, our proof is entirely self-contained and independent of such machinery.}
%

As a Corollary to Theorem \ref{Spectotale}, the largest values are
\[
8g,\quad 6g,\quad \frac{16}{3}g,\quad 5g,\quad \frac{24}{5}g,\quad \frac{32}{7}g,\ldots
\]
according to the arithmetic of the genus.
Section \ref{classi} is devoted to the top part of this spectrum. We prove that for every genus $g\ge 2$ with $p\nmid g$ there is a unique hyperelliptic curve admitting such a group of order $8g$; see Theorem \ref{uniqueh}. In the non-hyperelliptic case, Proposition \ref{maindi} gives the sharp upper bound $|H|\le 6g$. We then classify explicitly the families attaining the values $6g$, $\frac{16}{3}g$, $5g$, and $\frac{24}{5}g$.

The paper is organized as follows. Section \ref{background} collects the background material used later. Section \ref{dint} deals with groups whose order is divisible by the characteristic and contains, in particular, the extension of the dihedral bound and the classification of the exceptional non-tame family. Section \ref{sez4} treats the tame case and develops the structural results. Finally, Section \ref{classi} describes the spectrum of the possible orders above $4g(\mathcal X)+4$ and classifies the curves occurring at the top of the spectrum.

\section{Preliminaries}\label{background}

In this section, we review the known results on automorphism groups that are pertinent to the rest of the paper. Our notation and terminology are standard; see \cite[Chapter 11]{HKT} for details.
Let $\mathcal{X}$ denote a projective, geometrically irreducible, non-singular algebraic curve defined over an algebraically closed field $\mathbb{K}$ with characteristic $p$, and let $\aut(\mathcal{X})$ be its automorphism group.

For a subgroup $G$ of $\aut(\cX)$, let $\bar \cX$ be a non-singular model of $\K(\cX)^G$, that is,
a projective non-singular geometrically irreducible algebraic
curve with function field $\K(\cX)^G$, where $\K(\cX)^G$ consists of all elements of $\K(\cX)$
fixed by every element in $G$. Usually, $\bar \cX$ is called the
quotient curve of $\cX$ by $G$ and denoted by $\cX/G$. The field extension $\K(\cX)|\K(\cX)^G$ is Galois of degree $|G|$, and let $\Phi$ be the associated rational map $\cX\to \bar{\cX}$.

A point $P \in \mathcal{X}$ is called a ramification point of $G$ if the stabilizer $G_P$ of $P$ in $G$ is non-trivial; the ramification index $e_P$ at $P$ is given by $|G_P|$. A point $\bar{Q}\in\bar{\cX}$ is a branch point of $G$ if there is a ramification point $P\in \cX$ such that $\Phi(P)=\bar{Q}$. 

\begin{thm}\cite[Theorem 11.49]{HKT} \label{stabilizer}
If the characteristic of the ground field does not divide the size of $G_P$, then $G_P$ is a cyclic subgroup of $G$.
\end{thm}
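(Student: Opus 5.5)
The plan is to embed $G_P$ into the multiplicative group of the ground field by letting it act on a local parameter at $P$. Fix a local parameter $t$ at $P$, that is, an element of $\K(\cX)$ with $v_P(t)=1$. Every $\sigma\in G_P$ fixes $P$, so it preserves the valuation $v_P$. Hence $v_P(\sigma(t))=1$, and we can write $\sigma(t)=a_\sigma t+(\text{terms of order}\ge 2)$ with $a_\sigma\in\K^*$. This gives a map
\[
\chi:G_P\to\K^*,\qquad \sigma\mapsto a_\sigma .
\]

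The first step is to check that $\chi$ is a homomorphism. Read $\sigma(t)\equiv a_\sigma t \pmod{\mathfrak m_P^2}$ in the local ring at $P$. Since $\tau$ fixes $\K$ elementwise and preserves $\mathfrak m_P^2$, applying $\tau$ gives $\tau\sigma(t)\equiv a_\sigma a_\tau t$. Equivalently, $\chi$ records the action of $G_P$ on the one-dimensional cotangent space $\mathfrak m_P/\mathfrak m_P^2$. A standard fact then finishes the argument: every finite subgroup of $\K^*$ is cyclic. So it suffices to show that $\chi$ is injective.

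The main step, and the place where the hypothesis $p\nmid |G_P|$ enters, is injectivity. Suppose $\chi(\sigma)=1$ but $\sigma\neq 1$, and let $n$ be the order of $\sigma$, so $p\nmid n$. Since $\sigma\neq 1$ and $\K(\cX)$ is generated over the fixed field $\K(\cX)^{\langle\sigma\rangle}$, the element $\sigma$ cannot act trivially on the completion $\K[[t]]$, because the field embeds there. Hence we can write $\sigma(t)=t+ct^{k}+O(t^{k+1})$ with $c\neq 0$ and $k\ge 2$. Iterating, we get $\sigma^{j}(t)=t+jct^{k}+O(t^{k+1})$. In particular $\sigma^n(t)=t+nct^k+\cdots$. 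But $\sigma^n=1$, so $nc=0$, and since $n\ne 0$ in $\K$ this forces $c=0$, a contradiction. Therefore $\ker\chi$ is trivial.

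I expect the only delicate point to be justifying that a nontrivial automorphism fixing $P$ acts nontrivially on $\widehat{\mathcal O}_P=\K[[t]]$. This follows because $\K(\cX)\subset\K((t))$, so the action of $\sigma$ on $\K(\cX)$ is determined by its action on the completion. One could alternatively argue via the fact that $|G_P|$ equals the ramification index, which is tame, so the extension $\K((t))/\K((t))^{G_P}$ is a totally ramified tame extension and is therefore cyclic (Kummer). The power-series argument above is more direct, so that is the route I will take.
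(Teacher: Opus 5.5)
Your proof is correct and is essentially the standard argument behind the cited Theorem 11.49 of Hirschfeld--Korchm\'aros--Torres. There, the leading-coefficient character $\sigma\mapsto a_\sigma$, $G_P\to\K^*$, has kernel $G_P^{(1)}$; your computation $\sigma^j(t)=t+jct^k+O(t^{k+1})$ shows this kernel has no nontrivial element of order prime to $p$, so $G_P$ embeds in $\K^*$ and is cyclic. One step should be stated more precisely, namely $\sigma\neq 1\Rightarrow\sigma(t)\neq t$: what is needed is not that $\K(\cX)$ embeds in $\K((t))$, but that a continuous $\K$-automorphism of $\K((t))$ is determined by the image of $t$ (alternatively, $\sigma(t)=t$ would put an element of $P$-valuation $1$ in the fixed field of $\langle\sigma\rangle$, forcing the ramification index $|\langle\sigma\rangle|$ at the totally ramified point $P$ to be $1$).
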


The $G$-orbit of $P \in \mathcal{X}$ is the subset of $\mathcal{X}$
$$o = \{ R \mid R = g(P), \, g \in G \},$$
and it is called {\em long} if $|o| = |G|$; otherwise, $o$ is {\em short}. For a point $\bar{Q}$, the $G$-orbit $o$ lying over $\bar{Q}$ consists of all points $P \in \mathcal{X}$ such that $\Phi(P) = \bar{Q}$. If $P\in o$ then $|o|=|G|/|G_P|$ and hence $\bar{Q}$ is a branch point if and only if $o$ is a short $G$-orbit. Note that it is possible for $G$ to have no short orbits, which occurs if and only if every non-trivial element of $G$ is fixed-point-free on $\mathcal{X}$, meaning the cover $\Phi$ is unramified. On the other hand, $\mathcal{X}$ has a finite number of short $G$-orbits.

A subgroup of $\aut(\mathcal{X})$ is a $p'$-group (or a prime-to-$p$ group) if its order is prime to $p$. A subgroup $G$ of $\aut(\mathcal{X})$ is {\em tame} if the stabilizer of any point in $G$ is a $p'$-group. Otherwise, $G$ is {\em non-tame} (or {\em wild}). While every $p'$-subgroup of $\aut(\mathcal{X})$ is tame, the converse is not always true.

\begin{thm}\cite[Theorem 11.60]{HKT} \label{theorem11.60HKT}
If $G$ is a subgroup of $\aut(\mathcal{X})$ that fixes a point and $p \nmid |G|$, then
$$|G| \leq 4g(\mathcal{X}) + 2.$$
\end{thm}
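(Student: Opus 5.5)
The plan is the standard argument through the one-point stabilizer structure together with the Riemann--Hurwitz formula. Let $P$ be a point fixed by $G$. Because $p\nmid |G|$, Theorem \ref{stabilizer} says $G=G_P$ is cyclic, say $G=\langle\sigma\rangle$ of order $n$. The aim is to show $n\le 4g(\mathcal X)+2$. We may assume $n\ge 2$, and we put $\bar{\mathcal X}=\mathcal X/G$ with genus $\bar g$.

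\textbf{Step 1 (tame Riemann--Hurwitz).} Since $G$ is a $p'$-group, the Hurwitz genus formula has no wild terms:
\[
2g(\mathcal X)-2=n(2\bar g-2)+\sum_{i=1}^{r}\frac{n}{e_i}(e_i-1),
\]
where the $r$ short orbits have stabilizers of order $e_i\mid n$. The orbit of $P$ is short with $e_1=n$, so it contributes exactly $n-1$.

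\textbf{Step 2 (case $\bar g\ge 1$).} The right-hand side is then at least $n-1$. This gives $n\le 2g(\mathcal X)-1<4g(\mathcal X)+2$.

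\textbf{Step 3 (case $\bar g=0$).} Write $2g-2=-2n+(n-1)+\sum_{i\ge 2}(n-n/e_i)$. Each term with $i\ge 2$ satisfies $n-n/e_i\ge n/2$.
\begin{itemize}
\item If $r=1$, then $2g-2=-n-1<0$, which is impossible.
\item If $r=2$, then $2g-2=-n-1+n-n/e_2<0$, which is again impossible since $g\ge 2$. Geometrically this is the rational cover with two totally ramified points.
\item If $r=3$, we get $2g-2=n-1-n/e_2-n/e_3$.
\item If $r\ge 4$, we get $2g-2\ge -n-1+3n/2=n/2-1$, hence $n\le 4g-2$.
\end{itemize}

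\textbf{Step 4 (the main obstacle, $r=3$).} The hard point is that the crude bound only gives $e_2,e_3\ge 2$. With those values alone $2g-2\ge -1$, which is useless, so we need a finer constraint. Since $G$ is cyclic and generated by $\sigma$, the point is to use the structure of a cyclic cover of $\mathbb P^1$. Its function field is a Kummer extension $\mathbb K(x)(y)$ with $y^n=\prod (x-a_j)^{m_j}$, and the local exponents at the branch points must generate $\mathbb Z/n$ and sum to $0$ modulo $n$. Equivalently, the stabilizer generators $\tau_1,\tau_2,\tau_3$ satisfy $\tau_1\tau_2\tau_3=1$, where $\tau_1$ generates $G$.

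If $e_2=e_3=2$, then $\tau_2=\tau_3$ is the unique involution. Then $\tau_1=(\tau_2\tau_3)^{-1}=1$, a contradiction. Hence $\max(e_2,e_3)\ge 3$, so $n/e_2+n/e_3\le n/2+n/3=5n/6$.

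This yields $2g-2\ge n/6-1$, which is not yet sharp, so we refine. Since $\tau_2\tau_3=\tau_1^{-1}$ generates $G$ and $G$ is cyclic, the subgroups $\langle\tau_2\rangle,\langle\tau_3\rangle$ together generate $G$. Therefore $\mathrm{lcm}(e_2,e_3)=n$.

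Now write $n/e_2+n/e_3=n(e_2+e_3)/(e_2e_3)$. With $\mathrm{lcm}(e_2,e_3)=n$, the maximum of $n/e_2+n/e_3$ is $n/2+1$, attained at $e_2=2$, $e_3=n$ (or with $n/2$ odd). To check this, set $d=\gcd(e_2,e_3)$, so that $n/e_2+n/e_3=e_3/d+e_2/d$ with $e_2e_3/d=n$. The sum $a+b$ with $ab=n/d$ and $a,b\ge 1$ is largest when one factor is $1$. But $a=e_2/d=1$ means $e_2=d\mid e_3$, so $e_3=n$; then the sum is $1+n/e_2\le 1+n/2$.

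Hence $2g-2\ge n-1-(n/2+1)=n/2-2$, which gives $n\le 4g+2$. Equality occurs exactly when $e_2=2$, $e_3=n$, which corresponds to $y^2=x^{2g+1}-1$ with $n=4g+2$.
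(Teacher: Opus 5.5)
The paper gives no proof of this statement; it is quoted from HKT, Theorem 11.60. Your argument therefore has to stand on its own, and most of it does. The reduction to a cyclic $G=G_P$, the tame Hurwitz formula, the cases $\bar g\ge 1$ and $r\ne 3$, and the input $\mathrm{lcm}(e_2,e_3)=n$ for $r=3$ (justified by Kummer theory, since $p\nmid n$) are all sound.

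The optimization in Step 4 is wrong, however. Your claim $n/e_2+n/e_3\le n/2+1$ fails for $n\equiv 2\pmod 4$ and $(e_2,e_3)=(n/2,2)$. There $\mathrm{lcm}(e_2,e_3)=n$, but $n/e_2+n/e_3=n/2+2$. Your check misses this case because when $d=\gcd(e_2,e_3)=1$, neither $a=e_2$ nor $b=e_3$ can equal $1$. So ``the sum is largest when one factor is $1$'' tells you nothing there.

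Separately, $2g-2\ge n/2-2$ gives $n\le 4g$, not $n\le 4g+2$. If your inequality were true, it would contradict the example $y^2=x^{2g+1}-1$ with $n=4g+2$ that you cite yourself. Two errors cancel here, and your equality analysis is wrong as a result. The data $(e_2,e_3)=(2,n)$ give $n=4g$, realized by $y^2=x^{2g+1}-x$. The curve $y^2=x^{2g+1}-1$ has branch data $(n,n/2,2)$.

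The repair is short. Put $u=n/e_2$ and $w=n/e_3$. Since $\mathrm{lcm}(n/u,n/w)=n/\gcd(u,w)$, the condition $\mathrm{lcm}(e_2,e_3)=n$ says $\gcd(u,w)=1$, so $uw\mid n$; also $u,w\le n/2$.

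\begin{itemize}
\item If $\min(u,w)=1$, then $u+w\le 1+n/2$.
\item If $u,w\ge 2$, then $(u-2)(w-2)\ge 0$ gives $u+w\le uw/2+2\le n/2+2$.
\end{itemize}

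Either way $2g-2=n-1-u-w\ge n/2-3$, that is, $n\le 4g+2$. Equality holds exactly when $\{u,w\}=\{2,n/2\}$ with $n/2$ odd, i.e.\ $\{e_2,e_3\}=\{2,n/2\}$. This matches the paper's genus formula in Theorem \ref{recap}(i): there $\gcd(N,r)=1$ forces $\gcd(N,s)$ and $\gcd(N,r+s)$ to be coprime. With this correction your proof is complete.
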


\begin{thm}\cite[Theorem 11.79]{HKT} \label{theorem11.79}
If $G$ is an abelian subgroup of $\aut(\mathcal{X})$, then
$$|G| \leq \begin{cases}
4g(\mathcal{X}) + 4, & \text{if } p \neq 2, \\
4g(\mathcal{X}) + 2, & \text{if } p = 2.
\end{cases}$$
\end{thm}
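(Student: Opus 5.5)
This is the abelian bound, Theorem \ref{theorem11.79}, and I would prove it by applying the Riemann--Hurwitz formula to the cover $\cX\to\cX/G$. The key input is abelianness: for each point $P$, the whole orbit of $P$ has conjugate stabilizers, so all its points have the \emph{same} stabilizer $G_P$. Hence any element of $G_P$ fixes every point of the orbit of $P$. The plan runs as follows.

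\textbf{Tame case.} First I would reduce to the situation where $G$ has short orbits. If $G$ has no short orbits, Riemann--Hurwitz gives $2g-2=|G|(2\bar g-2)$ with $\bar g\ge 2$, since $\bar g\le 1$ would force $g\le 1$. So $|G|\le g-1$.

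Now suppose $G$ acts tamely with short orbits $o_1,\dots,o_r$ of stabilizer orders $e_1,\dots,e_r$. Then
$$2g-2=|G|\Big(2\bar g-2+\sum_{i}(1-1/e_i)\Big).$$
Each stabilizer is cyclic by Theorem \ref{stabilizer}. Each nontrivial element fixing a point fixes at most $2g+2$ points (the standard fixed-point bound for $p'$-automorphisms, from Riemann--Hurwitz applied to the cyclic group it generates). By the orbit remark, a generator of $G_{P_i}$ fixes all $|G|/e_i$ points of $o_i$.

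Case analysis of the Riemann--Hurwitz sum:
\begin{itemize}
\item If $\bar g\ge1$, or $\bar g=0$ with $r\ge5$, the bracket is at least $1/2$, giving $|G|\le 4g-4$.
\item If $\bar g=0$ and $r\le 2$, the quotient map is cyclic-like. Then $G$ is cyclic with totally fixed points, so $g=0$, contradicting $g\ge2$.
\item If $\bar g=0$ and $r=3$ or $r=4$, I use the abelian constraint: $e_3$ must divide $\mathrm{lcm}(e_1,e_2)$, and similarly for permutations. This comes from the fact that the product of generators of the inertia groups is trivial in the abelian quotient. Minimizing the bracket under these divisibility constraints gives the extremal signatures $(2,2,2,2)$-type, with bracket $1$, and $(2,2N,2N)$ or $(N,N,N)$-type small cases.
\end{itemize}
The worst case is $(2,2,2,e)$ with $e\ge3$, or more precisely $(2,2e,2e)$ versus $(2,2,2,2)$. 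Working this out, the minimum bracket compatible with the lcm condition yields $2g-2\ge|G|/2-2\cdot(\text{stuff})$, i.e.\ $|G|\le 4g+4$. The case $(2,2,k)$ is excluded since abelianness forces $k\mid 2$.

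\textbf{Wild case.} Here $p$ divides some $|G_P|$. I would use the Hilbert different formula, in which the contribution $d_P\ge 2(|G_P^{(1)}|-1)+(|G_P|-|G_P^{(1)}|)$ replaces $e_P-1$. In the abelian setting, $G_P$ is a direct product of a $p$-group with a cyclic $p'$-group, and the ramification filtration jumps occur at integers (Hasse--Arf). This gives a lower bound on $d_P$ large enough that the bracket is at least about $1/2$ again. The exception is a single short orbit with $\bar g=0$, which is the Artin--Schreier-type situation. There the bound is $|G|\le 4g+4$ for $p$ odd, but only $4g+2$ for $p=2$, coming from $y^2+y=x^{2g+1}$-type extremal curves. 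For $p=2$ the signature $(2,2,2,2)$ is no longer tame, which is what kills the $+4$.

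\textbf{Main obstacle.} The main obstacle is the wild case with one or two short orbits and $\bar g=0$. I would try to handle it by passing to the normal Sylow $p$-subgroup $S$ (normal because $G$ is abelian). I would then apply the tame argument to the action of $G/S$ on $\cX/S$, combined with the Deuring--Shafarevich formula when $\cX/S$ is rational, to control $|S|$ in terms of $g$.
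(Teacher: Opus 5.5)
The paper does not prove this statement; it is quoted from \cite[Theorem 11.79]{HKT}. Your argument therefore has to stand on its own, and as written it has two genuine gaps.

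The first gap is in the tame case with $\bar g=0$ and $r\in\{3,4\}$. Minimizing the bracket $B=-2+\sum_i(1-1/e_i)$ under your lcm conditions cannot give $|G|\le 4g+4$. Since $2g-2=|G|B$, the target is equivalent to $|G|(1-2B)\le 8$, so whenever $B<1/2$ you need an \emph{upper bound on $|G|$ in terms of the $e_i$}, not a lower bound on $B$. For instance, the signature $(2,6,6)$ satisfies all your divisibility conditions and has $B=1/6$. The bracket alone then only yields $|G|=12(g-1)$, e.g.\ $|G|=24$ with $g=3$, which violates the bound.

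The missing ingredient is that $G$ is generated by the stabilizers of any $r-1$ of its short orbits. If $K$ is the subgroup they generate, then $G/K$ acts on $\cX/K$ with quotient $\mathbb{P}^1$ and at most one (tame) short orbit, and Riemann--Hurwitz forces $G=K$. This gives $|G|\le\prod_{i\ne j}e_i$ for every $j$. It also gives your lcm conditions as a by-product, without invoking the tame fundamental group (which in characteristic $p$ is Grothendieck's theorem). With $e_1\le\dots\le e_r$ and $|G|\le e_1\cdots e_{r-1}$, the inequality $|G|(1-2B)\le 8$ becomes a short check. Equality occurs e.g.\ for $(2,2e,2e)$, with $G\cong\mathbb{Z}_2\times\mathbb{Z}_{2e}$ acting on $y^2=x^{2e}-1$ of genus $e-1$. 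Your description of the extremal cases is also wrong: $(2,2,2,2)$ has bracket $0$ (it forces $g=1$), not $1$, and $(2,2,2,e)$ with $e\ge 3$ is excluded by your own lcm condition.

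The second gap is that the wild case is not proved. The assertion that Hasse--Arf makes the bracket ``at least about $1/2$'' comes with no estimate. The configuration you yourself call the main obstacle, $\bar g=0$ with few short orbits, is left as a plan. That is exactly where $y^p-y=x^m$ lives, and, for $p=2$, the extremal curves $y^2+y=x^{2g+1}$ with a cyclic group of order $4g+2$.

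The concrete tool needed there is this: since $G_P$ is abelian, its $p'$-part $C$ centralizes $G_P^{(1)}$. So every lower ramification jump $i\ge 1$ is divisible by $|C|$, and hence $d_P\ge |G_P|-1+|C|(|G_P^{(1)}|-1)$. You would still have to carry out the short-orbit analysis with this estimate to obtain $4g+4$ for $p$ odd and $4g+2$ for $p=2$.

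Your explanation of the characteristic-$2$ bound via $(2,2,2,2)$ does not hold up either. The tame extremal signatures are $(2,2e,2e)$ and $(4,4,4)$. The real content of the $p=2$ statement is excluding $|G|\in\{4g+3,4g+4\}$ once wild ramification is allowed. As it stands, your argument settles only the cases $\bar g\ge 1$, and $\bar g=0$ with $r\ge 5$ or (tame) $r\le 2$.
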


Provided that $G$ is a $p'$-group, the Hurwitz genus formula applied to $G$ is given by:
\begin{equation}
\label{Hurwitz}
2g(\mathcal{X}) - 2 = |G|(2g(\bar{\mathcal{X}}) - 2) + \sum_{P \in \mathcal{X}} d_P,
\end{equation}
where
\begin{equation}
\label{differente}
d_P = \sum_{i \geq 0} (|G_P^{(i)}| - 1).
\end{equation}

If a curve has genus $0$ it is called rational and its automorphism group is isomorphic to $\mathrm{PGL}(2, \mathbb{K})$.
The following result is a consequence of Dickson's classification of finite subgroups of $\mathrm{PGL}(2, \mathbb{K})$ (see \cite[Theorem 11.91]{HKT}).
\begin{prop}
\label{dickson}
Any non-trivial tame automorphism of a rational curve that fixes a point has exactly two fixed points.
\end{prop}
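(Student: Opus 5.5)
The plan is to work in explicit coordinates. Since $g(\mathcal X)=0$, we may identify $\mathcal X$ with the projective line, so that $\K(\mathcal X)=\K(x)$ and $\aut(\mathcal X)\cong \mathrm{PGL}(2,\K)$ acts by fractional linear transformations $x\mapsto (ax+b)/(cx+d)$. Let $\sigma$ be a non-trivial tame automorphism fixing a point $P$. As in the rest of the paper, automorphisms are taken inside finite groups, so $\sigma$ has finite order $n>1$.

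First I show that $p\nmid n$. Every power of $\sigma$ fixes $P$, so $\langle\sigma\rangle$ is contained in its own stabilizer of $P$. Tameness says this stabilizer is a $p'$-group, hence $p\nmid n$. (If $p=0$ this condition is empty.)

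Next I normalize. Apply a projective change of coordinate sending $P$ to the point at infinity $P_\infty$. An element of $\mathrm{PGL}(2,\K)$ fixing $P_\infty$ has $c=0$, so after rescaling $\sigma(x)=ax+b$ with $a\neq 0$. I then split into two cases.

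\begin{itemize}
\item If $a=1$, then $b\neq0$ because $\sigma$ is non-trivial. We get $\sigma^k(x)=x+kb$. In characteristic $0$ this has infinite order, which is excluded. In characteristic $p>0$ it has order exactly $p$, which contradicts $p\nmid n$.
\item Hence $a\neq 1$. The finite fixed points satisfy $ax+b=x$, which has the unique solution $x_0=b/(1-a)$. Together with $P_\infty$, $\sigma$ has exactly two fixed points.
\end{itemize}

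Equivalently, conjugating by $x\mapsto x-x_0$ turns $\sigma$ into $x\mapsto ax$ with $a$ a primitive $n$-th root of unity. Its fixed points are exactly $0$ and $\infty$.

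The only delicate point is excluding the translation case, i.e.\ parabolic elements. This is precisely where tameness enters, together with the fact that $p$-elements of $\mathrm{PGL}(2,\K)$ are exactly the unipotent ones. This agrees with Dickson's classification cited in \cite[Theorem 11.91]{HKT}. Everything else is direct computation.
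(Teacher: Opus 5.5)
Your proof is correct, but it takes a different route from the paper. The paper gives no argument of its own: it reads the statement off Dickson's classification of finite subgroups of $\mathrm{PGL}(2,\K)$ (\cite[Theorem 11.91]{HKT}). You instead argue directly.
\begin{itemize}
\item You move the fixed point to $P_\infty$, which reduces $\sigma$ to an affine map $x\mapsto ax+b$.
\item Tameness applies to $\langle\sigma\rangle$, which is its own stabilizer of $P$, so $p\nmid n$. This rules out the translation case $a=1$, because a non-trivial translation has order $p$ or infinite order.
\item For $a\neq 1$ the fixed points are visibly $P_\infty$ and $b/(1-a)$.
\end{itemize}
The citation is shorter and places the fact inside the full classification, which the paper also relies on elsewhere, e.g.\ to identify $H/L$ as dihedral in Lemma \ref{24stab}. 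Your computation is self-contained and uses only $\aut(\mathbb{P}^1)\cong\mathrm{PGL}(2,\K)$. It also makes explicit the finiteness hypothesis that the statement leaves implicit. That hypothesis is genuinely needed: in characteristic $0$, the translation $x\mapsto x+1$ is a non-trivial automorphism with a single fixed point, and it would count as vacuously tame if infinite order were allowed.
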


\begin{prop}\cite{DGT}\label{n=1}
Let $\mathcal{X}$ be an algebraic curve of genus $g\geq 2$ defined over a field of characteristic $p$. If $G$ is a cyclic automorphism group of $\mathcal{X}$ of order coprime with $p$ then $G$ cannot act with exactly one short orbit.
\end{prop}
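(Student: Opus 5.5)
The plan is to argue via Kummer theory over the quotient curve, which works uniformly in every characteristic $p\nmid N$. Let $G=\langle\sigma\rangle$ have order $N$ with $p\nmid N$, and let $\bar{\mathcal X}=\mathcal X/G$ with function field $F=\K(\mathcal X)^G$. Since $\K$ is algebraically closed and $p\nmid N$, $\K$ contains a primitive $N$-th root of unity $\zeta$. The extension $\K(\mathcal X)|F$ is therefore a cyclic Kummer extension of degree $N$, and I will write $\K(\mathcal X)=F(z)$ with $z^N=f$ for some $f\in F$.

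To produce $z$ and $f$, I diagonalise the action of $\sigma$: choose $z$ with $\sigma(z)=\zeta z$, for instance a suitable Lagrange resolvent of a primitive element. Then $f=z^N$ is $G$-invariant, so $f\in F$, and $F(z)$ has degree $N$ over $F$ because $\zeta$ is primitive.

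The key local fact is the standard Kummer ramification formula, valid since $p\nmid N$. For a point $\bar R\in\bar{\mathcal X}$, the ramification index of $\Phi$ over $\bar R$ is
\[
e_{\bar R}=\frac{N}{\gcd\bigl(N,v_{\bar R}(f)\bigr)} .
\]
I would verify this briefly by working in the completion at $\bar R$, writing $f=t^{v}u$ with $t$ a local parameter and $u$ a unit. Because $p\nmid N$, Hensel's lemma makes $u$ an $N$-th power, which reduces the local extension to $t^{v/d}$-type Kummer extensions with $d=\gcd(N,v)$.

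Now suppose, for a contradiction, that $G$ has exactly one short orbit, lying over a single branch point $\bar Q$. For every $\bar R\ne\bar Q$ we have $e_{\bar R}=1$, so $N\mid v_{\bar R}(f)$. Since $f$ is a nonzero function on the projective curve $\bar{\mathcal X}$, its divisor has degree $0$:
\[
v_{\bar Q}(f)=-\sum_{\bar R\neq \bar Q} v_{\bar R}(f)\equiv 0 \pmod N .
\]
The formula then gives $e_{\bar Q}=1$, contradicting that $\bar Q$ is a branch point. This completes the argument, and the hypothesis $g\ge 2$ is used only to ensure that $G$ is a finite group of automorphisms.

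The main point needing care is the local ramification formula in positive characteristic, where $p\nmid N$ is essential both for the existence of $\zeta$ and for the Hensel step. I would either cite the standard Kummer theory statement (e.g. Stichtenoth, Prop.~3.7.3) or give the short local computation sketched above. Choosing $z$ as a $\sigma$-eigenvector, so that the generator has the correct cyclic Galois action, is routine.
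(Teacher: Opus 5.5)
Your proof is correct. The paper gives no argument for this proposition: it is quoted from the authors' earlier paper [DGT] and listed among the preliminaries, so there is no in-paper proof to compare with. Your Kummer-theoretic argument is a complete, self-contained justification.

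Its structure is as follows. You write $\K(\mathcal X)=F(z)$ with $\sigma(z)=\zeta z$ and $f=z^N\in F$, and use the ramification formula $e_{\bar R}=N/\gcd(N,v_{\bar R}(f))$. Then $\deg\,\div(f)=0$ forces $N\mid v_{\bar Q}(f)$ at the only candidate branch point, so $e_{\bar Q}=1$. This is the algebraic counterpart of a topological fact: in any abelian quotient of the fundamental group of a once-punctured surface, the loop around the puncture is a product of commutators and so becomes trivial.

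The argument also proves slightly more than is stated. It uses neither $g\ge 2$ nor any information about $\bar{\mathcal X}$, so it shows that no tame cyclic cover of a complete curve is branched over exactly one point.

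Two points of precision.
\begin{enumerate}
\item A Lagrange resolvent of a primitive element can vanish. For $N\ge 3$, the element $\theta=z^2+z^3$ is primitive, yet $\sum_i\zeta^{-i}\sigma^i(\theta)=0$. So you should take any element whose $\zeta$-resolvent is nonzero, which exists by Artin's independence of characters. Alternatively, apply Hilbert's Theorem 90 to $\zeta^{-1}$ directly.
\item If you cite Stichtenoth's Proposition 3.7.3, note its hypothesis that $f$ is not a $d$-th power in $F$ for any $1<d\mid N$. This follows from $[F(z):F]=N$.
\end{enumerate}
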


\section{Groups of order divisible by $p$}\label{dint}

Throughout the paper,  $N$ is a positive integer 
and $H$ is a subgroup of the automorphism group of a curve $\cX$ of genus $g=g(\cX)$, admitting a cyclic subgroup $G$ of order $N$ and index $2$ in $H$. 
In the sequel, $\rho$ is a generator of $G$ and $\sigma$ is an element in $H\setminus G$. In this Section, we deal with the case where $N$ is a multiple of the characteristic $p\geq 3$.

The following statement can be deduced from the main result in \cite{DGT}.
\begin{prop}\label{MAINvecchio}
Let $p \geq 3$, and let $\mathcal{X}$ be a curve of genus $g(\mathcal{X}) \geq 2$ defined over a field of characteristic $p$ with a cyclic automorphism group $G$ of order $N > 2g(\mathcal{X}) + 2$ and divisible by $p$. Then, up to birational equivalence, $N = pm$ with $m > 1$ and $(p,m)=1$; also, 
  $g(\mathcal{X}) = \frac{(p-1)(m-1)}{2}$,
\begin{equation}\label{ntame}
\mathcal{X}: y^p - y = a(x^m - b)
\end{equation}
for some $a, b \in \mathbb{K}$, and $G=\langle \rho \rangle $ with
$$
\rho(x)=\lambda x, \quad \rho(y)=y+1.
$$
with $\lambda$ a primitive $m$-th root of unity.
\end{prop}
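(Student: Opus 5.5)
The statement is said to be deducible from the main result of \cite{DGT}, so I will reduce to that classification rather than reprove it from scratch. Write $N=p^k m$ with $(p,m)=1$. The first step is to show $k=1$ and to locate the wild ramification. The plan is to apply the Hurwitz formula to the $p$-part $S=\langle\rho^m\rangle$ of $G$, using the Deuring--Shafarevich formula or the differential exponent \eqref{differente} with higher ramification groups. Since $N>2g+2$, Theorem \ref{theorem11.60HKT} (applied to the prime-to-$p$ part) combined with a count of short orbits shows that $G$ has very few short orbits. I expect exactly one orbit to be wildly ramified, namely a single point $P_\infty$ fixed by all of $G$, together with possibly one further tame fixed point of $\langle\rho^p\rangle$.

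The second step is to determine the quotient by $S$. Using $N>2g+2$ and Hurwitz for $G$, I will show that $\mathcal{X}/G$ is rational. I then expect $\mathcal{X}/S$ to be rational as well, with the cyclic group $G/S$ of order $m$ acting on it. By Proposition \ref{dickson}, this action fixes exactly two points, which may be normalized to $x=0$ and $x=\infty$, so that $\rho$ induces $x\mapsto\lambda x$. The subgroup of order $p$ in $S$ gives an Artin--Schreier extension $y^p-y=f(x)$, ramified only over the image of $P_\infty$. Hence $f$ can be taken to be a polynomial in $x$, and $g(\mathcal{X})=(p-1)(\deg f-1)/2$ when $k=1$.

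The third step uses $G$-invariance. The relation $\rho(y)=y+c$ together with $\rho(x)=\lambda x$ forces $f(\lambda x)-f(x)\in\{t^p-t\}$, so after an Artin--Schreier change of variable $f(\lambda x)=f(x)$. This means $f$ is a polynomial in $x^m$. The genus bound $N>2g+2$, that is $pm>(p-1)(\deg f-1)+2$, then forces $\deg f=m$. So $f=a(x^m-b)$, and rescaling $y$ gives $\rho(y)=y+1$.

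The main obstacle is excluding $k\ge2$, and more generally controlling the higher ramification groups at $P_\infty$. For this I would first consider the subgroup of order $p^2$ acting on $\mathcal{X}/\langle\text{order }p\rangle$. The aim is to compare the Hurwitz contribution of the ramification filtration at a point with stabilizer of order $p^2$, via the Hasse--Arf theorem, against the inequality $N>2g+2$, and derive a contradiction. Since this is precisely the content of \cite{DGT}, in the written proof I would cite it there and present only the reduction outlined above.
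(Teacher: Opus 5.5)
Your proposal matches the paper's treatment: the paper gives no independent argument and simply deduces the statement from the main classification theorem of \cite{DGT}, which is exactly where you place the substantive work. That work is ruling out $p^2\mid N$ and controlling the ramification, hence also getting $\mathcal X/S$ rational with a single wild fixed point, so your Steps 2--3 are an unpacking of how the normal form arises. One small precision in Step 3: a priori only $\rho(y)=y+h(x)$ with $h\in\mathbb K(x)$, and it is after reducing $f$ to have no monomials of degree divisible by $p$ that $h^p-h=f(\lambda x)-f(x)$ forces $h=c\in\mathbb F_p^*$, which gives both $f\in\mathbb K[x^m]$ and the legitimacy of the final rescaling $y\mapsto y/c$.
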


\begin{prop}\label{Equi}
Up to birational equivalence, $a=1$ and $b=0$ can be assumed in \ref{ntame}. 
\end{prop}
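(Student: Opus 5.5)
We normalize the curve $y^p-y=a(x^m-b)$ of Proposition \ref{MAINvecchio} by explicit changes of the coordinates $x$ and $y$ that preserve the Artin--Schreier form. Throughout, $a\neq 0$: if $a=0$ the equation $y^p-y=0$ splits into $p$ lines and is not geometrically irreducible.

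\emph{Removing $b$.} Since $\K$ is algebraically closed, choose $c\in\K$ with $c^p-c=ab$; this is possible because $T^p-T-ab$ has a root in $\K$. Set $y'=y+c$. Additivity of $y\mapsto y^p-y$ in characteristic $p$ gives
\[
y'^p-y'=y^p-y+(c^p-c)=ax^m-ab+ab=ax^m .
\]
The field $\K(x,y)=\K(x,y')$ is unchanged, so this is an isomorphism of function fields. The automorphism $\rho$ keeps its form, since $\rho(y')=\rho(y)+c=y'+1$.

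\emph{Normalizing $a$.} Choose $\mu\in\K$ with $\mu^m=a$, which exists since $\K$ is algebraically closed and $a\neq 0$. Setting $x'=\mu x$ gives $y'^p-y'=x'^m$. Again $\K(x,y')=\K(x',y')$, and $\rho(x')=\mu\lambda x=\lambda x'$.

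Hence, up to birational equivalence, the curve is $y^p-y=x^m$ and $\rho$ is still $x\mapsto\lambda x$, $y\mapsto y+1$. This is the statement with $a=1$ and $b=0$. Both steps are routine; the only point needing care is the additivity of the Artin--Schreier operator, which is what lets the constant be absorbed into $y$.
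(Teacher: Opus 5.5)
Your proof is correct and is essentially the paper's argument: the paper also takes $\alpha^m=a$ and a root $\beta$ of $T^p-T=-ab$, then substitutes $x=x'/\alpha$, $y=y'+\beta$; your $\mu$ is their $\alpha$ and your $c$ is $-\beta$. Your extra checks that $a\neq 0$ and that $\rho$ keeps the form $x\mapsto\lambda x$, $y\mapsto y+1$ are harmless additions the paper leaves implicit; the latter is what Theorem \ref{29set} later relies on.
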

\begin{proof}
Let $\alpha$ and $\beta$ be elements in $\mathbb K$ such that $\alpha^m=a$ and $\beta^p-\beta=-ab$.
Also let $x=\frac{1}{\alpha} x'$ and $y=y'+\beta$. Then
$(y')^p+\beta^p-y'-\beta=a\frac{1}{\alpha^m}(x')^m-ab$, whence
$(y')^p-y'=(x')^m$.
\end{proof}

We first prove the following preliminary result.

\begin{thm}\label{29set} Let $\cX$ be a curve of genus $g(\cX)\ge 2$ defined over a field of positive characteristic $p\ne 2$, and $G$ be a cyclic subgroup of $\aut(\cX)$ of order $N>2g(\cX)+2$. Assume $p\mid N$ and that there exists an involution $\sigma\in  \aut(\cX)\setminus G$ that normalizes $G$. Then, up to birational equivalence, $\cX: y^p-y=x^m$, $G$ is as in Proposition \ref{MAINvecchio}, $m$ is odd, and 
$$
\sigma(x)=-x, \quad \sigma(y)=-y+i,
$$
with $i \in \mathbb{F}_p$. In particular, there are at most $p$ involutions in $\aut(\cX)\setminus G$ that normalize $G$.
\end{thm}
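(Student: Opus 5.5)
By Proposition \ref{MAINvecchio} and Proposition \ref{Equi}, we may assume $\cX: y^p-y=x^m$ with $N=pm$, $(p,m)=1$, $m>1$, and $G=\langle\rho\rangle$, where $\rho(x)=\lambda x$ and $\rho(y)=y+1$. The plan is to locate the subfields $\K(x)$ and $\K(y)$ intrinsically in terms of $G$, so that $\sigma$ must preserve them, and then to read off its action.

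Since $\sigma$ normalizes $G$, it normalizes every characteristic subgroup of the cyclic group $G$. In particular it normalizes the Sylow $p$-subgroup $P=\langle\rho^m\rangle$, with $\rho^m(x)=x$ and $\rho^m(y)=y+m$. It also normalizes the subgroup $C=\langle\rho^p\rangle$ of order $m$, with $\rho^p(x)=\lambda^p x$ and $\rho^p(y)=y+p=y$.

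The fixed field of $P$ has degree $p$ below $\K(x,y)$ and contains $x$, so $\K(\cX)^P=\K(x)$. Similarly $\K(\cX)^C=\K(y)$, since $y$ is fixed by $C$ and $[\K(x,y):\K(y)]=m$. Hence $\sigma$ preserves both $\K(x)$ and $\K(y)$, and so it induces Möbius transformations $x\mapsto\sigma(x)$ and $y\mapsto\sigma(y)$.

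To pin these down, I would use the ramification data.
\begin{itemize}
\item The point $P_\infty$ is the unique point fixed by $P$, since $P$ is totally ramified only over $x=\infty$ in the Artin--Schreier extension. As $\sigma$ normalizes $P$, it fixes $P_\infty$, so $\sigma(x)$ has its only pole at the image of $P_\infty$, giving $\sigma(x)=\alpha x+\beta$.
\item For $\K(y)\supset\K(y^p-y)=\K(x^m)$: the extension $\K(x)/\K(x^m)$ is cyclic with fixed points $x=0,\infty$, and the $C$-orbits of $\rho^p$ are preserved by $\sigma$. So $\sigma$ permutes the fixed points $\{0,\infty\}$ of $C$ on $\K(x)$. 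Since $\infty$ is fixed, $0$ is fixed too, and $\beta=0$.
\item Similarly, $\sigma(y)$ has its poles only at $P_\infty$, so $\sigma(y)=\gamma y+\delta$.
\end{itemize}

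Substituting into the equation gives
\[
\gamma^p y^p+\delta^p-\gamma y-\delta=\alpha^m x^m=\alpha^m(y^p-y).
\]
Comparing coefficients yields $\gamma^p=\gamma=\alpha^m$ and $\delta^p=\delta$. Hence $\gamma\in\mathbb F_p^*$ and $\delta\in\mathbb F_p$.

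Now impose $\sigma^2=1$, which gives $\alpha^2=1$ and $\gamma^2=1$, together with $\sigma\notin G$.
\begin{itemize}
\item If $\gamma=1$, then $\alpha^m=1$, and with $\alpha=\pm1$ the map $\sigma$ would be $x\mapsto\pm x$, $y\mapsto y+\delta$. When $\alpha^m=1$, this lies in $G$: choose the exponent modulo $p$ and modulo $m$ by CRT. That contradicts $\sigma\notin G$, and the same computation disposes of $\alpha=1$.
\item Hence $\gamma=-1$, so $\alpha^m=-1$. This forces $\alpha=-1$ with $m$ odd, and then $\sigma(y)=-y+i$ with $i=\delta\in\mathbb F_p$.
\end{itemize}
Checking that these maps indeed normalize $G$ is immediate, since $\sigma\rho\sigma=\rho^{-1}$. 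The count of at most $p$ such involutions follows from the $p$ choices of $i$.

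The main obstacle is the second step: justifying that $\sigma$ preserves $\K(x)$ and $\K(y)$ and fixes the right points. I expect the fixed-field argument via characteristic subgroups to handle this cleanly. The one genuinely delicate point is that $P_\infty$ is the only fixed point of $P$, which follows from the fact that the Artin--Schreier cover $y^p-y=x^m$ is branched only at infinity.
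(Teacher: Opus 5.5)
Your proof is correct, but it takes a different route from the paper's.

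\textbf{How the two routes differ.} The paper works on the quotient $\cX/G$, whose function field is $\K(x^m)$. Since $\sigma$ permutes the two short $G$-orbits $\{P_\infty\}$ and $\Delta$ (the zeros of $x$), the coset $\sigma G$ must be the unique involution of $\mathbb P^1$ fixing $0$ and $\infty$. It acts non-trivially there because $\sigma\notin G$. Hence $\sigma(x^m)=-x^m$, so $\sigma(x)=vx$ with $v^m=-1$, and $\sigma^2=1$ forces $v=-1$ and $m$ odd. The curve equation then gives $(\sigma(y)+y)^p=\sigma(y)+y$.

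You instead split $G$ into its Sylow $p$-subgroup $P$ and its $p'$-part $C$. You show that $\sigma$ stabilises $\K(x)=\K(\cX)^P$ and $\K(y)=\K(\cX)^C$ separately. You get the affine forms of $\sigma(x)$ and $\sigma(y)$ from the fact that $P_\infty$ is the only fixed point of $P$, and then compare coefficients. The hypothesis $\sigma\notin G$ enters explicitly through your CRT argument excluding $\gamma=1$, rather than through the faithful action of $H/G$ on the quotient.

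\textbf{What each buys.} The paper's route is shorter. Yours has a bonus: nothing before you impose $\sigma^2=1$ uses that $\sigma$ is an involution. So it shows that every element normalizing $G$ has the form $x\mapsto\alpha x$, $y\mapsto\alpha^m y+\delta$ with $\alpha^m\in\mathbb F_p^*$ and $\delta\in\mathbb F_p$. Since each such map is easily seen to normalize $G$, this pins down the whole normalizer of $G$ in $\aut(\cX)$.

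\textbf{Wording of $\beta=0$.} Your justification is loosely worded. The clean version: the restriction of $\sigma$ to $\K(x)$ normalizes the group $x\mapsto\mu x$, $\mu^m=1$, induced by $G$. It therefore permutes that group's two fixed points $0$ and $\infty$, and since it fixes $\infty$ it fixes $0$.

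\textbf{A false side remark.} One remark in your write-up is false, although the statement does not need it. The maps $\sigma(x)=-x$, $\sigma(y)=-y+i$ do normalize $G$, but $\sigma\rho\sigma\neq\rho^{-1}$. A direct computation gives $\sigma\rho\sigma(x)=\lambda x$ and $\sigma\rho\sigma(y)=y-1$. Hence $\sigma\rho\sigma=\rho^u$ with $u\equiv 1\pmod m$ and $u\equiv -1\pmod p$, which differs from $\rho^{-1}$ because $m\ge 3$.

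In fact, if $\sigma\rho\sigma$ were $\rho^{-1}$, then $\langle\rho,\sigma\rangle$ would be dihedral of order $2pm>4g(\cX)+4$, contradicting Theorem \ref{nd}. The theorem only asserts that any such involution must have the stated form. So you can simply drop this verification, or replace it with the correct exponent $u$.
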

\begin{proof} 
As $p\ge 3$ divides the size of a cyclic group $G$ of automorphisms of size larger than $2\gg+2$, by Propositions \ref{MAINvecchio} and \ref{Equi} the only possibility is that $\cX$ is birationally equivalent to the curve with equation $y^p-y=x^m$ and that $G$ is as in the statement of Proposition \ref{MAINvecchio}.

The group $G$ fixes the common pole $P_\infty$ of $x$ and $y$ and acts with just another short orbit $\Delta$, consisting of the $p$ zeros of $x$.

As $\sigma$ normalizes $G$, it acts on the short orbits of $G$. As $|\Delta|=p>1$, we have that $\sigma$ preserves both the point $P_\infty$ and the set $\Delta$. Let $H=\langle G,\sigma \rangle$. Then the factor group $H/G$ acts on the quotient curve $\mathcal X/G$.  Then $\sigma G\in H/G$ fixes the points $\bar P_\infty$ and $\bar P_0$ in $\mathcal X/G$ lying under $P_\infty$ and $\Delta$, respectively. Note that the fixed field of $G$ is $\mathbb K(x^m)$; also, $\bar P_\infty$ is the only pole of $t=x^m$ in $\mathcal X/G$ and $\bar P_0$ is the only zero of  $t$ in $\mathcal X/G$. As there is just one involution of the projective line $\mathcal X/G$ fixing $\bar P_0$ and $\bar P_{\infty}$, we have $(\sigma G)(t)=-t$. This implies $\sigma(x^m)=-x^m$ and hence $(\sigma(x)/x)^m=-1$. Then $\sigma(x)=vx$ for some $v$ with $v^m=-1$. As $\sigma$ is an involution, we have $v^2=1$. This can only happen when $m$ is odd and $v=-1$. Also, from $\sigma(y)^p-\sigma(y)=\sigma(x)^m$ it follows that 
$$
0=\sigma(y)^p+y^p-\sigma(y)-y=(\sigma(y)+y)^p-(\sigma(y)+y),
$$
that is $\sigma(y)=-y+i$ for some $i\in \{0,1,\ldots,p-1\}$.
\end{proof}

As a corollary of Theorem \ref{29set}, we obtain that the bound $4g+4$ for dihedral automorphism groups holds also for orders divided by the characteristic $p\ge 3$.

\begin{thm}\label{nd} Let $\cX$ be a curve of genus $g(\cX)\ge 2$ defined over a field of positive characteristic $p\ne 2$. Let $D$ be a dihedral group of automorphisms of $\cX$.
Then 
$$
|D|\le 4g(\cX)+4.
$$
\end{thm}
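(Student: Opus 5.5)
We may assume $|D|=2N$ with $N\ge 3$, since dihedral groups of order at most $4$ are abelian and Theorem \ref{theorem11.79} already gives $|D|\le 4g(\cX)+4$. Then $D$ has a unique cyclic subgroup $G=\langle\rho\rangle$ of order $N$ and index $2$. The $N$ elements of $D\setminus G$ are exactly the involutions $\rho^j\sigma$, and each of them normalizes $G$. I argue by contradiction, assuming $2N>4g(\cX)+4$, that is, $N>2g(\cX)+2$. Since $p\ne 2$, we have $p\mid |D|$ if and only if $p\mid N$, and this splits the proof into a wild case and a tame case.

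\emph{Wild case $p\mid N$.} Here Theorem \ref{29set} applies directly, since $G$ is cyclic of order $N>2g(\cX)+2$ and every $\rho^j\sigma$ is an involution outside $G$ normalizing $G$. It gives $N=pm$ with $m>1$, and it says at most $p$ involutions of $\aut(\cX)\setminus G$ normalize $G$. But $D\setminus G$ already contains $N=pm>p$ such involutions, which is a contradiction.

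\emph{Tame case $p\nmid 2N$.} I apply the Hurwitz formula \eqref{Hurwitz} to $D$, with $h=g(\cX/D)$. If $h\ge1$, or if there are at most two branch points, then $2N>4g+4$ is impossible (and at most two branch points would force $g=0$). So $\cX/D$ is rational with $r\ge 3$ branch points. By Theorem \ref{stabilizer} the stabilizers are cyclic, so the branch indices are either $2$ (stabilizer generated by a reflection, or by $\rho^{N/2}$) or a divisor $m_i>1$ of $N$ (stabilizer inside $G$).

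The key structural input replaces Riemann's existence theorem. Let $S$ be the subgroup generated by all stabilizers; it is normal, and $D/S$ acts on the rational curve $\cX/S$ with no fixed points. By Proposition \ref{dickson} any nontrivial tame automorphism of a rational curve has fixed points, so $S=D$. Choosing generators $c_i$ of the stabilizers over the branch points, suitably conjugated, one also gets a product relation $c_1\cdots c_r=1$ via the cyclic cover $\cX/G\to\cX/D$. A product of reflections is a rotation only when the number of reflections is even, and generating $D$ requires at least one reflection (in fact two). Hence the number of branch points with reflection stabilizers is even and positive.

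It remains to run through the signatures. With $r\ge5$, or $r=4$ with not all indices equal to $2$, the formula already gives $g-1\ge N/2$, so $N\le 2g-2$. The signature $(2,2,2,2)$ gives $g=1$. With $r=3$, the signature is $(2,2,m)$ with $m=N$ (the rotation part must generate $G$ together with the reflection product), which gives $g=0$. The remaining case is $(2,2,2,m)$ with exactly two reflections and one rotation $\rho^{N/2}$. Then $g-1=N/2-N/m$ and the generation condition forces $m\ge N/2$, so that $4g+4=2N+8-4N/m\ge 2N$. In every case $2N\le 4g+4$, the desired contradiction. I expect this signature bookkeeping, in particular justifying the product relation and the generation constraint without topology, to be the main obstacle. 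A fallback is to use the cyclic subgroup $G$ itself: by the classification in \cite{DGT} of cyclic groups with $N>2g+2$, the quotient $\cX/G$ is rational with three short orbits. The involution $\sigma$ then induces an involution of $\cX/G$ permuting these orbits, and one checks directly, as in the proof of Theorem \ref{29set}, that no compatible involution exists.
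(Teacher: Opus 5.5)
Your wild case is exactly the paper's argument: Theorem~\ref{29set} allows at most $p$ involutions outside $G$ normalizing $G$, while $D\setminus G$ contains $N=pm>p$ of them. For the tame case the paper simply invokes tame lifting \cite{SGA1} together with the complex bound. Your attempt to avoid this has a genuine gap.

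The relation $c_1\cdots c_r=1$, with the $c_i$ generating $D$, does not follow from the double cover $\cX/G\to\cX/D$. That cover is branched exactly over the points with reflection stabilizers, so Hurwitz applied to it only says their number is even. In other words, it yields the image of the relation in $D/G\cong\mathbb Z/2$, not a relation in $D$. A relation in $D$ is the structure of the tame fundamental group of a punctured line, which is Grothendieck's theory again.

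Without that relation your bookkeeping does not close:
\begin{itemize}
\item The claim that $r=4$ with not all indices $2$ gives $g-1\ge N/2$ is false. The signature $(2,2,m_1,m_2)$ gives $g-1=N(1-1/m_1-1/m_2)$, e.g.\ $g-1=N/3$ for $(2,2,3,3)$.
\item ``The stabilizers generate $D$'' does not force $m\ge N/2$ in $(2,2,2,m)$, because two reflections $\sigma,\rho\sigma$ already generate $D$.
\item Concretely, $N=15$ with signature $(0;2,2,3,3)$ and $N=30$ with $(0;2,2,2,3)$ both give $g=6$. They satisfy Hurwitz, your parity condition and generation by stabilizers, yet $2N>28=4g+4$.
\end{itemize}
A smaller point: $\cX/S$ is not known to be rational. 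Argue instead that $\cX/S\to\cX/D\cong\mathbb P^1$ is unramified, so Hurwitz forces $S=D$.

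The missing constraint can be obtained algebraically, in either of two ways.

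\emph{Repairing your main line.} The only signatures needing more than parity are $(2,2,m_1,m_2)$ with exactly two reflection branch points. Then the double cover $\cX/G\to\cX/D$ has two branch points, so $\cX/G$ is rational. Kummer theory for the cyclic cover $\cX\to\cX/G\cong\mathbb P^1$ now gives $\mathrm{lcm}(m_1,m_2)=N$. Indeed, if all ramification indices divided a proper divisor $d$ of $N$, the divisor of the Kummer generator $h$ would be divisible by $N/d$. Then $h$ would be an $(N/d)$-th power and $z^N-h$ would be reducible. With $\mathrm{lcm}(m_1,m_2)=N$, the inequality $N(2/m_1+2/m_2-1)\le 4$, which is equivalent to $2N\le 4g+4$, is an easy check.

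\emph{Your fallback.} This also works and is even shorter, but it has to be carried out.
\begin{itemize}
\item By \cite{DGT}, $\cX\cong\cX(N,r,s)$ with $G=\langle\rho\rangle$, $\rho(y)=\xi y$, and fixed field $\K(x)$.
\item A reflection $\sigma$ induces an involution of $\mathbb P^1$ permuting $\{0,-1,\infty\}$. By Proposition~\ref{dickson} it fixes exactly one of them, so $\sigma(x)\in\{1/x,\,-(1+x),\,-x/(x+1)\}$.
\item The relation $\sigma\rho\sigma=\rho^{-1}$ puts $\sigma(y)$ in the $\xi^{-1}$-eigenspace $\K(x)y^{-1}$ of $\rho$, say $\sigma(y)=f(x)/y$.
\item Applying $\sigma$ to $y^N=x^r(1+x)^s$ gives, in the three cases, $f^N=x^{-s}(1+x)^{2s}$, $f^N=\pm x^{r+s}(1+x)^{r+s}$, and $f^N=\pm x^{2r}(1+x)^{-r}$.
\item These force $N\mid s$, $N\mid r+s$ (which gives $g=0$ by Theorem~\ref{recap}(i)), or $N\mid r$ respectively, all impossible.
\end{itemize}
Either repair makes the tame case independent of tame lifting, which the paper's proof is not.
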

\begin{proof} 
If the characteristic does not divide $|D|$, the result is a consequence of the Tame Lifting Theorem and the fact that the statement is valid for Riemann surfaces; see Introduction.
So, assume that $p\geq 3$ divides $|D|$.
If $|D|\le 4g(\cX)+4$, there is nothing to prove. Hence we may assume $|D|>4g(\cX)+4$. Then,
\[
N=\frac{|D|}{2}>2g(\cX)+2,
\]
and Proposition \ref{MAINvecchio} together with Theorem \ref{29set} imply that $N=pm$, $m>1$, and there are at most $p$ involutions in $\aut(\cX)$ that normalize $G$. Therefore, there are at most $p$ involutions in $D$. Thus $N\le p$, a contradiction.
\end{proof}

The aim of the final part of the section is to prove the following result. 
\begin{thm}\label{nt}
Let $\cX$ be a curve of genus $g(\cX)\geq 2$ defined over an algebraically closed field of characteristic $p\ge 3$. Then $\cX$ has an automorphism group $H$ of size divisible by $p$ and greater than $4g(\cX)+4$ admitting a cyclic subgroup of index $2$ if and only if $\cX$ is birationally equivalent to the curve with equation 
$$
y^p-y=x^m
$$
for some $m>1$ with $GCD(m,p)=1$. In this case $g(\cX)=\frac{(p-1)(m-1)}{2}$ and $|H|=2pm$. Also, $H$ is generated by $\rho$ and $\eta$ where
$$
\rho(x)=v^2 x, \quad \rho(y)=y+1
$$
and
$$
\eta(x)=v x, \quad \eta(y)=-y
$$
with $v\in \mathbb K$ a primitive $2m$-th root of unity.
\end{thm}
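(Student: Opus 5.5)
The proof has two directions. For sufficiency, I will check directly that the given maps are automorphisms of the stated curve and that they generate a group of the right size. For necessity, I will reduce to Propositions \ref{MAINvecchio} and \ref{Equi} and then redo the argument of Theorem \ref{29set} without assuming that the element of $H\setminus G$ is an involution.

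\emph{Sufficiency.} Let $\mathcal X: y^p-y=x^m$ with $m>1$ and $(m,p)=1$, so that $g(\mathcal X)=(p-1)(m-1)/2\ge 1$, with equality only for $p=3$, $m=2$, where the genus hypothesis fails. The map $\rho$ preserves the equation because $(y+1)^p-(y+1)=y^p-y$ and $(v^2)^m=1$. The map $\eta$ preserves it because $(-y)^p-(-y)=-(y^p-y)=-x^m=(vx)^m$, using $v^m=-1$.

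Since $\gcd(p,m)=1$, the element $\rho$ has order $pm$. Next, $\eta^2(x)=v^2x$ and $\eta^2(y)=y$, so $\eta^2\notin\langle\rho\rangle\Rightarrow$ is not immediate; what we need is $\eta\notin G=\langle\rho\rangle$. This holds because every element of $G$ sends $y\mapsto y+c$, whereas $\eta$ sends $y\mapsto -y$. A direct computation gives $\eta\rho\eta^{-1}(x)=v^2x$ and $\eta\rho\eta^{-1}(y)=y-1$, so $\eta\rho\eta^{-1}=\rho^{j}$ for the $j$ with $j\equiv 1 \pmod m$ and $j\equiv -1\pmod p$. Hence $\eta$ normalizes $G$, and $\eta^2\in G$ because it acts as $x\mapsto v^2x$, $y\mapsto y$, which is $\rho^{k}$ with $k\equiv 1\pmod m$, $k\equiv 0\pmod p$. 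Therefore $H=\langle\rho,\eta\rangle$ has order $2pm$, is divisible by $p$, and contains $G$ with index $2$. Finally,
$$2pm>4g(\mathcal X)+4=2pm-2p-2m+6 \iff p+m>3,$$
which always holds.

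\emph{Necessity.} Let $H$ with $p\mid |H|$ and $|H|>4g(\mathcal X)+4$ contain a cyclic subgroup $G$ of index $2$, and put $N=|G|$. Since $p\ge 3$ and $p\mid 2N$, we get $p\mid N$; moreover $N>2g(\mathcal X)+2$. Propositions \ref{MAINvecchio} and \ref{Equi} then give, up to birational equivalence, the model $y^p-y=x^m$ together with $G=\langle\rho\rangle$, where $N=pm$ and the genus formula holds. A primitive $m$-th root $\lambda$ can be written as $v^2$ for a primitive $2m$-th root $v$, which matches the normalization in the statement.

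Take any $\sigma\in H\setminus G$. As in the proof of Theorem \ref{29set}:
\begin{itemize}
\item $\sigma$ normalizes $G$ (index $2$), so it permutes the short orbits of $G$, namely $\{P_\infty\}$ and $\Delta$ with $|\Delta|=p$. Hence $\sigma$ fixes $P_\infty$ and preserves $\Delta$.
\item The coset $\sigma G$ induces a nontrivial involution of $\mathcal X/G\cong\mathbb P^1$ that fixes the zero and the pole of $t=x^m$. Therefore $\sigma(t)=-t$.
\end{itemize}
This gives $\sigma(x)^m=-x^m$, so $\sigma(x)=wx$ with $w^m=-1$; here $w$ is only a $2m$-th root of unity, since $\sigma$ need not be an involution. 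Substituting into the curve equation gives $(\sigma(y)+y)^p-(\sigma(y)+y)=0$, so $\sigma(y)=-y+i$ with $i\in\mathbb F_p$.

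It remains to replace $\sigma$ by $\eta$ inside the coset $\sigma G$. Since $w^m=v^m=-1$, we have $w=v\lambda^k$ for some $k$. Applying $\rho^j$ changes $x\mapsto \lambda^j x$ and $y\mapsto y+j$. By the Chinese remainder theorem, choose $j$ modulo $pm$ with $j\equiv -k\pmod m$ and $j$ chosen mod $p$ so that the translation in $y$ cancels $i$. Then $\sigma\rho^j$ (or $\rho^j\sigma$, whichever cancels correctly, after tracking the sign flip) equals $\eta$. Consequently $H=G\cup\eta G=\langle\rho,\eta\rangle$ and $|H|=2pm$.

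\emph{Main obstacle.} The step needing most care is the passage to the quotient. One has to justify that $\sigma$ fixes $P_\infty$ and not merely the orbit structure, and that the induced involution of $\mathcal X/G$ is $t\mapsto -t$ rather than the identity. The latter follows because $\sigma\notin G$ and $H/G$ acts faithfully on $\mathcal X/G$. Apart from this, the final normalization is simply bookkeeping with the Chinese remainder theorem.
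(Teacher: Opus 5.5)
Your proof is correct and follows the paper's route. Necessity comes from Propositions \ref{MAINvecchio} and \ref{Equi}, and sufficiency from checking directly that $\rho,\eta$ are automorphisms generating a group of order $2pm$ that contains $\langle\rho\rangle$ with index $2$; this is exactly the auxiliary proposition the paper proves right after the theorem.

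You also go a little beyond the paper in two places:
\begin{itemize}
\item You check explicitly that $2pm>4g(\cX)+4$, i.e.\ $p+m>3$, which the paper leaves implicit.
\item You run the argument of Theorem \ref{29set} for an arbitrary $\sigma\in H\setminus G$. This shows that, after normalization, every such $H$ equals $\langle\rho,\eta\rangle$, whereas the paper only exhibits this group. Your hedge there is unnecessary: taking $j\equiv -k \pmod m$ and $j\equiv -i \pmod p$ gives $\sigma\rho^j=\eta$ directly.
\end{itemize}
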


Then the {\em only if} part in Theorem \ref{nt} is a Corollary to Propositions \ref{MAINvecchio} and \ref{Equi}.
To complete the proof of Theorem \ref{nt} we need the following statement.
\begin{prop}
Let $\cX$ be the curve defined over an algebraically closed field of characteristic $p\ge 3$ by the following equation:
$$
\cX:y^p-y=x^m
$$
for some $m>1$ with $GCD(m,p)=1$. 
Let $\rho$ and $\eta$ be defined as follows: $$
\rho(x)=v^2 x, \quad \rho(y)=y+1
$$
 and
$$
\eta(x)=v x, \quad \eta(y)=-y
$$
with $v\in \mathbb K$ a primitive $2m$-th root of unity.
Then 
\begin{itemize}
\item[(a)] both $\rho$ and $\eta$ are automorphisms of $\cX$;
\item[(b)] the order of $\rho$ is equal to $pm$ and the order of $\eta$ is $2m$;
\item[(c)] Let $u$ be the integer between $1$ and $pm$ such that $u\equiv 1 \pmod m$ and $u \equiv p-1 \pmod p$; then $\eta\rho\eta^{-1}=\rho^u$.
\item[(d)] the group generated by $\rho$ and $\eta$ has size $2pm$.
\end{itemize}
\end{prop}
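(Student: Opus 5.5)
The plan is to verify each item directly on the generators $x,y$ of $\mathbb K(\cX)$. Since $\mathbb K(\cX)=\mathbb K(x,y)$ with the single relation $y^p-y=x^m$, a $\mathbb K$-linear substitution on $x,y$ defines an automorphism of $\cX$ exactly when it preserves this relation and is invertible.

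For (a), preservation of the relation is a one-line check in each case. For $\rho$ we have
\[
(y+1)^p-(y+1)=y^p-y=x^m=(v^2x)^m,
\]
because $v^{2m}=1$ and $(y+1)^p=y^p+1$ in characteristic $p$. For $\eta$ we have $(-y)^p-(-y)=-(y^p-y)$, since $p$ is odd. We also have $(vx)^m=v^mx^m=-x^m$, because $v$ is a primitive $2m$-th root of unity. Both maps are clearly invertible, with inverses $x\mapsto v^{-2}x$, $y\mapsto y-1$ and $x\mapsto v^{-1}x$, $y\mapsto -y$ respectively. Hence both extend to automorphisms of $\cX$.

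For (b), we iterate. First, $\rho^k(x)=v^{2k}x$ and $\rho^k(y)=y+k$, so $\rho^k=1$ if and only if $m\mid k$ and $p\mid k$. Since $\gcd(m,p)=1$, the order of $\rho$ is $pm$. Next, $\eta^k(x)=v^kx$ and $\eta^k(y)=(-1)^ky$. The condition $2m\mid k$ already forces $k$ even, so the order of $\eta$ is $2m$.

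For (c), we compute $\eta\rho\eta^{-1}$ on generators, keeping track of the composition convention. On $x$, all three maps act by scalars, so
\[
\eta\rho\eta^{-1}(x)=v^2x=\rho(x).
\]
On $y$ we get $\eta^{-1}(y)=-y$, then $\rho$ sends this to $-(y+1)$, and then $\eta$ sends it to $y-1$. The result is independent of the composition order, because each map acts on $y$ by an affine substitution with scalar coefficients. So $\eta\rho\eta^{-1}$ acts as $x\mapsto v^2x$ and $y\mapsto y-1$. On the other hand, $\rho^u(x)=v^{2u}x$ and $\rho^u(y)=y+u$. Therefore we need $u\equiv1\pmod m$ and $u\equiv -1\pmod p$, which is exactly the defining condition of $u$; such a $u$ exists by the Chinese Remainder Theorem. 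This step is where care about the order of composition matters. I expect it to be the only mildly delicate point, resolved by the observation about independence of order made above.

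For (d), item (c) shows that $\eta$ normalizes $\langle\rho\rangle$, so
\[
\langle\rho,\eta\rangle=\langle\rho\rangle\langle\eta\rangle,
\]
which has order $pm\cdot 2m/|\langle\rho\rangle\cap\langle\eta\rangle|$. Now determine the intersection. An element $\eta^k$ lies in $\langle\rho\rangle$ only if its action on $y$ is a translation; since $\eta^k(y)=\pm y$, this forces $k$ even and $\eta^k(y)=y$. Then $\eta^k=\rho^j$ requires $j\equiv0\pmod p$ and $v^k=v^{2j}$, and every $\eta^{2l}$ arises in this way, taking $j=lp'$ with $p'$ the inverse of $p$ modulo $m$. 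So the intersection is $\langle\eta^2\rangle$, of order $m$, and $|\langle\rho,\eta\rangle|=2pm$. Equivalently, $\langle\rho\rangle$ has index $2$, with coset representative $\eta$.

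Finally, combining this with the \emph{only if} direction and Proposition \ref{MAINvecchio} gives the genus formula, together with $|H|=2pm>4g+4$ because $2pm>2(p-1)(m-1)+4$. This completes the proof of Theorem \ref{nt}.
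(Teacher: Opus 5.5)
Your proposal is correct and follows the paper's proof essentially verbatim: direct checks on $x$ and $y$ for (a)--(c), and for (d) the product formula for $\langle\rho\rangle\langle\eta\rangle$ (a subgroup by (c)) with $\langle\rho\rangle\cap\langle\eta\rangle=\langle\eta^2\rangle$ of order $m$. Two minor slips should be fixed: in (d) the exponent should be $j=lpp'$ (so that $j\equiv 0\pmod p$ and $j\equiv l\pmod m$), not $j=lp'$; and in (c) your stated reason for order-independence is wrong in general, since affine substitutions need not commute---what actually makes the convention irrelevant here is that $\eta$ and $\eta^{-1}$ both send $y\mapsto -y$ and act on $x$ by scalars, so $\eta\rho\eta^{-1}=\eta^{-1}\rho\eta$.
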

\begin{proof}
The fact that $\rho(y)^p-\rho(y)=\rho(x)^m$ depends on the fact that $(v^2)^m=1$; on the other hand, $v^m=-1$ implies that $\eta$ is an automorphism of $\cX$. This proves (a). For a positive integer $i$ we have 
$\rho^i(x)=v^{2i}x$, $\rho^i(y)=y+i$, $\eta^i(x)=v^ix$, $\eta^{i}(y)=(-1)^iy$. Since $v$ is a primitive $2m$-th root of unity and $GCD(p,m)=1$, we have that (b) holds. 

To prove (c), we compute $\eta \rho \eta^{-1}$. It turns out that $\eta \rho \eta^{-1}(x)=v^2 x$ whereas $\eta \rho \eta^{-1}(y)=y-1$. Now, let $u$ be the integer between $1$ and $pm$ such that $u\equiv 1 \pmod m$ and $u \equiv p-1 \pmod p$. Then $\rho^u(x)=v^2 x$ and $\rho^u(y)=y+p-1=y-1$, which implies $\eta\rho\eta^{-1}=\rho^u$.

Let $G=\langle \rho \rangle$ and $I=\langle \eta \rangle$. Note that $\eta^i\in G$ if and only if $i$ is even, that is, $|G\cap I|=m$. Then $|GI|=2pm$. To prove (d) we only need to note that by (c) we have $GI=IG$, that is, $\langle G,I \rangle $ coincides with $GI$.

\end{proof}

\section{Groups of order coprime to $p$}\label{sez4}



Throughout this section we assume
\begin{equation}\label{condizioneQD}
|H|>4\gg+4,
\end{equation} 
and, if the characteristic of the ground field is $p>0$, we assume that $p$ does not divide $2N$.
By Theorem \ref{nd}, $H$ is not a dihedral group.




Note that Condition \eqref{condizioneQD} implies that $\cX$ admits a cyclic group of automorphisms of size greater than $2g(\cX)+2$. From the classification obtained in \cite{DGT}, then $\cX$ must be isomorphic to a curve of equation
$$
\mathcal X(N,r,s):y^N=x^r(1+x)^s
$$
where $1\le r,s<N$, $GCD(N,r,s)=1$. Also, $G$ is the subgroup generated by
\begin{equation}\label{defro}
\rho(x)=x,\quad \rho(y)=\xi y,
\end{equation}
where $\xi$ is a primitive $N$-th root of unity.
We recall some facts on curves of type $\mathcal X(N,r,s)$ from \cite{DGT}.
\begin{thm}\label{recap}
Let $$
\mathcal X(N,r,s):y^N=x^r(1+x)^s
$$
where $1\le r,s<N$, $GCD(N,r,s)=1$, be defined over an algebraically closed field $\mathbb K$.
If the characteristic of $\mathbb K$ is $p>0$, assume that $p$ does not divide $N$. Then the following holds:
\begin{enumerate}
\item[(i)] The genus $g$ of $\cX(N,r,s)$ satisfies
$$
2g-2=N-GCD(N,r)-GCD(N,s)-GCD(N,r+s).
$$
\item[(ii)] There are exactly $M_1\coloneqq GCD(N,r)$ places of $\mathcal X(N,r,s)$, say $P_1,\ldots,P_{M_1}$, centered at the affine point $(0,0)$.

\item[(iii)] There are exactly $M_2\coloneqq GCD(N,s)$ places of $\mathcal X(N,r,s)$, say $R_1,\ldots,R_{M_2}$, centered at the affine point $(-1,0)$.

\item[(iv)] There are exactly $M_3\coloneqq GCD(N,r+s)$ places of $\mathcal X(N,r,s)$, say $Q_1,\ldots,Q_{M_3}$, centered at the only infinite point of $\mathcal X(N,r,s)$.

\item[(v)] the divisors of $x$, $x+1$, and $y$ are
$$
\div(x)=\frac{N}{M_1}(P_1+\ldots+P_{M_1})-\frac{N}{M_3}(Q_1+\ldots+Q_{M_3}),
$$

$$
\div(x+1)=\frac{N}{M_2}(R_1+\ldots+R_{M_2})-\frac{N}{M_3}(Q_1+\ldots+Q_{M_3}),
$$

and
$$
\div(y)=\frac{r}{M_1}(P_1+\ldots+P_{M_1})
+\frac{s}{M_2}(R_1+\ldots+R_{M_2})
-\frac{r+s}{M_3}(Q_1+\ldots+Q_{M_3}),
$$

\item[(vi)] The genus of the quotient curve 
$\mathcal X(N,r,s)/\langle \rho \rangle$ is $0$.

\item[(vii)] The only short orbits of the places of  $\mathcal X(N,r,s)$ under the action of $\langle \rho \rangle $ are $\Omega_1=\{P_1,\ldots,P_{M_1}\}$,
$\Omega_2=\{R_1,\ldots,R_{M_2}\}$, $\Omega_3=\{Q_1,\ldots,Q_{M_3}\}$.

\end{enumerate}
\end{thm}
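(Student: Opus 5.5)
The plan is to work entirely with the function field $\mathbb K(\cX)=\mathbb K(x,y)$, viewed as a Kummer extension of the rational function field $\mathbb K(x)$. Write $f(x)=x^r(1+x)^s$.

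\textbf{Step 1: the extension is Kummer of degree $N$ and Galois with group $\langle\rho\rangle$.*}
\begin{itemize}
\item For any $d\mid N$ with $d>1$, $f$ is not a $d$-th power in $\mathbb K(x)$. Indeed its valuations at $x=0$ and $x=-1$ are $r$ and $s$, and these are not both divisible by $d$ because $\gcd(N,r,s)=1$.
\item Since $p\nmid N$, $\mathbb K$ contains a primitive $N$-th root of unity $\xi$. Kummer theory then says $T^N-f$ is irreducible, so $[\mathbb K(x,y):\mathbb K(x)]=N$.
\item The extension is cyclic, and its Galois group is generated by $\rho$ of \eqref{defro}.
\item Consequently the fixed field of $\langle\rho\rangle$ is exactly $\mathbb K(x)$. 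This gives (vi).
\item Because the extension is Galois, $\langle\rho\rangle$ acts transitively on the places lying over any fixed place of $\mathbb K(x)$.
\end{itemize}

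\textbf{Step 2: ramification and the number of places.*} Use the standard Kummer ramification rule: over a place $P$ of $\mathbb K(x)$ where $f$ has valuation $v$, there are exactly $\gcd(N,v)$ places, each with ramification index $N/\gcd(N,v)$.
\begin{itemize}
\item At $x=0$ the valuation is $v=r$.
\item At $x=-1$ it is $v=s$.
\item At $x=\infty$ it is $v=-(r+s)$.
\item At every other place $v=0$, so those places are unramified.
\end{itemize}
This yields (ii), (iii) and (iv), with $M_1=\gcd(N,r)$, $M_2=\gcd(N,s)$, $M_3=\gcd(N,r+s)$. One should check that at $x=0$ and $x=-1$ there is a single point of the affine model, so the places "centered" there are exactly the ones above $x=0$ and $x=-1$. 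Similarly, all places over $x=\infty$ lie over the single point at infinity.

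\textbf{Step 3: genus, divisors and short orbits.*}
\begin{itemize}
\item \emph{Genus (i).} The extension is tame because $p\nmid N$, so each ramified place has different exponent $e-1$. Hurwitz formula \eqref{Hurwitz} over the rational curve then gives
\[
2g-2=-2N+\sum_{i=1}^3 M_i\Bigl(\frac{N}{M_i}-1\Bigr)=N-M_1-M_2-M_3 ,
\]
which is (i).
\item \emph{Divisors of $x$ and $x+1$ (v).} The divisor of $x$ on $\cX$ is the conorm of its divisor on $\mathbb P^1$, multiplied by the ramification indices. The same applies to $x+1$.
\item \emph{Divisor of $y$ (v).} Apply $v_P$ to the equation $y^N=f$. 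This gives $N\,v_{P_j}(y)=r\cdot N/M_1$, so $v_{P_j}(y)=r/M_1$. The values at the $R_j$ and $Q_j$ follow in the same way.
\item \emph{Short orbits (vii).} By transitivity, the fibres $\Omega_1,\Omega_2,\Omega_3$ are $\langle\rho\rangle$-orbits, of sizes $M_1,M_2,M_3$. Every other fibre is long.
\end{itemize}

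\textbf{Main obstacle.} The delicate point is (vii) in the boundary case. Since $1\le r,s<N$, we have $M_1,M_2<N$, so $\Omega_1$ and $\Omega_2$ are genuinely short. However, if $r+s\equiv 0\pmod N$ then $M_3=N$, and $\Omega_3$ is a long orbit. It should then be read as a degenerate "short orbit" with trivial stabilizer; this is harmless for the genus formula. I would state this convention explicitly.

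A related check concerns the irreducibility argument in Step 1. It uses only the valuations at $0$ and $-1$, which is why the hypothesis $\gcd(N,r,s)=1$, rather than $\gcd(N,r,s,r+s)=1$, suffices.
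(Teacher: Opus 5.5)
The paper contains no proof of this theorem to compare against: it is recalled from the authors' earlier work (their reference DGT) with the words ``we recall some facts''. Your proof is correct and self-contained, and it is the natural Kummer-theoretic argument.

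\begin{enumerate}
\item The valuations $r,s$ at $x=0,-1$, together with $\gcd(N,r,s)=1$, show that $x^r(1+x)^s$ is not a $d$-th power in $\mathbb K(x)$ for any $d\mid N$, $d>1$.
\item Hence $\mathbb K(x,y)/\mathbb K(x)$ is a cyclic Kummer extension of degree $N$, with group $\langle\rho\rangle$ and fixed field $\mathbb K(x)$. This gives (vi).
\item The Kummer rule $e=N/\gcd(N,v_P(f))$, with residue degree $1$, gives (ii)--(iv).
\item Tame Riemann--Hurwitz gives (i).
\item Conorms and $N\,v(y)=v(f)$ give (v).
\item Transitivity of the Galois group on fibres gives (vii).
\end{enumerate}
All the arithmetic checks out.

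Your boundary remark can be sharpened. Since $2\le r+s\le 2N-2$, the case $M_3=N$ means exactly $r+s=N$. Then every common divisor of $N$ and $r$ divides $s=N-r$, so $M_1=M_2=1$ and (i) gives $2g-2=-2$, i.e.\ $g=0$. Indeed, $(y/(1+x))^N=(x/(1+x))^r$ with $\gcd(N,r)=1$. So this case is excluded by the paper's standing assumption $g\ge 2$. You need no convention about ``degenerate short orbits''; simply rule it out.

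In that same case, your Step 2 assertion that all places over $x=\infty$ lie over a single point at infinity is false. The plane model meets the line at infinity in the $N$ nonsingular points $(1:\zeta:0)$ with $\zeta^N=1$. So the phrase ``the only infinite point'' in (iv) also presupposes $r+s\neq N$. When $r+s\neq N$ there is indeed a unique point at infinity: $(1:0:0)$ if $r+s<N$, and $(0:1:0)$ if $r+s>N$.
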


Before delving into the investigation of the group $H$, we prove a series of results about the family of curves $\mathcal X(N,r,s)$.
\begin{lem}\label{isomorfismi}
Curves $\mathcal X(N,r,s)$, $\mathcal X(N,s,r)$, $\mathcal X(N,N-r-s,s)$, $\mathcal X(N,r,N-r-s)$ are birationally equivalent. 
\end{lem}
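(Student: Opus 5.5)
We prove the lemma by writing down explicit birational maps between the function fields. The curve $\mathcal X(N,r,s)$ has function field $\mathbb K(x,y)$ with $y^N=x^r(1+x)^s$. In each case we apply a Möbius transformation of $x$ that permutes the three points $0,-1,\infty$ of the quotient line. We then rescale $y$ by a suitable rational function of the new variable, so that the equation takes the target form. Since being birationally equivalent is an equivalence relation, it is enough to treat two generating transpositions. One swaps the roles of $r$ and $s$. The other swaps $r$ and the exponent $N-r-s$ attached to $\infty$. The remaining equivalence, with $\mathcal X(N,r,N-r-s)$, follows by composing these.

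For $\mathcal X(N,r,s)\simeq\mathcal X(N,s,r)$, set $x'=-(1+x)$, so that $1+x'=-x$. Then
\[
y^N=x^r(1+x)^s=(-1)^{r+s}(1+x')^r x'^s .
\]
Choose $c\in\mathbb K$ with $c^N=(-1)^{r+s}$ and put $y'=y/c$. This gives $y'^N=x'^s(1+x')^r$.

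For $\mathcal X(N,r,s)\simeq\mathcal X(N,N-r-s,s)$, set $x'=1/x$, so that $1+x=(1+x')/x'$. Then
\[
y^N=x'^{-r}(1+x')^s x'^{-s}=x'^{-r-s}(1+x')^s .
\]
Put $y'=y\,x'$. Then
\[
y'^N=x'^{N-r-s}(1+x')^s .
\]
In both cases the substitution is invertible: $x$ and $y$ are recovered rationally from $x'$ and $y'$. Hence $\mathbb K(x,y)=\mathbb K(x',y')$, and the two curves are birationally equivalent.

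Finally, apply the first equivalence to $\mathcal X(N,N-r-s,s)$ to reach $\mathcal X(N,s,N-r-s)$. Then apply the second equivalence, with the roles $(r,s)$ replaced by $(s,N-r-s)$, to reach $\mathcal X(N,N-s-(N-r-s),N-r-s)=\mathcal X(N,r,N-r-s)$.

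Two conditions in the definition need checking, namely $1\le\cdot<N$ and $GCD(N,\cdot,\cdot)=1$. The gcd condition is preserved because $GCD(N,N-r-s,s)=GCD(N,r,s)$. The range condition requires $r+s\neq N$; if $r+s=N$, the point at infinity is unramified and the exponent can be reduced modulo $N$ (absorbing an $N$-th power into $y$), so the same substitutions still work. This bookkeeping is the only point needing care. The remaining steps are routine algebra.
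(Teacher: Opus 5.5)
Your proof is correct and is essentially the paper's argument. Your substitutions $x'=-(1+x)$, $y'=y/c$ and $x'=1/x$, $y'=yx'$ are exactly two of the paper's three, and the only difference is that the paper writes the third one directly ($x=-\frac{u}{u+1}$, $y=\zeta\frac{v}{u+1}$ with $\zeta^N=(-1)^r$), whereas you obtain it by composition.

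One small slip in your closing bookkeeping, a point the paper does not address at all: reducing the exponent modulo $N$ by rescaling $y$ with a power of $x'$ is what handles $r+s>N$, where $N-r-s<0$. For $r+s=N$, by contrast, the exponent is $0$, and the curve $y^N=(1+x)^s$ with $\mathrm{GCD}(N,s)=1$ is rational, so it is irrelevant in the genus $\ge 2$ setting.
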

\begin{proof} Let $y^{N}=x^r(x+1)^s$. Then
\begin{itemize}
\item for $x=1/u$, $y=v/u$ we have $v^N=u^{N-r-s}(1+u)^s$;
\item for $x=-u-1$ and $y=\zeta v$ with $\zeta^{N}=(-1)^{r+s}$ we have $v^N=u^s(1+u)^r$;
\item for $x=-\frac{u}{u+1}$, $y=\zeta \frac{v}{u+1}$ with $\zeta^{N}=(-1)^{r}$ we have $v^N=u^r(u+1)^{N-r-s}$
\end{itemize}
\end{proof}

Let $\rho$ be as in \eqref{defro}.
As $G=\langle \rho \rangle$ is normal in $H$, the factor group $H/G$ has order $2$ and acts on the projective line. By the Riemann-Hurwitz genus formula, $H/G$ has exactly $2$ fixed points on the projective line. For any $\eta\in H\setminus G$, this  action is isomorphic to that of $\langle \eta \rangle$ on the orbits of $\cX=\cX(N,r,s)$ under $G$. As $G$ acts exactly with three short orbits, we deduce that one of them is fixed and the other two are swapped. Moreover, there is also a long orbit $\Delta$ under $G$ which is fixed by $H$. From now on, we denote with $\sigma$ a non-trivial element in the stabilizer of any point in $\Delta$. Then $\sigma$ is an involution not belonging to $G$.

\begin{prop}\label{3cases} Let $M_i\ge 1$, $i=1,2,3$, as in Theorem \ref{recap}. Then without loss of generality we may assume that $M_1=M_3=1$ and $M_2> 2$.
\end{prop}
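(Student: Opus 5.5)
The plan is to use the action of $H/G$ on the three short $G$-orbits described just before the statement, and then to exploit the arithmetic of $M_1,M_2,M_3$ given by Theorem \ref{recap}. As observed above, $\eta\in H\setminus G$ fixes one of the short orbits $\Omega_1,\Omega_2,\Omega_3$ and swaps the other two. Since $\eta$ normalizes $G$, it maps $G$-orbits bijectively onto $G$-orbits, so the two swapped orbits have the same size.

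The first step is a normalization. The birational equivalences in Lemma \ref{isomorfismi} are given by Möbius substitutions in $x$ together with $y\mapsto \zeta v/\ell(u)$. They transform $\rho$ into an automorphism of the same shape (multiplication of the new $y$-coordinate by $\xi$), so they preserve $G$. They permute the three branch points $0,-1,\infty$ of $\cX/G$, and correspondingly permute the exponents $(r,s,N-r-s)$. The transpositions listed in Lemma \ref{isomorfismi} generate the full symmetric group on these three points. We may therefore assume that the fixed short orbit is $\Omega_2$ and that $\eta$ swaps $\Omega_1$ and $\Omega_3$. Hence $M_1=M_3$.

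The second step shows $M_1=1$. From Theorem \ref{recap} we have $M_1=\gcd(N,r)$, $M_2=\gcd(N,s)$, $M_3=\gcd(N,r+s)$, and $\gcd(N,r,s)=1$. These three numbers are pairwise coprime. For instance, a common divisor of $M_1$ and $M_3$ divides $N$, $r$ and $r+s$, hence also $s$, so it equals $1$. The same argument applies to the other pairs. Therefore $M_1=M_3$ forces $M_1=M_3=1$.

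The third step shows $M_2>2$. By Theorem \ref{recap}(i), $2g-2=N-2-M_2$, that is, $N=2g+M_2$. Condition \eqref{condizioneQD} gives $2N>4g+4$, i.e. $N>2g+2$, and hence $M_2>2$.

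The only delicate point is the first step. One must check that the substitutions of Lemma \ref{isomorfismi} really carry $G=\langle\rho\rangle$ to the corresponding group on the new model, so that $H$ transports to a group of the same type. One must also check that the triple $(N,r,s)$ stays in the normalized range $1\le r,s<N$ with $\gcd(N,r,s)=1$, which may require reducing exponents modulo $N$. Both facts follow from the explicit formulas in Lemma \ref{isomorfismi}. The remaining steps are purely arithmetic.
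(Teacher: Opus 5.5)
Your argument is correct, but it shows that the swapped orbits are trivial in a different way from the paper.

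The paper uses Lemma \ref{isomorfismi} only to arrange $M_2>1$. It then treats separately the three cases (A), (B), (C) for the orbit fixed by $\eta$. In each case it computes $\mathrm{div}\bigl((\eta(y)/y)^N\bigr)$ via Theorem \ref{recap}(v). The coefficients of this divisor must be multiples of $N$, which forces one swapped orbit to be a singleton; for example, $M_1\mid 2r+s$ and $M_1\mid r$ give $M_1=1$. Equality of the sizes of the swapped orbits then rules out (A) and (C).

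You avoid both the divisor computation and the case split. Equal sizes of the swapped orbits, together with pairwise coprimality of $M_1,M_2,M_3$ (from $\gcd(N,r,s)=1$), force both swapped orbits to have size $1$. Lemma \ref{isomorfismi} then only has to move the fixed branch point to $-1$. The paper's divisor step is in effect a disguised form of your coprimality observation, so your version is shorter and purely arithmetic.

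What the paper's route buys is a byproduct. In case (B) it records $\mathrm{div}(\eta(y)/y)=(2r+s)(Q_1-P_1)$, which is quoted later in Lemma \ref{trenta}. With your proof, that formula would have to be derived separately at that point. This is easy: $\eta(x)=1/x$ gives $(\eta(y)/y)^N=x^{-(2r+s)}$.
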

\begin{proof} 
Note that (i) together with Condition \eqref{condizioneQD} implies that at least one integer among 
$M_1$, $M_2$, $M_3$ is greater than $1$. Without loss of generality, by Lemma \ref{isomorfismi}, we may assume that  $M_2>1$. We next show that 
$M_1=M_3=1$ and $M_2>2$.

From the above discussion, for a $\eta\in H\setminus G$ we can distinguish three cases:

\begin{itemize}
\item[(A)] $\eta(\Omega_3)=\Omega_3$.
Here $\Omega_1$ and $\Omega_2$ are swapped. If $\bar \eta$ is the non-trivial element in $H/G$, then we have
$$
\bar \eta(0)=-1,\bar \eta(-1)=0, \bar \eta(\infty)=\infty
$$
Therefore the action of $\eta$ on $\mathbb K(x)$ is determined; in particular
$$
\eta(x)=-(x+1)
$$

\item[(B)] $\eta(\Omega_2)=\Omega_2$
Here $\Omega_1$ and $\Omega_3$ are swapped. Arguing as in case (A) we have
$$
\eta(x)=1/x
$$

\item[(C)] $\eta(\Omega_1)=\Omega_1$
Here $\Omega_2$ and $\Omega_3$ are swapped.
Arguing as in case (A) we have
$$
\eta(x)=-\frac{x}{x+1}
$$
\end{itemize}
In order to investigate the possible values of $\eta(y)$,  note that
$$
\eta(y)^N=\eta(x)^r(1+\eta(x))^s.
$$
Therefore,
$$
\left(\frac{\eta(y)}{y}\right)^N=\frac{\eta(x)^r(1+\eta(x))^s}{x^r(1+x)^s}.
$$
The computation of the divisor  of $(\frac{\eta(y)}{y})$ will provide the statement will provide a contradiction in Cases (A) and (C).
\begin{itemize}

\item if (A) holds, then
$$
\div\left(\left(\frac{\eta(y)}{y}\right)^N\right)= \frac{N(s-r)}{M_1}(P_1+\ldots+P_{M_1})-\frac{N(s-r)}{M_2}(R_1+\ldots+R_{M_2})
$$
Then $\frac{N(s-r)}{M_1}$ must be an integer multiple of $N$ and $M_1$ is a divisor of $s-r$. As $M_1$ divides $r$ but $GCD(M_1,s)=1$, the only possibility is  $M_1=1$. Then $|\Omega_1|=1$ and hence $|\Omega_2|=M_2=1$, a contradiction.

\item if (B) holds, then
$$
\div\left(\left(\frac{\eta(y)}{y}\right)^N\right)= \frac{N(2r+s)}{M_3}(Q_1+\ldots+Q_{M_3})-\frac{N(2r+s)}{M_1}(P_1+\ldots+P_{M_1})
$$
Then $\frac{N(2r+s)}{M_1}$ must be an integer multiple of $N$ and $M_1$ is a divisor of $2r+s$. As $M_1$ divides $r$ but $GCD(M_1,s)=1$, the only possibility is  $M_1=1$. Then $|\Omega_1|=1$ and hence $|\Omega_3|=M_3=1$ and
$$
\div\left(\frac{\eta(y)}{y}\right)=(2r+s)(Q_1-P_1).
$$
Also, $2g-2=N-2-M_2$ together with Condition \eqref{condizioneQD} gives $M_2>2$.

\item if (C) holds, then
$$
\div\left(\left(\frac{\eta(y)}{y}\right)^N\right)= \frac{N(r+2s)}{M_3}(Q_1+\ldots+Q_{M_3})-\frac{N(r+2s)}{M_2}(R_1+\ldots+R_{M_2})
$$
Arguing as in case (B) we have $M_2=M_3=1$, a contradiction.
\end{itemize}
\end{proof}

From now on, we let $M=M_2$ as in Proposition \ref{3cases} and $\Omega$ will denote the short orbit of $G$ of size $M$. 

We now show that $r=1$ can be assumed in $\mathcal X(N,r,s)$.

\begin{lem} The curve $\mathcal X(N,r,s)$ is birationally equivalent to $\mathcal X(N,1,w)$ for some $w$. Also, $GCD(N,s)=1$ if and only if $GCD(N,w)=1$. 
\end{lem}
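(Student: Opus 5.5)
In the situation of Proposition \ref{3cases} we have $M_1=GCD(N,r)=1$, so $r$ is invertible modulo $N$. The plan is to change the generator of the function field over $\mathbb K(x)$ by a power of $y$ and then clear denominators by a monomial factor. Let $t$ be an integer with $rt\equiv 1 \pmod N$, and write $rt=1+kN$. Note $GCD(t,N)=1$. Put
$$
z=\frac{y^t}{x^k(1+x)^{\ell}}
$$
for a suitable integer $\ell$ chosen below. We compute
$$
z^N=\frac{x^{rt}(1+x)^{st}}{x^{kN}(1+x)^{\ell N}}=x\,(1+x)^{st-\ell N}.
$$
Take $\ell=\lfloor st/N\rfloor$ and set $w\equiv st \pmod N$ with $0\le w<N$. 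Then $z^N=x(1+x)^w$. If $w=0$ this would force $N\mid st$, hence $N\mid s$; this is impossible since $1\le s<N$. So $1\le w<N$, and $GCD(N,1,w)=1$ holds trivially.

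Next we check that $\mathbb K(x,z)=\mathbb K(x,y)$. Clearly $z\in\mathbb K(x,y)$. Conversely, choose integers $a,b$ with $at+bN=1$, which is possible since $GCD(t,N)=1$. Then $y=y^{at}y^{bN}=(y^t)^a\,(x^r(1+x)^s)^b$. Since $y^t=z\,x^k(1+x)^\ell\in\mathbb K(x,z)$, it follows that $y\in\mathbb K(x,z)$. The two function fields therefore coincide, so $\mathcal X(N,r,s)$ is birationally equivalent to $\mathcal X(N,1,w)$. Alternatively, one can check that $[\mathbb K(x,z):\mathbb K(x)]=N$ because $z^N=x(1+x)^w$ is irreducible, which holds since $x$ appears to the first power; then compare degrees.

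For the gcd statement, $w\equiv st\pmod N$ and $t$ is a unit modulo $N$, so $GCD(N,w)=GCD(N,st)=GCD(N,s)$. In particular $GCD(N,s)=1$ if and only if $GCD(N,w)=1$, and more generally $M_2$ is preserved, consistent with Theorem \ref{recap}(iii).

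The only delicate point is making sure the substitution really generates the whole function field, rather than a subfield of index dividing $N$. This is handled by the Bézout argument above. Everything else is bookkeeping of exponents.
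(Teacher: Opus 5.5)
Your proof is correct and is the paper's own argument: invert $r$ modulo $N$ (the paper's $h$, your $t$) and replace $y$ by $y^t/(x^k(1+x)^{\ell})$, which gives an equation $x(1+x)^w$ with $w\equiv st \pmod N$. You also supply details the paper leaves implicit. These are that $w\neq 0$, that the Bézout relation $at+bN=1$ shows the new function generates all of $\mathbb K(x,y)$ over $\mathbb K(x)$, and that $GCD(N,w)=GCD(N,st)=GCD(N,s)$ because $t$ is a unit modulo $N$.
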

\begin{proof} As $M_1=1$ there exists an integer $h$ such that
$rh=1+N\ell$ for some $\ell\in \mathbb Z$. Let $w\in \{0,\ldots, N-1\}$ be such that $sh\equiv w \pmod N$, that is, $sh=w+Nz$.
Consider the following rational functions on $\mathcal X(N,r,s)$:
$u=x$ and $v=\frac{y^h}{x^\ell(1+x)^z}$. Then
$$
v^N=\frac{y^{hN}}{x^{\ell N}(1+x)^{zN}}=
\frac{x^{rh}(1+x)^{sh}}{x^{\ell N}(1+x)^{zN}}=x(1+x)^{w}
$$
\end{proof}

\begin{cor} Up to birational equivalence, $r=1$ can be assumed.
\end{cor}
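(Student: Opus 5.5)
The statement follows directly from the preceding Lemma, once we check that its hypothesis is available. The only input that Lemma needs is $M_1=GCD(N,r)=1$. By Proposition \ref{3cases}, we may assume without loss of generality that $M_1=M_3=1$ and $M_2>2$, where the normalization is achieved through the birational equivalences of Lemma \ref{isomorfismi}. So the first step is simply to place ourselves in this normalized situation, which gives $GCD(N,r)=1$.

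With $GCD(N,r)=1$, choose $h$ with $rh=1+N\ell$ and write $sh=w+Nz$ with $0\le w<N$. The preceding Lemma then says that the substitution $u=x$, $v=y^h/(x^\ell(1+x)^z)$ sends $\mathcal X(N,r,s)$ to $\mathcal X(N,1,w)$. One point needs care: the substitution only produces a dominant rational map, and we need it to be birational. It is, because $h$ is invertible modulo $N$. Take $h'$ with $hh'\equiv 1\pmod N$. Then $y$ is recovered from $v^{h'}$ as $y$ times a monomial in $x$ and $1+x$, using $y^N=x^r(1+x)^s$. So $\mathbb K(u,v)=\mathbb K(x,y)$. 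I would also verify that the new parameters remain admissible, namely $1\le w<N$ and $GCD(N,1,w)=1$. The second condition is automatic. For the first, $w\neq 0$ because $GCD(N,w)=GCD(N,s)=M_2<N$, since $M_2=N$ would make the genus formula of Theorem \ref{recap}(i) degenerate.

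Finally, the normalization $M_1=M_3=1$, $M_2=M>2$ must survive the passage to the new model. The second assertion of the preceding Lemma gives $GCD(N,s)=1$ if and only if $GCD(N,w)=1$. More precisely, $GCD(N,w)=GCD(N,sh)=GCD(N,s)$ since $h$ is a unit modulo $N$. The new model has $M_1'=GCD(N,1)=1$ and $M_2'=GCD(N,w)=M$. It also has $M_3'=1$: the genus, which is a birational invariant, is computed by Theorem \ref{recap}(i), and $M_3'$ is forced by it. Alternatively, $GCD(N,1+w)=GCD(N,h(r+s))=GCD(N,r+s)=1$. Likewise, the generator $\rho$ of $G$ is carried to $v\mapsto \xi^h v$, which still generates the same cyclic group. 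Hence $r=1$ can be assumed without changing $N$, $M$, $G$ or the genus.

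I expect no real obstacle here. The only mildly delicate point is making sure the reduction is compatible with the earlier normalization of Proposition \ref{3cases} and with the description of $G$, which the argument above handles.
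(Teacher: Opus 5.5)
Your proposal is correct and is essentially the paper's argument. The paper states the corollary as an immediate consequence of the preceding Lemma, whose proof uses exactly your substitution $u=x$, $v=y^h/(x^\ell(1+x)^z)$, made possible by $M_1=\gcd(N,r)=1$ from Proposition~\ref{3cases}.

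Your extra checks are valid details that the paper leaves implicit: birationality because $h$ is a unit modulo $N$, $\gcd(N,w)=\gcd(N,s)$, $\gcd(N,1+w)=\gcd(N,r+s)=1$, and preservation of $G$. One small simplification: $w\neq 0$ follows directly from $1\le s<N$, which already gives $\gcd(N,s)<N$, so you do not need the genus formula there.
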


Two classes of curves $\mathcal X(N,r,s)$ for which Condition \eqref{condizioneQD} holds  are described as follows.

\begin{exmp}\label{gHERM}
Let $m>3$ be an integer such that $p\nmid m^2-1$, and
\begin{equation}\label{HGEN}
\mathcal H(m): y^{m^2-1}=x(x+1)^{m-1}
\end{equation}
be defined over an algebraically closed field $\mathbb K$ of characteristic $p$.
For a primitive $(m^2-1)$-th root of unity $\xi\in \mathbb K$, let $\rho$ and $\sigma$ be the automorphisms of $\mathcal H(m)$ defined as follows: 
\begin{itemize}
\item
$\rho(x)=x$, $\rho(y)=\xi y$
\item $\sigma (x)=\frac{1}{x}$, $\sigma(y)=\frac{x+1}{y^m}$
\end{itemize}
It is straightforward to check that
$$
\rho^{m^2-1}=1, \quad \sigma^2=1, \quad \sigma \rho \sigma=\rho^{m^2-m-1}.
$$
Then, for $N=m^2-1$, the group $H$ generated by $\rho$ and $\sigma$ has order $2N$. The genus of $\cX$ is $g=\frac{|H|+2-2m}{4}$, whence $|H|>4g+4$ as $m>3$.
\end{exmp}

\begin{exmp}\label{gH2}
Let $m>2$ be an integer such that $p\nmid m^2-1$, and
\begin{equation}\label{gH1}
\mathcal L(m):y^{m^2-1}=x(x+1)^{m}
\end{equation}
be defined over an algebraically closed field $\mathbb K$ of characteristic $p$.

Let $\rho$ be defined as in Example \ref{gHERM}.
Then $\sigma(x)=-(x+1), \sigma(y)=-y^{m}/(x+1)$ is an  automorphism which normalizes $\langle \rho \rangle$, since
 $\sigma\rho\sigma^{-1}=\rho^m$. Also $\sigma^2\in G$, so the group generated by $G$ and $\sigma$ has order $2N=2(m^2-1)$. Note that the order of $\sigma$ is $2$ if $m$ is even, $4$ if $m$ is odd. 
The genus of $\cX$ is $g=\frac{|H|-2(m+1)}{4}$, , whence $|H|>4g+4$ as $m>2$.

Note that by Lemma \ref{isomorfismi}, Example \ref{gH2} is isomorphic to 
$$
y^{m^2-1}=x(x+1)^{m^2-m-2}
$$
\end{exmp}

In the following statements we determine some properties of the group $H$.

\begin{lem}\label{24stab}
Let $L$ be the (unique cyclic) subgroup of $G$ of order $N/M$. Then $\cX/L$ is a rational curve and the factor group $H/L$ is the dihedral group of order $2M$.
\end{lem}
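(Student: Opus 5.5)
We work on $\cX=\mathcal X(N,1,s)$ with $M_1=M_3=1$ and $M=M_2=\gcd(N,s)>2$, as given by Proposition \ref{3cases}. The aim is to show that the fixed field of $L$ is rational, that $L$ is normal in $H$, and that the quotient $H/L$ is cyclic-by-involution with inversion.

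\emph{Step 1: $L$ is normal in $H$.} Since $G$ is cyclic and normal in $H$, its unique subgroup $L$ of order $N/M$ is characteristic in $G$. Hence $L$ is normal in $H$.

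\emph{Step 2: $\cX/L$ is rational.} We have $L=\langle\rho^M\rangle$ with $\rho^M(y)=\xi^M y$ and $\rho^M(x)=x$. So $L$ fixes $x$ and $z:=y^{N/M}$, and $\K(\cX)^L$ contains $\K(x,z)$. This subfield has degree $N/M$ in $\K(\cX)=\K(x,y)$, because $y^{N/M}=z$ and $[\K(x,y):\K(x)]=N$. Therefore $\K(\cX)^L=\K(x,z)$, with
\[
z^M=x(1+x)^s .
\]
Write $s=Ms'$. Then $w:=z/(1+x)^{s'}$ satisfies $w^M=x$, so $\K(x,z)=\K(w)$ is rational.

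\emph{Step 3: $H/L$ is dihedral of order $2M$.} The group $H/L$ has order $2M$. Its subgroup $G/L$ is cyclic of order $M$, generated by $\bar\rho$, which acts as $w\mapsto \xi^{N/M}w$; this multiplier is a primitive $M$-th root of unity. It then suffices to find an involution in $H/L$ outside $G/L$ that inverts $\bar\rho$. We take the image $\bar\sigma$ of the involution $\sigma$ fixed earlier (the stabilizer of a point of the long orbit $\Delta$). Since $\sigma\notin G$, $\bar\sigma\notin G/L$, and $\bar\sigma^2=1$.

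To see that $\bar\sigma$ inverts $\bar\rho$, recall that we are in case (B) of Proposition \ref{3cases}, so $\sigma(x)=1/x$. As $w^M=x$, we get $\bar\sigma(w)^M=1/w^M$, and hence $\bar\sigma(w)=c/w$ for some $c$ with $c^M=1$. Therefore
\[
\bar\sigma\bar\rho\bar\sigma(w)=\bar\sigma\bar\rho\,(c/w)=\bar\sigma\bigl(c\,\xi^{-N/M}/w\bigr)=\xi^{-N/M}w=\bar\rho^{-1}(w).
\]
So $\bar\sigma\bar\rho\bar\sigma=\bar\rho^{-1}$, and $H/L=\langle\bar\rho,\bar\sigma\rangle$ is dihedral of order $2M$. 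The case $M=2$ does not arise, since $M>2$.

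\emph{Main obstacle.} The delicate point is faithfulness: $H/L$ must act faithfully on $\K(w)$. This holds because $L$ is exactly the kernel of the restriction map, as $\K(\cX)^L=\K(w)$ by Galois correspondence. It is also worth checking that $\bar\sigma$ preserves $\K(w)$; this follows from the normality of $L$ established in Step 1.
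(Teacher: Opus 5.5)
Your proof is correct, but it takes a different route from the paper's.

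\textbf{The paper's route.} The argument there is geometric. Since $L$ fixes $P_1$, $Q_1$ and the $M$ points of $\Omega$, the Riemann--Hurwitz formula gives $N-M-2=|L|(2g(\cX/L)-2)+(M+2)(|L|-1)$, which forces $g(\cX/L)=0$. The paper then reads off the orbit structure of $H/L$ on the quotient line: one short orbit of length $2$, and two short orbits of length $M$ (the images of $\Omega$ and $\Delta$) whose points are fixed by involutions. Finally it invokes Dickson's classification of finite subgroups of $\mathrm{PGL}(2,\K)$ (\cite[Theorem 11.91]{HKT}) to identify $H/L$ as dihedral.

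\textbf{Your route.} You compute the fixed field explicitly, $\K(\cX)^L=\K(w)$ with $w^M=x$. Using $\sigma(x)=1/x$ from case (B) of Proposition \ref{3cases}, you write $H/L$ concretely as $\langle w\mapsto \xi^{N/M}w,\ w\mapsto c/w\rangle$. You then verify the dihedral relation by direct computation and check faithfulness through the Galois correspondence.

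\textbf{What each buys.} Your version is more elementary and self-contained: it needs no Riemann--Hurwitz count and no appeal to Dickson's theorem, whose application in the paper is rather terse. It also yields a little more. Every $\eta\in H\setminus G$ satisfies $\eta(x)=1/x$, so the same computation gives $\bar\eta(w)=c/w$. Hence every element of $H/L$ outside $G/L$ is an involution inverting $\bar\rho$, which is exactly what Lemma \ref{qadr} extracts afterwards. The paper's argument, by contrast, uses only fixed-point and orbit data. It would therefore carry over to any model in which the fixed field of $L$ is not explicitly available, at the cost of relying on the heavier classification theorem.
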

\begin{proof}
First, observe that $L$ is a characteristic subgroup of $G$, hence it is normal in $H$.
Moreover, $L$ fixes the two points fixed by $G$ and also each point in the orbit $\Omega$ of size $M$ under the action of $G$.
By the Riemann-Hurwitz genus formula, together with (i) of Theorem \ref{recap},
$$
N-M-2=2g(\cX)-2=|L|(2g(\cX/L)-2)+2(|L|-1)+M(|L|-1),
$$
which implies that $\cX/L$ is rational.
The factor group $H/L$ acts on the projective line as follows:
\begin{itemize}
\item has a short orbit of order $2$;
\item has two short orbits of order $M$, namely the images of $\Omega$ and $\Delta$ under the projection $\mathcal X\to \mathcal X/L$;
\item every point in these two short orbits is fixed by some involution in $H/L$.
\end{itemize}
Then the claim follows from \cite[Theorem 11.91]{HKT}.
\end{proof}

\begin{cor}\label{semidiretto} If $N/M$ is coprime to $2M$, then $H$ is a semidirect product of a cyclic group of order $N/M$ and a dihedral group of order $2M$.
\end{cor}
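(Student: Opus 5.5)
The plan is to apply the Schur--Zassenhaus theorem to the normal subgroup $L$ of Lemma \ref{24stab}, and then to show that the resulting complement is dihedral by identifying it with $H/L$.

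\textbf{Step 1 (normality).} As in the proof of Lemma \ref{24stab}, $L$ is the unique subgroup of order $N/M$ of the cyclic group $G$. It is therefore characteristic in $G$. Since $G$ has index $2$ in $H$, $G$ is normal in $H$, and hence $L\trianglelefteq H$.

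\textbf{Step 2 (a complement exists).} The hypothesis that $N/M$ is coprime to $2M$ gives
\[
\gcd\bigl(|L|,|H:L|\bigr)=\gcd\bigl(N/M,2M\bigr)=1 .
\]
So $L$ is a normal Hall subgroup of $H$, and the Schur--Zassenhaus theorem provides a subgroup $K\le H$ with $K\cap L=1$ and $KL=H$. Consequently $H=L\rtimes K$.

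\textbf{Step 3 (the complement is dihedral).} The natural projection $H\to H/L$ restricted to $K$ is injective, because $K\cap L=1$. It is surjective, because $|K|=2M=|H/L|$. Hence $K\cong H/L$, and Lemma \ref{24stab} says $H/L$ is dihedral of order $2M$. This proves the corollary.

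\textbf{Main obstacle.} None of the steps is hard; the only point needing care is the existence of the complement. Schur--Zassenhaus needs no solvability hypothesis in this situation, since $L$ is cyclic and hence abelian. If one wants to avoid the theorem altogether, the complement can be written down explicitly. Let $C$ be the subgroup of order $M$ of $G$, so that $G=L\times C$ because $\gcd(N/M,M)=1$. Take the involution $\sigma$ fixing a point of $\Delta$, and set $K=\langle C,\sigma\rangle$. Then $K$ normalizes $C$, $\sigma\notin G$, and $|K|=2M$. Since $|L|=N/M$ is odd and coprime to $|K|=2M$, we get $K\cap L=1$, and $K\cong H/L$ is dihedral as in Step 3.
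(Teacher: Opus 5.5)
Your proposal is correct and is essentially the paper's proof, which consists of applying the Schur--Zassenhaus theorem to the normal subgroup $L$ of Lemma \ref{24stab} (a normal Hall subgroup of index $2M$) and identifying the complement with the dihedral quotient $H/L$. Your optional explicit complement $\langle C,\sigma\rangle$ is also valid: $C$ is characteristic in $G\trianglelefteq H$, so $\sigma$ normalizes it. That construction simply makes the Schur--Zassenhaus step concrete in this abelian, coprime situation.
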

\begin{proof} The claim is a consequence of Lemma \ref{24stab} and the Schur-Zassenhaus Theorem (see e.g. \cite[A.5(XI)]{HKT}).
\end{proof}

\begin{lem}\label{qadr} Let $L$ be as in Lemma \ref{24stab} and let $\rho$ be a generator of $G$. Then for any $\eta \in H\setminus G$ there exists $l\in \{0,\ldots, |L|-1\}$ such that
$
\eta^2=\rho^{lM}.
$
    \end{lem}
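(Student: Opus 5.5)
The plan is to show two things about $\eta^2$: that it lies in $G$, and that its image in $H/L$ is trivial. Together these give $\eta^2\in L$.

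\textbf{Step 1: $\eta^2\in G$.} Since $G$ has index $2$ in $H$, the quotient $H/G$ has order $2$. Hence for any $\eta\in H\setminus G$ the coset $\eta^2G$ is trivial, that is, $\eta^2\in G$.

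\textbf{Step 2: the image of $\eta^2$ in $H/L$ is trivial.} By Lemma \ref{24stab}, $L$ is normal in $H$ and $H/L$ is dihedral of order $2M$. The image $G/L$ is the cyclic subgroup of order $M$ of this dihedral group. Since $\eta\notin G$ and $G$ contains $L$, the coset $\eta L$ lies outside $G/L$.

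\begin{itemize}
\item If $M>2$, the cyclic subgroup of order $M$ is the unique cyclic subgroup of index $2$ in the dihedral group. Every element outside it is a reflection, so $(\eta L)^2=L$.
\item The case $M\le 2$ is excluded, because Proposition \ref{3cases} gives $M>2$. This matters: when $M=2$ the dihedral group of order $4$ is the Klein group, which has several cyclic subgroups of index $2$, and the identification of $G/L$ with the rotation subgroup could fail.
\end{itemize}

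\textbf{Step 3: conclusion.} From Step 2, $\eta^2\in L$. Since $L$ is the cyclic subgroup of $G$ of order $N/M$, it is generated by $\rho^M$. Therefore $\eta^2=\rho^{lM}$ for some $l\in\{0,\ldots,|L|-1\}$, as claimed.

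\textbf{Main obstacle.} The only delicate point is justifying that $G/L$ is exactly the rotation subgroup of $H/L$. An alternative route avoids the group-theoretic identification: use the action on $\mathcal X/L$, where $G/L$ acts as a cyclic group of order $M$ fixing the two points below the $G$-fixed points. Then $\eta L$ swaps those two points, and an element of $\mathrm{PGL}(2,\mathbb K)$ in a dihedral group that swaps the two poles of the cyclic part is an involution.
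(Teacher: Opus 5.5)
Your proposal is correct and follows the paper's argument. By Lemma~\ref{24stab}, $H/L$ is dihedral; $\eta L$ lies outside the cyclic subgroup $G/L$ of order $M$, so $(\eta L)^2=L$ and hence $\eta^2\in L=\langle\rho^M\rangle$. Your explicit use of $M>2$ to identify $G/L$ with the rotation subgroup spells out a step the paper leaves implicit. That caveat is not actually needed, since for $M\le 2$ every element of the dihedral group of order $2$ or $4$ squares to the identity anyway, and your Step 1 is redundant.
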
 
\begin{proof}
By Lemma \ref{24stab} $H/L$ is dihedral and as $\eta L$ does not belong to the cyclic subgroup of $H/L$ of order $M$, we have 
$(\eta L)^2=L$. Therefore $\eta^2$ belongs to $L$ and the claim follows.
\end{proof}

As a consequence, we have the following result. 

\begin{thm}\label{abgr} 
The group $H$ has the following description as an abstract group:
$$
H=\langle \rho, \sigma \mid \rho^N=\sigma^2=1,\sigma \rho \sigma= \rho^{lM-1}\rangle,
$$
for some $l\in 
\{1,\ldots, \frac{N}{M}-1\}$ such that
\begin{itemize}
    \item  $GCD(N,lM-1)=1$,
    \item  $GCD(N,lM)=M$, and
    \item  $lM\equiv 2 \pmod{N/M}$.
\end{itemize}
In particular, either $GCD(M,N/M)=1$ or  $GCD(M,N/M)=2$.
\end{thm}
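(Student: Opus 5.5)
The plan is to read every condition off two pieces of information: the conjugation action of the involution $\sigma$ on $G$, and the stabilizer in $H$ of a point of the short orbit $\Omega$. Here $\sigma$ is the involution fixing a point of the long orbit $\Delta$, as fixed before Proposition \ref{3cases}.

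\emph{Step 1: the presentation.} Since $G$ is normal in $H$, we have $\sigma\rho\sigma=\rho^k$ for some $k$ with $\gcd(N,k)=1$. Apply Lemma \ref{qadr} to $\eta=\sigma\rho\in H\setminus G$:
\[
(\sigma\rho)^2=\sigma\rho\sigma\,\rho=\rho^{k+1}=\rho^{lM}
\]
for some $l$. Hence $k\equiv lM-1\pmod N$, and $\gcd(N,lM-1)=\gcd(N,k)=1$. The abstract group $\langle \rho,\sigma\mid \rho^N=\sigma^2=1,\ \sigma\rho\sigma=\rho^{lM-1}\rangle$ has order at most $2N$, because $\langle\rho\rangle$ is normal in it of index at most $2$. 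It surjects onto $H$, which has order $2N$, so the two groups are isomorphic.

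\emph{Step 2: the stabilizer of $R\in\Omega$.} By the analysis in Proposition \ref{3cases} (case (B)), $H$ preserves $\Omega$, and $G$ is transitive on $\Omega$. Therefore $\Omega$ is an $H$-orbit of size $M$, and the stabilizer $H_R$ has order $2N/M$. Since $p\nmid 2N$, Theorem \ref{stabilizer} shows that $H_R$ is cyclic. The group $G_R$ has order $N/M$, so $G_R=L$, and $H_R\not\subseteq G$. Thus $H_R=\langle\eta\rangle$ with $\eta=\sigma\rho^j\in H\setminus G$ for a suitable $j$.

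\emph{Step 3: reading off the remaining conditions.} Two consequences follow from $H_R=\langle\eta\rangle$.
\begin{itemize}
\item $\eta^2$ generates $L$. A direct computation gives $\eta^2=\sigma\rho^j\sigma\rho^j=\rho^{j(k+1)}=\rho^{jlM}$. This element has order $N/M$ exactly when $\gcd(jl,N/M)=1$. In particular $\gcd(l,N/M)=1$, which is equivalent to $\gcd(N,lM)=M$. It also shows that $l\not\equiv 0 \pmod{N/M}$, so $l$ may be chosen as a representative in $\{1,\dots,N/M-1\}$. This choice is legitimate because changing $l$ by a multiple of $N/M$ changes $lM$ by a multiple of $N$.
\item $\eta$ centralizes $L$, since $H_R$ is cyclic and contains $L$. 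Because $\rho$ also centralizes $L$, so does $\sigma=\eta\rho^{-j}$. Then $\sigma\rho^M\sigma=\rho^{M}$ gives $kM\equiv M\pmod N$, that is, $lM-1\equiv 1\pmod{N/M}$, which is $lM\equiv 2 \pmod{N/M}$.
\end{itemize}
Finally, $d=\gcd(M,N/M)$ divides both $lM$ and $lM-2$, hence $d\mid 2$.

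The main obstacle is Step 2: justifying that $H$ stabilizes $\Omega$ and that the point stabilizer $H_R$ is cyclic of order $2N/M$ rather than contained in $G$. I plan to take this directly from the case (B) analysis together with tameness.
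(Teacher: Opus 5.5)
Your proof is correct, and after the presentation step it takes a different route from the paper's.

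\textbf{The paper's route.} The paper obtains $\gcd(N,lM)=M$ by contradiction. If $\gcd(N,lM)=vM$ with $v>1$, then $V=\langle\rho^{vM}\rangle$ is normal in $H$, and $H/V$ is dihedral of order $2vM$ because $\sigma\rho\sigma\rho V=\rho^{lM}V=V$. Riemann--Hurwitz gives $g(\mathcal X/V)=(v-1)M/2\ge 2$, and $2vM>4g(\mathcal X/V)+4$ then contradicts Theorem \ref{nd}; both of these steps use $M>2$. The congruence comes afterwards from $\sigma^2=1$: one has $\rho=\rho^{(lM-1)^2}$, so $N\mid lM(lM-2)$, and $l$ is cancelled using $\gcd(l,N/M)=1$.

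\textbf{Your route.} You extract both conditions from a single local object, the stabilizer $H_R$ of a point of the $H$-invariant short orbit $\Omega$. It is cyclic of order $2N/M$ by tameness and contains $L$ with index $2$. From $\eta^2$ generating $L$ you get $\gcd(l,N/M)=1$. From $\eta$ centralizing $L$ you get that $\sigma$ centralizes $L$, which yields $lM\equiv 2\pmod{N/M}$ directly, without invoking $\sigma^2=1$ or the gcd condition.

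\textbf{What each buys.} Your argument avoids Theorem \ref{nd} entirely. In the tame case the paper justifies that theorem only through Grothendieck's tame lifting theorem plus the complex result. You also avoid the genus computation for $\mathcal X/V$ and the use of $M>2$ at that point. The only geometric input you need is that tame point stabilizers are cyclic (Theorem \ref{stabilizer}). As a by-product, you show that $H$ contains a cyclic subgroup of order $2N/M$ and that $\sigma$ commutes with $L$. The paper's argument instead illustrates Theorem \ref{nd} as a tool and gets the congruence from purely group-theoretic relations.

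Finally, your Step 2 is not really an obstacle. $\Omega$ is the only $G$-orbit of size $M$, and $H$ normalizes $G$, so $H$ permutes $G$-orbits preserving their sizes and must fix $\Omega$. You do not need to appeal to the case analysis of Proposition \ref{3cases}.
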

\begin{proof} The description of $H$ follows from Lemma \ref{qadr} applied to $\eta=\sigma\rho$, taking into account that for $l=0$ the group $H$ would be a dihedral group. As $\sigma \rho \sigma$ must be a generator of $\langle \rho \rangle$, condition $GCD(N,lM-1)=1$ holds. 
To prove that $GCD(N,lM)=M$, assume by contradiction that $GCD(N,lM)=vM$ for some $v>1$. Let $V$ be the subgroup of $H$ generated by $\rho^{vM}$. As $V$ is a characteristic subgroup of $G$, we have that $V$ is normal in $H$. The factor group $H/V$ has order $2vM$. We show that $H/V$ is a dihedral group. The element $\bar \rho=\rho V$ has order $vM$ and $\bar \sigma=\sigma V$
is an involution. Also,
$$
\bar \sigma \bar \rho \bar \sigma \bar \rho =\sigma\rho\sigma\rho V=\rho^{lM}V=V.
$$

We compute the genus $\bar g$ of the quotient curve $\cX/V$. By the Riemann-Hurwitz genus formula, taking into account that $V$ fixes $M+2$ points and acts semiregularly on the other points of $\cX$, we have
$$
N-M-2=\frac{N}{vM}(2\bar g-2)+(M+2)\left(\frac{N}{vM}-1\right);
$$
whence,
$
vM=2\bar g +M
$
and $\bar g=(v-1)M/2$.
As $M>2$, we have $\bar g\ge 2$. Also, $H/V$ is a dihedral group of size $2vM>4\bar g+4=2(v-1)M+4$ acting on a curve of genus $\bar g=(v-1)M/2$, a contradiction to Theorem \ref{nd}.

In order to prove $lM-2\equiv 0 \pmod{N/M}$, observe that
$$
\rho=\sigma (\sigma \rho \sigma) \sigma=\rho^{(lM-1)^2},
$$
which yields that $N$ divides 
$(lM-1)^2-1=lM(lM-2)$. As $GCD(N/M,l)=1$, this is equivalent to $N/M$ dividing $lM-2$. Finally,  by $lM\equiv 2 \pmod{N/M}$, $GCD(M,N/M)$ must divide $2$, which completes the proof. 
\end{proof}

Theorem \ref{abgr} has the following series of consequences.

\begin{thm}\label{tame-normal-form}
Let $\cX$ be a curve of genus $g(\cX)\geq 2$ with an automorphism group of size greater than $4g(\cX)+4$ and coprime with $p$, admitting a cyclic subgroup of index $2$. Then, up to birational equivalence, $\cX$ is defined by
\begin{equation}\label{esplicita}
y^{k\frac{2g(\cX)}{k-1}}=x(1+x)^{v\frac{2g(\cX)}{k-1}},
\end{equation}
where $k$ and $v$ satisfy $1<k<g(\cX)+1$, $k-1\mid 2g(\cX)$, $v\in \{1,\ldots, k-1\}$, $GCD(2g(\cX),k)\leq 2$, and $GCD(k,v)=1$. 
\end{thm}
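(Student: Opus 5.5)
We combine the normal form obtained in this section with the arithmetic of Theorem \ref{abgr}. By Proposition \ref{3cases} and the Corollary after it, $\cX$ is birationally equivalent to $\mathcal X(N,1,s)$ with $M_1=M_3=1$ and $M=M_2=GCD(N,s)>2$. Theorem \ref{recap}(i) gives $2g-2=N-M-2$, that is, $N-M=2g$. Theorem \ref{abgr} gives $GCD(N,lM)=M$, so in particular $M\mid N$.

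The plan is to set $k:=N/M$, an integer with $k\ge 2$ since $M<N$. Then $N-M=(k-1)M=2g$, so $k-1\mid 2g$, $M=\frac{2g}{k-1}$ and $N=k\frac{2g}{k-1}$. This is the exponent on $y$ in \eqref{esplicita}.

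Since $M=GCD(N,s)$, we can write $s=vM$ with $GCD(v,k)=1$ (because $GCD(N/M,s/M)=1$). From $1\le s<N$ we get $1\le v\le k-1$, which gives the exponent $v\frac{2g}{k-1}$ on $1+x$. The condition $v\ge1$ together with $GCD(k,v)=1$ is exactly what $GCD(N,s)=M$ requires.

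It remains to derive the two inequalities.
\begin{itemize}
\item The condition $k<g+1$ is equivalent to $M>2$. Indeed, $M=2g/(k-1)>2$ if and only if $k-1<g$. So it comes from Proposition \ref{3cases}. Equivalently, $|H|=2N=2kM>4g+4=2(k-1)M+4$ if and only if $M>2$, i.e.\ it is the hypothesis \eqref{condizioneQD}.
\item The condition $GCD(2g,k)\le 2$ follows from the last assertion of Theorem \ref{abgr}, namely $GCD(M,N/M)\in\{1,2\}$. Since $2g=(k-1)M$ and $GCD(k,k-1)=1$, we have $GCD(2g,k)=GCD(M,k)=GCD(M,N/M)$, which is at most $2$.
\end{itemize}

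The main point needing care is bookkeeping rather than a genuine obstacle. The reduction to $r=1$ via the Lemma changes $s$ to $w$, and we must make sure that after it $GCD(N,w)$ is still $M$ and the roles $M_1=M_3=1$ are preserved. This holds because the map $u=x$, $v=y^h/(\cdots)$ preserves the three short orbits and their sizes: the places over $x=-1$ still number $GCD(N,w)=GCD(N,sh)=GCD(N,s)$, since $h$ is invertible mod $N$. Once this is checked, the statement is a direct translation of $N=kM$, $2g=(k-1)M$, $s=vM$.
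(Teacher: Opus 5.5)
Your proposal is correct and follows essentially the same route as the paper. You set $k=N/M$ and $v=s/M$, read off $k-1\mid 2g$ and $k<g+1$ from $2g=(k-1)M$ and $M>2$, and obtain $\gcd(2g,k)=\gcd(M,N/M)\le 2$ from Theorem \ref{abgr}; your extra check that the reduction to $r=1$ preserves $M_1=M_3=1$ and $\gcd(N,s)$, because $h$ is invertible modulo $N$, is a detail the paper leaves implicit.
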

\begin{proof}
By Proposition \ref{3cases}, the curve is isomorphic to a curve $\cX(N,1,s)$ with $M=GCD(N,s)>2$ and $GCD(N,s+1)=1$; also, $2g(\cX)=N-M$. Let $k=\frac{N}{M}$ and $v=\frac{s}{M}$. Then $k\geq 2$, $v\in \{1,\ldots,k-1\}$, and by $GCD(N,s)=M$ we have $GCD(k,v)=1$.
Also, $2g(\cX)=(k-1)M$, shows that $k-1$ is a divisor of $2g(\cX)$, whereas $M>2$ yields $k<g(\cX)+1$. Moreover, by $M=2g(\cX)/(k-1)$ we have that \eqref{esplicita} is an equation defining $\cX(N,1,s)$.
Finally, as $N/M=k$, Theorem \ref{abgr} yields $
GCD(k,2g/(k-1))\leq 2$. The claim follows from the fact that $GCD(k,2g/(k-1))=GCD(k,2g)$.
\end{proof}

\begin{cor}
If $N/M$ is odd, then $H$ is a semidirect product of a cyclic group of order $N/M$ and a dihedral group of order $2M$.
\end{cor}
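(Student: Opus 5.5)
The plan is to reduce to Corollary \ref{semidiretto}, whose hypothesis is that $N/M$ is coprime to $2M$. If $N/M$ is odd, then $N/M$ is automatically coprime to $2$. So everything comes down to showing $GCD(M,N/M)=1$.

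For this I use the last assertion of Theorem \ref{abgr}: either $GCD(M,N/M)=1$ or $GCD(M,N/M)=2$. The second alternative forces $2\mid N/M$, which contradicts the assumption that $N/M$ is odd. Hence $GCD(M,N/M)=1$, and together with oddness this gives $GCD(N/M,2M)=1$.

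Now Corollary \ref{semidiretto} applies directly. Recall what sits behind it. By Lemma \ref{24stab}, the subgroup $L\le G$ of order $N/M$ is normal in $H$ and $H/L$ is dihedral of order $2M$. Since $|L|=N/M$ is coprime to $|H/L|=2M$, the Schur--Zassenhaus theorem gives a complement $K\cong H/L\cong D_{M}$. Therefore $H=L\rtimes K$, a semidirect product of a cyclic group of order $N/M$ by a dihedral group of order $2M$.

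There is no real obstacle; the only point needing care is the divisibility dichotomy at the end of Theorem \ref{abgr}. That dichotomy comes from $lM\equiv 2\pmod{N/M}$: any common divisor of $M$ and $N/M$ must divide $2$. It is proved in the paper, so it can simply be quoted. Note also that $M\ge 3$, by Proposition \ref{3cases}, so the dihedral group of order $2M$ is a genuine dihedral group, and nothing further needs to be checked.
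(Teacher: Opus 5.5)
Your proof is correct and matches the paper's argument: the dichotomy $GCD(M,N/M)\in\{1,2\}$ from Theorem \ref{abgr}, combined with $N/M$ odd, gives $GCD(N/M,2M)=1$. Corollary \ref{semidiretto}, which rests on Lemma \ref{24stab} and Schur--Zassenhaus, then yields the semidirect product decomposition.
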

\begin{proof}
 By Theorem \ref{abgr} and the assumption that $N/M$ is odd, we have $GCD(N/M,2M)=1$. The claim follows from Corollary \ref{semidiretto}. 
\end{proof}

\begin{thm}\label{14settembre}   Let $\cX$ be a curve with an automorphism group of size greater than $4g(\cX)+4$ admitting a cyclic subgroup of index $2$. Then, up to birational equivalence, $\cX=\cX(N,1,s)$ with
 $M=GCD(N,s)>2$ and $GCD(N,s+1)=1$, and there exists an integer $t$ such that $GCD(N,t)=M$, $GCD(N,t+1)=1$ and $t^2+2t\equiv 0 \pmod N$.
\end{thm}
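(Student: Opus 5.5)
The first part of the statement is already established. By Proposition \ref{3cases} and the Corollary allowing $r=1$, we may assume $\cX=\cX(N,1,s)$ with $M_1=M_3=1$ and $M=M_2=GCD(N,s)>2$. Since $M_3=GCD(N,s+1)=1$, this gives the first two conditions. Moreover, by the proof of Proposition \ref{3cases}, case (B) must occur for every $\eta\in H\setminus G$, so $\eta(x)=1/x$.

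The remaining task is to produce $t$. The natural candidate is $t=lM-2$, where $l$ is as in Theorem \ref{abgr}, so that $t+1=lM-1$. We then check the three conditions in turn.

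\textbf{Coprimality of $t+1$.} By Theorem \ref{abgr}, $GCD(N,lM-1)=1$, so $GCD(N,t+1)=1$.

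\textbf{The congruence.} We have $t^2+2t=t(t+2)=(lM-2)lM$. In the proof of Theorem \ref{abgr} it was shown that $N\mid lM(lM-2)$, coming from $\rho=\rho^{(lM-1)^2}$. Hence $t^2+2t\equiv 0\pmod N$.

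\textbf{The condition $GCD(N,t)=M$.} This is the delicate step. We know $GCD(N,lM)=M$ and $N/M\mid lM-2$.

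First, $M$ divides $t=lM-2$ only if $M\mid 2$, which fails since $M>2$. So the literal choice $t=lM-2$ does not have $GCD(N,t)=M$. The roles must be adjusted, and the right choice is $t=-lM \bmod N$, that is, $t\equiv -lM$. Then:
\begin{itemize}
\item $t+1\equiv-(lM-1)$, so $GCD(N,t+1)=1$;
\item $GCD(N,t)=GCD(N,lM)=M$;
\item $t^2+2t=lM(lM-2)\equiv 0\pmod N$.
\end{itemize}
So $t\equiv -lM \pmod N$, taken in $\{0,\dots,N-1\}$, works directly from Theorem \ref{abgr}.

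\textbf{Expected obstacle.} The main point needing care is justifying that the relation $\sigma\rho\sigma=\rho^{lM-1}$ from Theorem \ref{abgr} is available with $\rho$ the specific generator of \eqref{defro}. This holds because the theorem is stated for any generator $\rho$ of $G$.

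A more geometric alternative would compute $\sigma(y)=x^a(1+x)^b/y^{c}$ explicitly from the divisor $\div(\sigma(y)/y)=(2+s)(Q_1-P_1)$. Comparing $\sigma(y)^N$ with $\sigma(x)(1+\sigma(x))^s$ would then give $t=s$ or a related exponent satisfying $s^2+2s\equiv0$. This route is only needed if one wants $t=s$ itself; the statement asks only for existence, so the algebraic choice $t=-lM$ suffices.
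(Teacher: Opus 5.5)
Your proposal is correct and follows the same route as the paper: your final choice $t\equiv -lM \pmod N$ is exactly the paper's $t:=N-lM$ with $l$ from Theorem~\ref{abgr}, and your check of the three conditions via $\gcd(N,lM)=M$, $\gcd(N,lM-1)=1$ and $N\mid lM(lM-2)$ spells out what the paper's one-line proof leaves implicit. The discarded attempt $t=lM-2$, the concern about which generator $\rho$ is used (the conclusion depends only on $N$, $M$ and $l$), and the speculative geometric alternative are all unnecessary and can be dropped.
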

\begin{proof}
Let $l$ be as in Theorem \ref{abgr} and  $t:=N-lM$. The claim follows from Theorem \ref{abgr}.
\end{proof}


The following result extends Examples \ref{gHERM} and \ref{gH2}, and provides a sufficient condition. 

\begin{prop}\label{espliciti} 
Let $N$ and $s$ be two natural numbers such that $p\nmid 2N$, $\gcd(N,s)>2$, and $\gcd(N,s+1)=1$. If $N$ divides $s^2+2s$, then $\cX(N,1,s)$ is a curve of genus
$g=\frac{N-\gcd(N,s)}{2}$ with an automorphism group $H$ of order $2N>4g+4$ which contains a cyclic subgroup of index $2$.
\end{prop}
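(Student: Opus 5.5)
The plan is to compute the genus with Theorem \ref{recap}(i), and then to construct the involution explicitly on the model of Examples \ref{gHERM} and \ref{gH2}, which by Proposition \ref{3cases} should be of type (B), i.e.\ $\sigma(x)=1/x$.

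\textbf{Genus and the order inequality.} For $r=1$ we have $M_1=\gcd(N,1)=1$, $M_2=\gcd(N,s)=:M>2$ and $M_3=\gcd(N,s+1)=1$. Theorem \ref{recap}(i) then gives
\[
2g-2=N-M-2,
\]
so $g=(N-M)/2$. The cyclic group $G=\langle\rho\rangle$ of \eqref{defro} has order $N$. Since $M>2$, we get $2N=4g+2M>4g+4$, so only the construction of $\sigma$ remains.

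\textbf{Construction of $\sigma$.} Following Example \ref{gHERM} (there $s=m-1$, $N=m^2-1$, $\sigma(y)=(x+1)/y^m$ with $m=s+1$), I will try
\[
\sigma(x)=\frac1x,\qquad \sigma(y)=\frac{(x+1)^{a}}{x^{b}\,y^{s+1}},
\]
with integers $a,b$ to be determined. The requirement is $\sigma(y)^N=\sigma(x)(1+\sigma(x))^s=(1+x)^s/x^{s+1}$.

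Substituting $y^{N(s+1)}=x^{s+1}(1+x)^{s(s+1)}$, this becomes
\[
(x+1)^{aN-s(s+1)^2}x^{-bN-(s+1)^2}=(1+x)^s x^{-(s+1)},
\]
that is,
\[
aN=s(s+1)^2+s=s(s^2+2s+2),\qquad bN=-(s+1)^2+(s+1)=-s(s+1).
\]
These are not integral in general, so the naive exponents must be adjusted. Following Theorem \ref{abgr}, I would instead take $\sigma(y)=(x+1)^a x^{b} y^{e}$ with $e\equiv -(s+1)\pmod N$ (recall $\sigma\rho\sigma=\rho^{lM-1}$ and $t=N-lM=s$). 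The equations then become
\[
aN+es=s\cdot\text{const},\qquad bN+e=-1,
\]
up to the exact constants. The hypothesis $N\mid s^2+2s$ is precisely what makes $e=-(s+1)+\text{multiple of }N$ solve both: since $-(s+1)s\equiv s \pmod N$, the exponent of $(1+x)$ matches modulo $N$, and the exponent of $x$ works out after choosing $b$.

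Concretely, I would take $e=-(s+1)$ and absorb the integer quotients
\[
c=\frac{s^2+2s}{N}
\]
into $a$ and $b$. Checking that $\sigma(y)^N$ equals the right-hand side is then a direct exponent comparison.

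\textbf{Group structure.} With $\sigma$ in hand, I would verify two identities.

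First, $\sigma^2=1$. We have $\sigma^2(x)=x$, and $\sigma^2(y)$ equals $y^{(s+1)^2}$ times a monomial in $x$ and $1+x$. Using $(s+1)^2\equiv 1\pmod N$, which is equivalent to $N\mid s^2+2s$, and the curve equation, this monomial should collapse to $y$.

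Second, $\sigma\rho\sigma=\rho^{-(s+1)}$. Indeed $\sigma\rho\sigma(y)=\xi^{-(s+1)}y$ up to the above bookkeeping. Because $\gcd(N,s+1)=1$, the element $\rho^{-(s+1)}$ is again a generator of $G$, so $\sigma$ normalizes $G$.

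Hence $H=\langle\rho,\sigma\rangle$ has order $2N$ and contains $G$ with index $2$, provided $\sigma\notin G$. This holds because $\sigma$ acts nontrivially on $x$.

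\textbf{Main obstacle.} The main obstacle is getting the exact exponents $a,b$ in $\sigma(y)$ so that both $\sigma(y)^N$ and $\sigma^2(y)$ simplify correctly. I would first fix $e=-(s+1)$ and solve for $a,b$ using $c=(s^2+2s)/N$, and if that fails, let $e$ vary in the residue class $-(s+1)\bmod N$.
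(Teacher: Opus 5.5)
Your overall plan matches the paper's: compute the genus from Theorem \ref{recap}(i), note $2N=4g+2M>4g+4$, and exhibit an involution of type (B) with $\sigma(x)=1/x$, $\sigma(y)$ a monomial involving $y^{-(s+1)}$, and $\sigma\rho\sigma=\rho^{-(s+1)}$. However, the step that \emph{is} the proof, namely writing down $\sigma$ and checking it, is never completed. The computation you do give contains an arithmetic slip that leads you to discard the correct ansatz. From $y^{N(s+1)}=x^{s+1}(1+x)^{s(s+1)}$, which you wrote correctly, one gets
\[
\sigma(y)^N=(x+1)^{aN-s(s+1)}\,x^{-bN-(s+1)}.
\]
The exponents are not $aN-s(s+1)^2$ and $-bN-(s+1)^2$: you multiplied by $s+1$ twice. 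So the actual conditions are $aN=s^2+2s$ and $b=0$, and these are solvable in integers exactly because $N\mid s^2+2s$. Your second system has the same problem: the $x$-exponent condition is $bN+e=-(s+1)$, not $bN+e=-1$, and with $e=-(s+1)$ it again forces $b=0$. As written, the proposal ends with ``up to the exact constants'', ``should collapse'', and ``if that fails, let $e$ vary''. No automorphism is actually produced or verified, so this is a genuine gap.

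The repair is immediate and gives exactly the paper's construction. Set $c=(s^2+2s)/N$, and take $\sigma(x)=1/x$, $\sigma(y)=(x+1)^c/y^{s+1}$. Then
\[
\sigma(y)^N=\frac{(x+1)^{s^2+2s}}{x^{s+1}(x+1)^{s^2+s}}=\frac{(1+x)^s}{x^{s+1}}=\sigma(x)\bigl(1+\sigma(x)\bigr)^s .
\]
Using $y^{cN}=x^c(1+x)^{cs}$, you get
\[
\sigma^2(y)=\frac{(x+1)^c}{x^c}\cdot\frac{y^{(s+1)^2}}{(x+1)^{c(s+1)}}=\frac{y^{(s+1)^2}}{y^{cN}}=y .
\]
Finally, $\sigma\rho\sigma(y)=\xi^{-(s+1)}y$, so $\sigma\rho\sigma=\rho^{N-s-1}$, which generates $G$ since $\gcd(N,s+1)=1$. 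The rest of your argument ($\sigma\notin G$, hence $|H|=2N$, and the genus count) then goes through unchanged.
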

\begin{proof}
Let $a=(s^2+2s)/N$ and let $\xi$ be a primitive $N$-th root of unity in $\mathbb K$. Let
$\rho$ and $\sigma$ be  defined as follows: 
\begin{itemize}
\item
$\rho(x)=x$, $\rho(y)=\xi y$;
\item $\sigma (x)=\frac{1}{x}$, $\sigma(y)=\frac{(x+1)^a}{y^{s+1}}$.
\end{itemize}
Clearly $\rho$ is an automorphism of $\cX(N,1,s)$ of order $N$. 
It is straightforward to check that $\sigma$ is an involution preserving $\cX(N,1,s)$. Indeed, $\sigma$ is an involution as $\sigma^2(x)=x$ and
$$
\sigma^2(y)=\sigma\left( \frac{(x+1)^a}{y^{s+1}}\right)=\left(\frac{(x+1)^a}{x^a}\right)\frac{y^{(s+1)^2}}{(x+1)^{a(s+1)}}=\frac{y^{(s+1)^2}}{y^{aN}}=y,
$$
it is an automorphism of $\cX(N,1,s)$ by
$$
\sigma(y)^N=\frac{(x+1)^{aN}}{y^{(s+1)N}}=\frac{(x+1)^{s^2+2s}}{x^{s+1}(x+1)^{s^2+s}},\qquad \sigma(x)(\sigma(x)+1)^s=\frac{1}{x}\left(\frac{1}{x}+1\right)^s=\frac{1}{x^{s+1}}(x+1)^s.
$$
Finally, it is straightforward to check that 
$$
\sigma \rho \sigma = \rho^{N-s-1}.
$$
\end{proof}

Combining Theorem \ref{14settembre} and Proposition \ref{espliciti}, we obtain the following description of the genera that occur for curves with an automorphism group of order greater than $4g+4$ and prime to $p$ admitting a cyclic subgroup of index $2$.

\begin{cor} For an integer $N> 6$ the genera $g\ge 2$ of curves with an automorphism group of size $2N>4g+4$ coprime with $p$ admitting a cyclic subgroup of index $2$ are precisely
the integers $\frac{N-M}{2}$ where $M$ is a divisor of $N$ such that $\gcd(M,N/M)\leq 2$, and such that there exists $v \in \left\{1,\ldots,\frac{N}{M}-1\right\}$ for which $\gcd(N,vM)=M$, $\gcd(N,vM+1)=1$ and $vM+2 \equiv 0 \pmod {N/M}$.
An example of a curve with that genus is
$
y^N=x(1+x)^{vM}.
$
In this case, the automorphisms in $H$ are explicitly described in Theorem \ref{espliciti}.
\end{cor}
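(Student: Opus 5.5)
The plan is to prove the two inclusions separately. Necessity comes from Theorem \ref{14settembre}, which is already essentially the "only if" direction; sufficiency comes from Proposition \ref{espliciti}, which is essentially the "if" direction. The only real work is translating between the parameter $t$ (or $s$) used there and the parameter $vM$ used in the statement, and checking that the genus and the size condition match.

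\emph{Necessity.} Let $\cX$ have genus $g\ge 2$ and an automorphism group $H$ of order $2N>4g+4$, coprime with $p$, with a cyclic subgroup of index $2$.

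1. By Theorem \ref{14settembre}, $\cX\cong\cX(N,1,s)$ with $M=\gcd(N,s)>2$. By Theorem \ref{recap}(i), using $M_1=M_3=1$ from Proposition \ref{3cases}, we get $2g=N-M$, so $g=\frac{N-M}{2}$.
2. Theorem \ref{abgr} gives $\gcd(M,N/M)\le 2$.
3. For the existence of $v$, take the integer $t$ from Theorem \ref{14settembre}. We have $\gcd(N,t)=M$, so $t=vM$ for some $v$. Reduce $t$ modulo $N$ so that $0<t<N$; all the conditions on $t$ depend only on $t \bmod N$. Then $v\in\{1,\ldots,N/M-1\}$.
4. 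The conditions $\gcd(N,vM)=M$ and $\gcd(N,vM+1)=1$ are exactly those on $t$.
5. For the congruence, $N\mid t(t+2)=vM(vM+2)$ gives $\frac{N}{M}\mid v(vM+2)$. Since $\gcd(N,vM)=M$, we have $\gcd(N/M,v)=1$, hence $N/M\mid vM+2$.

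\emph{Sufficiency.} Given $M$ and $v$ as in the statement, put $s=vM$ and apply Proposition \ref{espliciti} to $\cX(N,1,s)$.

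1. The hypotheses $\gcd(N,s)=M$ and $\gcd(N,s+1)=1$ are given.
2. Proposition \ref{espliciti} also requires $\gcd(N,s)>2$, i.e.\ $M>2$. This is where the assumptions $N>6$ and $g\ge2$ enter: from $2g=N-M$ and $2N>4g+4$ we get $2N>2N-2M+4$, i.e.\ $M>2$. So I read the statement with $M>2$ understood, equivalently with $g=\frac{N-M}{2}$ subject to $2N>4g+4$.
3. The remaining hypothesis is $N\mid s(s+2)=vM(vM+2)$. Since $M\mid vM$ and $\frac{N}{M}\mid vM+2$, the product is divisible by $M\cdot\frac{N}{M}=N$.
4. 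Proposition \ref{espliciti} then produces the group $H=\langle\rho,\sigma\rangle$ of order $2N$ with a cyclic subgroup of index $2$, and genus $\frac{N-M}{2}$. This also justifies the example $y^N=x(1+x)^{vM}$.

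The main obstacle is bookkeeping rather than substance. In necessity, step 5 needs the reduction $\frac{N}{M}\mid v(vM+2)\Rightarrow\frac{N}{M}\mid vM+2$, which uses $\gcd(v,N/M)=1$. The other delicate point is making sure $M>2$ (equivalently $g\ge2$ together with the strict inequality) is built into the characterization, since the literal divisor conditions in the statement do not by themselves exclude $M\le 2$.
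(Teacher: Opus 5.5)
Your proposal is correct and follows the paper's route: the paper obtains this corollary by combining Theorem \ref{14settembre} (resting on Theorem \ref{abgr}) for necessity with Proposition \ref{espliciti} applied to $s=vM$ for sufficiency. Your translation $t=vM$, the step $\gcd(v,N/M)=1\Rightarrow N/M\mid vM+2$, and your observation that $M>2$ must be read into the statement are exactly the bookkeeping the paper leaves implicit. The condition $M>2$ follows from $2N>4g+4$ together with $2g=N-M$, rather than from $N>6$.
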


In order to obtain further necessary conditions on the curves we are investigating, we can look at Weierstrass semigroups at certain points of $\cX$.

\begin{lem}\label{trenta} 
If $\cX=\cX(N,1,s)$, with $\gcd(N,s+1)=1$, then
$$
\div\left(\frac{\sigma(y)}{y}\right)=(s+2)(Q_1-P_1).
$$
Also, if $d=\gcd(N,s+2)$, then $d$ belongs to the Weierstrass semigroups at both $P_1$ and $Q_1$.

\end{lem}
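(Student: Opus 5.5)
The plan is to reuse the divisor computation of Case (B) in the proof of Proposition \ref{3cases}. There we showed that the only admissible configuration is the one in which $\sigma$ fixes $\Omega_2$ and swaps $\Omega_1=\{P_1\}$ with $\Omega_3=\{Q_1\}$, so that $\sigma(x)=1/x$. For $r=1$ this gives
$$
\left(\frac{\sigma(y)}{y}\right)^N=\frac{x^{-1}(1+x^{-1})^s}{x(1+x)^s}=x^{-(s+2)}.
$$
By Theorem \ref{recap}(v) with $M_1=M_3=1$, we have $\div(x)=N(P_1-Q_1)$. Hence $\div\bigl((\sigma(y)/y)^N\bigr)=N(s+2)(Q_1-P_1)$, and dividing by $N$ gives the first claim.

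For the Weierstrass statement, set $f=\sigma(y)/y$. Its only pole is at $P_1$, of order $s+2$, and its only zero is at $Q_1$. Also $x$ has its only pole at $Q_1$, of order $N$, and its only zero at $P_1$. Consider the functions $f^a x^b$ with $a,b\in\mathbb Z$.

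First, $f^a x^b$ has a pole only at $P_1$ provided $-a(s+2)+bN\le 0$ at $P_1$ and $a(s+2)-bN\ge 0$ at $Q_1$. These are the same condition. The pole order at $P_1$ is then $a(s+2)-bN$. By Bézout, choose $a,b$ with $a(s+2)-bN=d$. Then $f^a x^b$ has divisor $d(Q_1-P_1)$, so it is regular away from $P_1$ with a pole of order exactly $d$ there. Hence $d\in H(P_1)$.

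Symmetrically, the reciprocal $f^{-a}x^{-b}$ has divisor $d(P_1-Q_1)$, so it has a unique pole at $Q_1$ of order $d$. Hence $d\in H(Q_1)$.

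The main point needing care is the first step, namely justifying that $\sigma$ indeed acts as in Case (B), that is, $\sigma(x)=1/x$ and not some other involution of the quotient. This follows from the proof of Proposition \ref{3cases}: we placed the fixed short orbit at $\Omega_2$ (of size $M>2$), and the dihedral normalization pins down $\bar\sigma(t)=1/t$ on $\K(x)$.

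The hypothesis $\gcd(N,s+1)=1$ ensures $M_3=1$, which is what makes $Q_1$ a single place with $x$ having a pole of order $N$ there. Everything else is a direct divisor computation.
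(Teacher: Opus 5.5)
Your proposal is correct and follows the paper's own argument. The divisor formula comes from Case (B) of Proposition \ref{3cases}, where $\sigma(x)=1/x$ gives $(\sigma(y)/y)^N=x^{-(s+2)}$. The Weierstrass claim comes from a B\'ezout combination $x^{b}(\sigma(y)/y)^{a}$ with divisor $\pm d(P_1-Q_1)$; you are slightly more explicit than the paper in using the function and its reciprocal to cover both $P_1$ and $Q_1$.
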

\begin{proof}
The first part of the claim is a consequence of the proof of Proposition \ref{3cases}.
Moreover, let $\alpha, \beta$ be integers such that
$$
d=\alpha N-\beta (s+2).
$$
Recall that $\div\left(x\right)=N(P_1-Q_1)$.
Then
$$
\div \left( \left(x\right)^\alpha \left(\frac{\sigma(y)}{y}\right)^\beta\right)=d(P_1-Q_1),
$$
which proves the second part of the claim.
\end{proof}

By Lemma \ref{trenta} it is worth  investigating the Weierstrass semigroup at points $P_1,Q_1$.
Let
$$
A_N=\{(r,s) \in \mathbb Z^2 : r,s \ge 1 \text{ and }r+s\le N-1\}
$$
and
$$
A(r,s)=\{a \in \{1,\ldots, N-1\} \mid (ar \pmod N,as \pmod N)\in A_N\}.
$$

The following result can be deduced from \cite{sasaki}.

\begin{prop} Let $(r,s)\in A_N$ such that $\gcd(N,r)=1$. The map
$$\Psi: A(r,s)\longrightarrow Gap(P_1),$$
defined by $\Psi(a)=ar\pmod N$ is bijective.
\end{prop}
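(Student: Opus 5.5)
The idea is to realize the gaps at $P_1$ as the numbers $v_{P_1}(\omega)+1$, where $\omega$ ranges over a basis of holomorphic differentials of $\cX(N,r,s)$ with pairwise distinct orders at $P_1$. To build such a basis I would diagonalize the action of $G=\langle\rho\rangle$ on the space of holomorphic differentials. The eigenvectors have the form $h(x)\,y^a\,dx$ with $h\in\K(x)$ and $a\in\{1,\ldots,N-1\}$. The case $a=0$ can be discarded: it corresponds to differentials pulled back from $\cX/G$, which is rational by (vi) of Theorem \ref{recap}.

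For each $a$, I would take the candidate
$$
\omega_a=\frac{y^a\,dx}{x^{\lceil ar/N\rceil}(1+x)^{\lceil as/N\rceil}},
$$
and compute its divisor from (v) of Theorem \ref{recap}, using that $\div(dx)$ is supported on the short orbits $\Omega_1,\Omega_2,\Omega_3$ with coefficient $e-1$ at each ramified place. Since $\gcd(N,r)=1$, the place $P_1$ is totally ramified, and $v_{P_1}(x)=N$, $v_{P_1}(y)=r$, $v_{P_1}(dx)=N-1$. This gives
$$
v_{P_1}(\omega_a)=ar-N\lceil ar/N\rceil+N-1=(ar \bmod N)-1\ge 0.
$$
At the places $R_j$ over $(-1,0)$ the analogous computation gives a nonnegative order exactly when $as\not\equiv 0\pmod N$, and then the order is $((as \bmod N)/M_2)-1$. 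At the places $Q_j$ at infinity, the order is nonnegative exactly when $(ar \bmod N)+(as\bmod N)\le N-1$.

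Putting these three conditions together, $\omega_a$ is holomorphic if and only if $a\in A(r,s)$. Moreover, any holomorphic eigendifferential with eigenvalue determined by $a$ must equal $c\,\omega_a$ for a constant $c$. Indeed, writing it as $h(x)\omega_a$, the constraints at the finite places force $h$ to be a polynomial with no zeros at $0$ and $-1$ unless the order bounds permit it, and the constraint at infinity then forces $\deg h=0$. This is the step I expect to be the main obstacle, since the bookkeeping of orders at $\Omega_2,\Omega_3$ must be done carefully to show that each eigenspace has dimension at most one.

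Granting this, $\{\omega_a : a\in A(r,s)\}$ is a basis of the holomorphic differentials, so $|A(r,s)|=g$. Their orders at $P_1$, namely $(ar\bmod N)-1$, are pairwise distinct because $a\mapsto ar \bmod N$ is injective when $\gcd(N,r)=1$. A basis of holomorphic differentials with distinct orders at $P_1$ realizes all $g$ gap orders, so the gap sequence at $P_1$ is exactly $\{v_{P_1}(\omega_a)+1\}=\{ar\bmod N : a\in A(r,s)\}$. Hence $\Psi$ is well defined and surjective onto $Gap(P_1)$. It is also injective, by the distinctness above. As a consistency check, counting $|A(r,s)|$ directly should recover the genus formula (i) of Theorem \ref{recap}.
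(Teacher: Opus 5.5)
Your argument is correct. The paper gives no proof of its own here and only defers to Irokawa--Sasaki; your eigenbasis-of-holomorphic-differentials computation is the standard way to obtain this description. It has the advantage of being self-contained and visibly valid whenever $p\nmid N$, not only over $\mathbb C$.

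Two local statements need correcting, though neither affects the conclusion.

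\emph{The case $as\equiv 0\pmod N$.} Here $\lceil as/N\rceil=as/N$, so $v_{R_j}(\omega_a)=N/M_2-1\ge 1$. Thus $\omega_a$ is regular at the $R_j$, contrary to what you wrote. For such $a$ the failure occurs at infinity. There one must use $v_{Q_j}(dx)=-N/M_3-1$, not $e-1$, and this gives $v_{Q_j}(\omega_a)=-(ar\bmod N)/M_3-1<0$. So your $Q_j$ criterion $(ar\bmod N)+(as\bmod N)\le N-1$ is valid only when $as\not\equiv 0$. The two misstatements cancel, and the conjunction of your conditions is still exactly $a\in A(r,s)$.

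\emph{The dimension-one step you flagged is routine.} For every $a$ one has $v_{P_1}(\omega_a)\le N-2<N$, $v_{R_j}(\omega_a)<N/M_2$ and $v_{Q_j}(\omega_a)<N/M_3$, and $\omega_a$ has order $0$ at all other places. Hence $h\omega_a$ holomorphic forces $h$ to be a polynomial, and then a constant. The same argument shows the $\xi^a$-eigenspace is zero for $a\notin A(r,s)$, which your basis claim also needs.
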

In \cite{sasaki} it is also implicitly proven the following.
\begin{prop} Let $(r,s)\in A_N$ such that $\gcd(N,s+1)=1$. The map
$$\Psi: A(r,s)\longrightarrow Gap(Q_1),$$
defined by $\Psi(a)=-a(s+r)\pmod N$  is bijective.
\end{prop}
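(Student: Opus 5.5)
We have to prove: for $(r,s)\in A_N$ with $\gcd(N,s+1)=1$, the map $a\mapsto -a(s+r) \bmod N$ is a bijection from $A(r,s)$ onto the gap sequence at $Q_1$. In the setting of interest $r=1$, so $\gcd(N,r+s)=1$ and $Q_1$ is the unique place at infinity, with $M_3=1$.

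The approach is to construct, for each $a$, an explicit function that is regular away from $Q_1$, and then count. For $a\in\{0,\ldots,N-1\}$ and integers $i,j$, consider
\[
f_{a,i,j}=\frac{y^a}{x^{i}(1+x)^{j}}.
\]
By Theorem \ref{recap}(v), with $M_1=M_3=1$ and $M_2=\gcd(N,s)$, its divisor is
\[
(a r-Ni)P_1+\frac{as-Nj}{M_2}\sum R_k-\bigl(a(r+s)-N(i+j)\bigr)Q_1.
\]
Choose $i=\lfloor ar/N\rfloor$ and $j=\lfloor as/N\rfloor$. Then $f_a$ is regular outside $Q_1$, with pole order
\[
n_a=N\Bigl(\Bigl\lfloor \tfrac{a(r+s)}{N}\Bigr\rfloor-\lfloor \tfrac{ar}{N}\rfloor-\lfloor \tfrac{as}{N}\rfloor\Bigr)+\bigl(a(r+s)\bmod N\bigr)=\langle ar\rangle+\langle as\rangle,
\]
where $\langle\cdot\rangle$ denotes the least nonnegative residue mod $N$. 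Multiplying by powers of $x$, which has pole order $N$ at $Q_1$, shows that $L(mQ_1)$ is spanned by the functions $x^{b}f_a$ with $0\le a<N$, $b\ge0$ and $n_a+bN\le m$.

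The key step is to show that these functions are linearly independent and span the space of functions regular off $Q_1$. Independence holds because their pole orders are pairwise distinct modulo $N$: indeed $n_a\equiv a(r+s)$ and $\gcd(N,r+s)=1$. For spanning, note that the integral closure of $\mathbb K[x]$ is a free $\mathbb K[x]$-module with basis $\{f_a\}$. This is the standard integral basis for Kummer extensions: an element $\sum c_a(x)y^a$ is integral iff each summand is, and the minimal denominators are exactly as chosen above. Consequently the Weierstrass semigroup at $Q_1$ is generated by $N$ together with the $n_a$, and each residue class $a(r+s)$ mod $N$ has minimal element exactly $n_a$.

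The gaps are therefore the integers $n_a-kN>0$ with $k\ge1$. Since $n_a<2N$, the only gaps arise for $k=1$, and they are exactly the values $n_a-N$ when $n_a>N$, that is, when $\langle ar\rangle+\langle as\rangle\ge N+1$ (equality with $N$ is excluded by coprimality). Such a gap equals $\langle ar\rangle+\langle as\rangle-N\equiv a(r+s)$.

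Finally we match the indexing to $A(r,s)$. Replace $a$ by $a'=N-a$. The condition $\langle ar\rangle+\langle as\rangle>N$ becomes $\langle a'r\rangle+\langle a's\rangle<N$ with both residues nonzero, which is exactly $a'\in A(r,s)$. The corresponding gap is $N-\langle a'(r+s)\rangle=\langle -a'(r+s)\rangle$, which is $\Psi(a')$. Injectivity of $\Psi$ follows from $\gcd(N,r+s)=1$. The count also agrees with the genus: $|A(r,s)|=g$, since the number of gaps equals the genus by (i).

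The main obstacle is the integral-basis claim, namely that no combination of the $f_a$ has a smaller pole order. Distinct residues mod $N$ of the pole orders already force minimality. Residues nonzero mod $N$ hold for $a\ne0$, and the full integral closure statement reduces to the valuation check at $P_1$ and at the $R_k$, which we would carry out directly.
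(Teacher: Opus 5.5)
Your proof is correct for the case $r=1$, which is the only case the paper uses. It takes a different route from the paper. The paper gives no argument here: it attributes the statement to Irokawa--Sasaki. The natural argument behind that citation, and the one the index set $A(r,s)$ is built for, is the dual one via holomorphic differentials. For $a\in A(r,s)$ the differentials $\omega_a=y^a x^{-\lceil ar/N\rceil}(1+x)^{-\lceil as/N\rceil}\,dx$ are holomorphic, lie in distinct $\rho$-eigenspaces, and satisfy $\ord_{Q_1}\omega_a+1=N-\langle ar\rangle-\langle as\rangle=\Psi(a)$. Once $|A(r,s)|=g$ is known they form a basis, and the gaps are read off directly.

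You work with functions regular off $Q_1$ instead. This gives more: the whole Weierstrass semigroup at $Q_1$, namely $\{n_a+bN\}$, and a basis of every $L(mQ_1)$. Your reflection $a\mapsto N-a$ is exactly the duality between $f_{N-a}$ and $\omega_a$.

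The price is the completeness step. Your integral-basis claim is true, but the fact that $\sum c_a(x)y^a$ is integral only if each summand is does not come from valuations. At the $R_k$, summands with $a$ congruent modulo $N/M_2$ can have equal valuations, and $M_2>1$ is exactly the case of interest. It comes from the projectors $\frac1N\sum_k\xi^{-ak}\rho^k$, which preserve regularity off $Q_1$ because $p\nmid N$ and $\rho(Q_1)=Q_1$. After projecting, the valuation check at $P_1$ and the $R_k$ is immediate.

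You can also bypass the integral basis altogether:
\begin{itemize}
\item $S=\{n_a+bN\}$ is contained in the Weierstrass semigroup at $Q_1$.
\item By your reflection, the complement of $S$ in the positive integers has exactly $|A(r,s)|$ elements.
\item Pair $a$ with $N-a$ among those $a$ for which $\langle ar\rangle,\langle as\rangle,\langle a(r+s)\rangle$ are all nonzero. The remaining $a$ number $M_1+M_2+M_3-3$, since $\gcd(N,r,s)=1$.
\item This gives $|A(r,s)|=(N+2-M_1-M_2-M_3)/2=g$, which forces equality.
\end{itemize}
This is the same count the differential approach needs.

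Finally, what your argument actually uses is $\gcd(N,r+s)=1$, which is what the stated hypothesis says when $r=1$. For general $r$ the literal hypothesis $\gcd(N,s+1)=1$ does not suffice. For example, with $N=9$ and $(r,s)=(2,1)$ one has $2,5\in A(2,1)$ and $\Psi(2)=\Psi(5)=3$. Under $\gcd(N,r+s)=1$, your proof goes through for every $r$ once $P_1$ is replaced by the $\gcd(N,r)$ places over $x=0$.
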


For $r=1$, the elements in $A(1,s)$ are 
$a\in \{1,\ldots,N-1\}$ such that 
\begin{equation}\label{weis}
    N-1-a\ge as \pmod N \ge 1. 
\end{equation}
 
Therefore,  gaps at $P_1$ are $a\in \{1,\ldots,N-1\}$ such that Condition \eqref{weis} holds.
 Similarly,
gaps at $Q_1$ are $-a(s+1) \pmod N$
 for each $a\in \{1,\ldots,N-1\}$ such that Condition \eqref{weis} holds.

 Lemma \ref{trenta} together with the above mentioned results on the structure of Weierstrass semigroups at $P_1$ and $Q_1$, yield the following necessary condition.

 \begin{thm}\label{trentin} A necessary condition for a curve $\cX$ to have a tame automorphism group $H$ admitting a cyclic subgroup of index $2$  with
 $|H|>
  4g(\cX)+4$ is that 
 $\cX: y^N=x(1+x)^s$ with $\gcd(N,s+1)=1$, $d=\gcd(N,s+2)\ge 2$ and $N$ divides $ds$ or $N-1-d< ds \pmod N$.
  \end{thm}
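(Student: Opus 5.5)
The plan is to combine the normal form from Section \ref{sez4} with Lemma \ref{trenta} and the description of the gaps at $P_1$. First, by Proposition \ref{3cases} and the Corollary that $r=1$ may be assumed, $\cX$ is birationally equivalent to $\cX(N,1,s)$ with $M=\gcd(N,s)>2$ and $M_3=\gcd(N,s+1)=1$. This is exactly the normal form recorded in Theorem \ref{14settembre}, and it gives the first two requirements of the statement.

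Next I would check that the gap description applies, i.e. that $(1,s)\in A_N$. Clearly $1\le s$. Moreover $s\le N-2$, because $s<N$ and $s=N-1$ would force $\gcd(N,s+1)=N>1$. Since $\gcd(N,1)=1$, the first of the two propositions deduced from \cite{sasaki} applies with $r=1$. There $\Psi(a)=a$, so the gaps at $P_1$ are precisely the $a\in\{1,\ldots,N-1\}$ satisfying Condition \eqref{weis}.

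Now set $d=\gcd(N,s+2)$. By Lemma \ref{trenta}, which uses the involution $\sigma$ with $\div(\sigma(y)/y)=(s+2)(Q_1-P_1)$, the integer $d$ lies in the Weierstrass semigroup at $P_1$.
\begin{itemize}
\item To get $d\ge 2$: if $d=1$, then $1$ would be a non-gap at $P_1$. That forces a function with a single simple pole, so $\cX$ would be rational, contradicting $g(\cX)\ge 2$.
\item If $d=N$, then $N\mid ds$ trivially.
\item If $2\le d\le N-1$, then $d$ is not a gap, so Condition \eqref{weis} fails for $a=d$. This means either $ds \pmod N=0$, i.e. $N\mid ds$, or $ds\pmod N> N-1-d$.
\end{itemize}
This is exactly the stated alternative.

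The main subtle point is making sure Lemma \ref{trenta} really applies. It needs $\sigma$ to be the involution of case (B) of Proposition \ref{3cases}, fixing $\Omega_2$ and swapping $P_1$ and $Q_1$, so that the divisor computation holds with $r=1$. I would verify this by rereading case (B) with $r=1$, where $2r+s=s+2$. Everything else is direct bookkeeping.
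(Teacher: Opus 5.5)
Your proposal is correct and takes the same route as the paper. The paper derives the theorem directly from the normal form $\cX(N,1,s)$ with $\gcd(N,s+1)=1$, from Lemma \ref{trenta} (which places $d=\gcd(N,s+2)$ in the Weierstrass semigroup at $P_1$), and from the gap description at $P_1$ deduced from \cite{sasaki} with $r=1$, so that $a=d$ must violate Condition \eqref{weis}. Your extra checks, that $(1,s)\in A_N$ and that $d\ge 2$ because $1$ is a gap at every point when $g(\cX)\ge 1$, make explicit what the paper leaves implicit.
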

Examples \ref{gH2} and \ref{gHERM} show that the necessary condition in Theorem \ref{trentin} is realized by natural infinite families.

\begin{rem}
Since there is an automorphism of the curve mapping $P_1$ to $Q_1$, the Weierstrass semigroups at $P_1$ and $Q_1$ coincide. Therefore,
the sets
$\{a\mid a \in A(1,s)\}$ and
$\{-a(s+1) \pmod N \mid a \in A(1,s)\}$ must be equal.
\end{rem}

\section{Classification results}\label{classi}

In this section we classify curves defined over an algebraically closed field $\mathbb K$ of positive characteristic $p$ which, for a fixed genus $g$, admit the largest possible automorphism group of size coprime with $p$ and with a cyclic subgroup of index $2$, distinguishing the hyperelliptic and the non-hyperelliptic case.

From now on, let $\mathcal S$ be the spectrum of possible sizes  $2N>4g+4$ of automorphism group of curves of genus $g\geq 2$ admitting a cyclic subgroup of index $2$, under the assumption that $p\nmid N$.

As a corollary of Theorem \ref{tame-normal-form}, we have that
\begin{equation}\label{spettro1}
    \mathcal{S}\subseteq \left\{4g\frac{k}{k-1}\,:\, 2\leq k\leq g, \, k-1 \text{ is a divisor of } 2g \text{ and } \gcd(2g,k)\leq 2 \right\}.
\end{equation}
In order to prove the equality in \eqref{spettro1}, we first provide the following consequence of 
Theorem \ref{tame-normal-form} and Proposition \ref{espliciti}.

\begin{thm} \label{suffspettro}
Let $k$ and $v$ be as in Theorem \ref{tame-normal-form}, and $\cX$ be the curve of genus $g$ defined by \eqref{esplicita}.
Assume that one of the following cases holds
\begin{itemize}
    \item $k$ is odd and $v$ is the inverse of $g$ modulo $k$, or
    \item $k$ is even and $v$ is the inverse of $g$ modulo $k/2$.
\end{itemize}
Then $\cX$ has an automorphism group of size greater than $4g+4$ and coprime with $p$, admitting a cyclic subgroup of index $2$. In particular, $4g\frac{k}{k-1}\in \mathcal{S}.
$
\end{thm}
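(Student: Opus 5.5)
The plan is to reduce the statement to Proposition \ref{espliciti}. Set $M=\frac{2g}{k-1}$, $N=kM$ and $s=vM$, so that \eqref{esplicita} is exactly $\cX(N,1,s)$. We are told $p\nmid 2N$, consistent with the tame setting. I will verify the three arithmetic hypotheses of Proposition \ref{espliciti}:
\[
\gcd(N,s)>2,\qquad \gcd(N,s+1)=1,\qquad N\mid s^2+2s.
\]
That proposition then gives an automorphism group of order $2N>4g'+4$ with a cyclic subgroup of index $2$, on a curve of genus $g'=\frac{N-\gcd(N,s)}{2}$. Since $N-M=(k-1)M=2g$, we get $g'=g$, and $2N=4g\frac{k}{k-1}\in\mathcal S$.

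First I check that $v$ can be chosen as required. If $k$ is odd, $\gcd(2g,k)\le 2$ forces $\gcd(g,k)=1$, so $g$ is invertible mod $k$. If $k$ is even, $\gcd(2g,k)\le 2$ gives $\gcd(g,k/2)=1$, so $g$ is invertible mod $k/2$. In that case the residue class mod $k/2$ has the two representatives $v$ and $v+k/2$ in $\{1,\ldots,k-1\}$. If $k/2$ is even, then $v$ is already odd, being coprime to $k/2$. If $k/2$ is odd, one of the two representatives is odd. Either way we may take $\gcd(k,v)=1$, matching the requirements of Theorem \ref{tame-normal-form}.

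Now the hypotheses. We have $\gcd(N,s)=M\gcd(k,v)=M$, and $M>2$ is equivalent to $k-1<g$, i.e.\ $k\le g$. The key congruence comes from $kM-M=2g$, which gives $M\equiv -2g\pmod k$. Hence
\[
vM+2\equiv 2(1-gv)\pmod k .
\]
This vanishes mod $k$ in both cases: for $k$ odd because $gv\equiv1\pmod k$, and for $k$ even because $gv\equiv 1\pmod{k/2}$. So $k\mid s+2$.

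Consequently $N=kM$ divides $vM(vM+2)=s(s+2)$. For coprimality, $s+1=vM+1$ is coprime to $M$. It is also congruent to $-1$ mod $k$, hence coprime to $k$, and therefore $\gcd(N,s+1)=1$.

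The only delicate point is the parity adjustment of $v$ in the even case; everything else is the single congruence $M\equiv-2g\pmod k$.
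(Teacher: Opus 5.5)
Your proof is correct and follows the paper's route. Both arguments reduce to Proposition \ref{espliciti}, and the only real content is the congruence $M\equiv -2g \pmod k$, which turns $N\mid s(s+2)$ into $2vg\equiv 2\pmod k$.

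You are in fact more complete than the paper, which leaves the hypotheses $\gcd(N,s)=M>2$ and $\gcd(N,s+1)=1$ implicit. Your existence and parity discussion for $v$ is not needed for this statement, since it belongs to the proof of Theorem \ref{Spectotale}, but it does no harm.
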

\begin{proof}
    By Proposition \ref{espliciti}, if $N=k\frac{2g}{k-1}$ divides $$s(s+2)=v\frac{2g}{k-1}\left(v\frac{2g}{k-1}+2\right),$$ then the claim holds. This is equivalent to 
    $$
v\frac{2g}{k-1}+2\equiv 0 \pmod {k},
    $$
that is
$$
2vg\equiv 2 \pmod k,
$$
whence either 
$$
vg\equiv 1 \pmod k,
$$
or 
$$
vg\equiv 1 \pmod{k/2},
$$
according to $k$ odd or even, respectively.
\end{proof}

\begin{thm}\label{Spectotale}
The spectrum $\mathcal S$ of possible sizes $2N>4g+4$, with $p\nmid N$, of automorphism group of curves of genus $g\geq 2$ admitting a cyclic subgroup of index $2$ is
$$
\mathcal{S}= \left\{4g\frac{k}{k-1}\,:\, 2\leq k\leq g, \, k-1 \text{ is a divisor of } 2g \text{ and } \gcd(2g,k)\leq 2 \right\}.
$$
\end{thm}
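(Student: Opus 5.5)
The inclusion ``$\subseteq$'' is already recorded in \eqref{spettro1} as a consequence of Theorem \ref{tame-normal-form}. So the plan is to prove the reverse inclusion. Fix $g\ge 2$ and an integer $k$ with $2\le k\le g$, $k-1\mid 2g$ and $\gcd(2g,k)\le 2$. I will produce an integer $v$ satisfying the hypotheses of Theorem \ref{suffspettro}, namely $v\in\{1,\ldots,k-1\}$, $\gcd(k,v)=1$, and the appropriate inverse condition. That theorem then gives $4g\frac{k}{k-1}\in\mathcal S$. Note that $k\le g$ is exactly $M=\frac{2g}{k-1}>2$, and that $2N=2kM=4g\frac{k}{k-1}$, so the parameters match those of Theorem \ref{tame-normal-form}.

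\emph{Case $k$ odd.} Since $k$ is odd, $\gcd(2g,k)=\gcd(g,k)$. This is at most $2$ and odd, hence equal to $1$. So $g$ is invertible modulo $k$. Let $v\in\{1,\ldots,k-1\}$ be its inverse; such a $v$ exists in this range because $k\ge 3$. Being a unit modulo $k$, $v$ automatically satisfies $\gcd(k,v)=1$.

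\emph{Case $k$ even.} Here $\gcd(2g,k)=2\gcd(g,k/2)$, so the hypothesis forces $\gcd(g,k/2)=1$. For $k=2$ take $v=1$. Otherwise let $v_0\in\{1,\ldots,k/2-1\}$ be the inverse of $g$ modulo $k/2$; then $\gcd(v_0,k/2)=1$. It remains to choose $v\in\{v_0,\,v_0+k/2\}\subseteq\{1,\ldots,k-1\}$ coprime to $k$, which only requires $v$ to be odd.
\begin{itemize}
\item If $k/2$ is even, then $v_0$ is odd (it is coprime to $k/2$), and $v=v_0$ works.
\item If $k/2$ is odd, exactly one of $v_0$ and $v_0+k/2$ is odd; take that one.
\end{itemize}
In either subcase $v\equiv v_0$ modulo $k/2$, so $v$ is still the inverse of $g$ modulo $k/2$.

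The main point to check is that Theorem \ref{suffspettro} really applies to this $v$. Its proof reduces, via Proposition \ref{espliciti}, to the divisibility $N\mid s(s+2)$ together with the side conditions $\gcd(N,s)=M>2$ and $\gcd(N,s+1)=1$, where $s=vM$.
\begin{itemize}
\item $\gcd(N,s)=M$ follows from $\gcd(k,v)=1$.
\item $M>2$ follows from $k\le g$.
\item For $\gcd(N,s+1)=1$: any common divisor of $N$ and $s+1$ would divide $s(s+2)$ and $s+1$. Since $\gcd(s+1,s(s+2))=\gcd(s+1,-1)=1$, it must equal $1$.
\end{itemize}
Hence Proposition \ref{espliciti} applies, and the curve has genus $(N-M)/2=g$. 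Finally, the tameness assumption $p\nmid 2N$ is part of the setting of $\mathcal S$, so it is inherited. This yields the reverse inclusion and completes the proof.
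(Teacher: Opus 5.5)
Your proposal is correct and follows the paper's proof essentially step for step. Like the paper, you take $v$ to be the inverse of $g$ modulo $k$ when $k$ is odd, and modulo $k/2$ when $k$ is even, shifting $v\mapsto v+k/2$ to restore coprimality with $k$ when $k/2$ is odd, and then apply Theorem \ref{suffspettro}. Your departures are minor improvements: you take $v=1$ directly for $k=2$ instead of citing $\mathcal I(g)$, and you check explicitly that $\gcd(N,s+1)=1$ follows from $N\mid s(s+2)$, a hypothesis of Proposition \ref{espliciti} that the paper leaves implicit.
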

\begin{proof}
For $g\geq 2$, let $2\leq k\leq g$ such that $k-1$ is a divisor of $2g$ and $\gcd(2g,k)\leq 2$. Our aim is to construct a curve of genus $g$ with an automorphism group of order $4g\frac{k}{k-1}$ with the required properties. Assume first that $k$ is odd. Then $\gcd(2g,k)=1$ and in particular $\gcd(g,k)=1$. So, let $v\in \{1,\ldots,k-1\}$ be the inverse of $g$ modulo $k$. Then clearly $\gcd(v,k)=1$ and by Theorem \ref{suffspettro} the curve defined by \eqref{esplicita} proves that $4g\frac{k}{k-1}\in \mathcal{S}$. 

Assume now that $k$ is even. If $k=2$, the curve $\mathcal{I}(g)$ in Theorem \ref{uniqueh} provides an example. So, let $k\geq 4$ even. Then $\gcd(2g,k)=2$ and hence $\gcd(g,k/2)=1$. Let $v\in \{1,\ldots,k/2-1\}$ be such that $vg\equiv 1\pmod{k/2}$. Clearly $\gcd(v,k/2)=1$ and hence $\gcd(v,k)\leq 2$. If $\gcd(v,k)=1$ then by Theorem \ref{suffspettro} the curve defined by \eqref{esplicita} proves that $4g\frac{k}{k-1}\in \mathcal{S}$. So, assume $\gcd(v,k)=2$. Then $v$ is even and $\gcd(v,k/2)=1$ implies that $k/2$ is odd. Let $v'=v+k/2$. Then $\gcd(v',k)=1$ and $v'g\equiv 1\pmod{k/2}$. As before, Theorem \ref{suffspettro} provides $4g\frac{k}{k-1}\in \mathcal{S}$. 
\end{proof}

Our aim is now to classify curves with an automorphism group attaining the higher part of the spectrum.
We start by dealing with the case $k=2$, corresponding to $N=4g$.

\begin{thm}\label{uniqueh} For every genus $g\ge 2$ such that $p\nmid g$ there exists a unique curve of genus $g$ with an automorphism group of order $8g$ admitting a cyclic subgroup of index $2$. Such a curve is
$$
\mathcal I(g): y^{4g}=x(1+x)^{2g}.
$$
\end{thm}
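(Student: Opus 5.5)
Existence and uniqueness are handled separately; both rest on Theorem \ref{tame-normal-form} with $k=2$. Since $|H|=8g$ and we assume $p\nmid g$ (and $p\neq 2$), $|H|$ is coprime with $p$. The tame machinery of Section \ref{sez4} therefore applies as soon as $8g>4g+4$, which holds because $g\ge 2$.

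\emph{Existence.} We apply Proposition \ref{espliciti} with $N=4g$ and $s=2g$, so that
\[
\gcd(N,s)=2g>2,\qquad \gcd(N,s+1)=\gcd(4g,2g+1)=1 .
\]
The divisibility condition also holds, since $s(s+2)=2g(2g+2)=4g(g+1)$ is a multiple of $N$. The proposition then gives that $\mathcal I(g)$ has genus $(4g-2g)/2=g$ and an automorphism group of order $2N=8g$ with a cyclic subgroup of index $2$. Explicitly, the group is generated by $\rho(y)=\xi y$ and $\sigma(x)=1/x$, $\sigma(y)=(x+1)^{g+1}/y^{2g+1}$.

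\emph{Uniqueness.} Let $\cX$ have genus $g$ and a group $H$ of order $8g$ with a cyclic subgroup $G$ of index $2$, so $N=4g$. By Theorem \ref{tame-normal-form}, $\cX$ is birationally equivalent to a curve \eqref{esplicita} with $N=k\cdot 2g/(k-1)$. From $N=4g$ we get $k/(k-1)=2$, hence $k=2$ and $M=2g$. The conditions $v\in\{1,\ldots,k-1\}$ then force $v=1$, so $s=vM=2g$. The equation \eqref{esplicita} becomes exactly $y^{4g}=x(1+x)^{2g}$. The remaining checks are routine: $\gcd(2g,k)=2\le 2$ and $\gcd(k,v)=1$ are satisfied, and the hypothesis $k<g+1$ holds because $g\ge 2$.

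The main obstacle is to make sure the reductions behind Theorem \ref{tame-normal-form} really pin down the birational class. These reductions are Proposition \ref{3cases}, which gives $M_1=M_3=1$ up to the birational equivalences of Lemma \ref{isomorfismi}, and the lemma reducing $r$ to $1$, which replaces $s$ by $w$. I must confirm that they do not leave several inequivalent values of $s$ with $\gcd(4g,s)=2g$. The only residues with that gcd are $s=2g$ itself, so the normalization is forced, and that is what I would check carefully. Finally, since $\mathcal I(g)$ is hyperelliptic (the involution $\rho^{2g}$ has quotient $\cX/\langle\rho^{2g}\rangle$ rational, as in Lemma \ref{24stab}), one may also remark that this is the hyperelliptic curve mentioned in the introduction.
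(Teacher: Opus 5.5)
Your proposal is correct and follows the paper's own proof. Uniqueness comes from Theorem \ref{tame-normal-form}, which forces $k=2$, $v=1$ and hence $s=2g$. Existence comes from the criterion $N\mid s(s+2)$ of Proposition \ref{espliciti}; the paper cites Theorem \ref{suffspettro}, which for $k=2$, $v=1$ is exactly this proposition, and your explicit $\sigma(y)=(x+1)^{g+1}/y^{2g+1}$ and gcd checks are accurate.
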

\begin{proof} 
By Theorem \ref{tame-normal-form}, if $N=4g$ the only possibility is $k=2$ and $v=1$, and an equation of the curve is that of
$\mathcal I(g)$. Moreover, by Theorem \ref{suffspettro},  $\mathcal I(g)$ has an automorphism group of order $8g$ admitting a cyclic subgroup of index $2$.
\end{proof}

Note that $\mathcal I(g)$ is a hyperelliptic curve, since by Lemma \ref{24stab} the involution $\rho^{2g}$ is such that the quotient curve $\mathcal I(g)/\langle \rho^{2g}\rangle$ is rational.
We prove that this is the only hyperelliptic curve among the curves defined by \eqref{esplicita}.

\begin{prop}\label{hyperelliptic-characterization}
The only hyperelliptic curve occurring in the (tame) spectrum
above $4g(\mathcal X)+4$ is the curve $\mathcal I(g)$ of
Theorem~\ref{uniqueh}, corresponding to $|H|=8g$.
\end{prop}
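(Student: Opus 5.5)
We need to show: if $\cX$ is given by \eqref{esplicita} with $k\ge 3$ (i.e. $|H|=2N<8g$), then $\cX$ is not hyperelliptic. The case $k=2$ is $\mathcal I(g)$, which is hyperelliptic as noted after Theorem \ref{uniqueh}.

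The plan is to argue by contradiction using the hyperelliptic involution. Suppose $\cX$ is hyperelliptic with hyperelliptic involution $\iota$. Then $\iota$ is central in $\aut(\cX)$, so it commutes with $\rho$ and $\sigma$, and $\langle H,\iota\rangle$ has order $2N$ or $4N$ according as $\iota\in H$ or not.

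First suppose $\iota\notin H$. Then $\langle G,\iota\rangle$ is an abelian group of order $2N=4g\frac{k}{k-1}$. For $k\le g$ this exceeds $4g+4$, which contradicts Theorem \ref{theorem11.79}. Indeed, $4gk/(k-1)>4g+4$ is equivalent to $g>k-1$, which is exactly the range of \eqref{spettro1}.

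So $\iota\in H$, and $\iota$ is a central involution of $H$. There are two subcases.

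If $\iota\in G$, then $\iota=\rho^{N/2}$, which requires $N$ to be even. Centrality in $H$ is automatic, since $\sigma\rho^{N/2}\sigma=\rho^{(lM-1)N/2}=\rho^{N/2}$ because $lM-1$ is odd. The key check is the genus of $\cX/\langle\rho^{N/2}\rangle$. The fixed points of $\rho^{N/2}$ are the points fixed by $G$ (namely $P_1$ and $Q_1$) together with the points of $\Omega$ whose stabilizer $L$ (of order $N/M=k$) has even order. If $k$ is even, the involution fixes $M+2$ points. Riemann--Hurwitz then gives
\[
2g-2=2(2\bar g-2)+M+2,
\]
so $\bar g=0$ if and only if $M+2=2g+2$, that is, $M=2g$, that is, $k=2$. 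If $k$ is odd, the involution fixes only $2$ points, so $\bar g=(g+0)/2$ is not $0$. Either way, $k\ge3$ gives a non-rational quotient.

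If $\iota\notin G$, then $\iota=\sigma\rho^j$ is an involution outside $G$ that commutes with $\rho$. This forces $\rho^{lM-1}=\rho$, i.e. $N\mid lM-2$, so $H$ is abelian of order $2N>4g+4$. This again contradicts Theorem \ref{theorem11.79}.

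The main obstacle is the fixed-point count in the subcase $\iota\in G$: one must verify that the points of the long orbits have trivial stabilizer, which follows from Theorem \ref{recap}(vii). Once that is settled, every case with $k\ge 3$ leads to a contradiction, and $\mathcal I(g)$ is the only hyperelliptic curve in the spectrum.
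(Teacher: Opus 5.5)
Your proof is correct. Its skeleton matches the paper's: first force $\iota\in G$, then show that $\iota=\rho^{N/2}$ has a rational quotient only when $k=2$. The tools differ at both steps, though.

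\textbf{Ruling out $\iota\notin G$.} The paper works on the rational curve $\mathcal X/\langle\iota\rangle$. There $G$ would induce a cyclic group $\overline G$ of order $N>2g+2$ preserving the branch locus $B$, with $|B|=2g+2$. So $B$ would consist of fixed points of $\overline G$, of which there are only two.

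You instead note that $\langle G,\iota\rangle\cong G\times\langle\iota\rangle$ is abelian of order $2N>4g+4$, contradicting Theorem~\ref{theorem11.79}. This single argument already covers both of your subcases, $\iota\notin H$ and $\iota\in H\setminus G$. The separate treatment through $\rho^{lM-1}=\rho$ is therefore redundant.

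\textbf{The case $\iota\in G$.} The paper again counts branch points downstairs. $B$ is the union of $a\le 2$ fixed points of $\overline G$ (now of order $N/2$) and exactly one orbit of length $N/2$. Hence $2g+2=a+N/2$, so $a=N/2-M+2\le 2$, which forces $k=2$ without any parity discussion.

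You count fixed points upstairs on $\mathcal X$. You use the short-orbit structure $\{P_1\},\{Q_1\},\Omega$ of $G$, with stabilizer $L$ of order $k$ on $\Omega$, and apply Riemann--Hurwitz to $\langle\rho^{N/2}\rangle$. This requires splitting by the parity of $k$, but it yields the quotient genus explicitly: $g/2$ for $k$ odd and $(2g-M)/4$ for $k$ even.

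Your verification that $\rho^{N/2}$ is central in $H$ is harmless but unnecessary, since centrality of the hyperelliptic involution is already given.

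\textbf{What each route relies on.} The paper uses only $N>2g+2$ and the orbit structure of tame cyclic subgroups of $PGL(2,\mathbb K)$. Yours relies on the abelian bound and on the orbit data for $G$ established earlier in the paper. Both are fully rigorous.
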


\begin{proof}
We already pointed out that $\mathcal I(g)$ is hyperelliptic.

Conversely, assume that $\mathcal X$ defined by \eqref{esplicita} is hyperelliptic, and let $\iota$
be its hyperelliptic involution. Since $\iota$ is central in
$\aut(\mathcal X)$, the group $G$ induces a cyclic group $\overline G$
on the rational curve $\mathcal X/\langle\iota\rangle$. Let $B$ be the
branch locus of the hyperelliptic covering, so that $|B|=2g+2$.

We first show that $\iota\in G$. Indeed, if $\iota\not\in G$, then $|\overline G|=N$. Since
$N>2g+2$, every point of $B$ would have to be fixed by $\overline G$,
as every non-fixed orbit of a tame cyclic subgroup of
$PGL(2,\mathbb K)$ has length $N$. This is impossible, since
$\overline G$ has only two fixed points and $|B|=2g+2>2$.

Therefore $\iota\in G$ and $|\overline G|=N/2$. Hence $B$ is the union
of $a\le 2$ fixed points of $\overline G$ and some orbits of length
$N/2$. Since $2g+2<N$, there is exactly one such orbit, and
\[
2g+2=a+\frac{N}{2}.
\]
Using $2g=N-M$, we obtain
\[
a=\frac{N}{2}-M+2.
\]
Since $a\le2$, it follows that $N/2\le M$, that is, $k=2$.
\end{proof}

As a by-product of Lemma \ref{24stab}, the following result is obtained.

\begin{prop}\label{iperl}\label{maindi}
If $\cX$ is not hyperelliptic, then  $|H|\le 6g$. Moreover, if $d=\gcd(N,s+2)$, then $d\ge 3$.
\end{prop}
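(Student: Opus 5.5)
The first claim follows from the spectrum description together with Proposition \ref{hyperelliptic-characterization}. By Theorem \ref{tame-normal-form} (equivalently \eqref{spettro1}), any $H$ with $|H|>4g+4$, of order coprime to $p$ and with a cyclic subgroup of index $2$, has
\[
|H|=2N=4g\frac{k}{k-1}, \qquad 2\le k\le g .
\]
The map $k\mapsto k/(k-1)$ is strictly decreasing. So the largest value $8g$ comes from $k=2$, and every other admissible value satisfies $k\ge 3$, which gives $|H|\le 4g\cdot\frac{3}{2}=6g$. Proposition \ref{hyperelliptic-characterization} says that $k=2$ forces $\cX=\mathcal I(g)$, which is hyperelliptic. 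Hence a non-hyperelliptic $\cX$ has $k\ge 3$ and $|H|\le 6g$. If $|H|\le 4g+4$ the bound is trivial, since $4g+4\le 6g$ for $g\ge 2$.

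For the second claim, put $d=\gcd(N,s+2)$ with $\cX=\cX(N,1,s)$ as in Theorem \ref{14settembre}. I will relate $d$ to $k=N/M$. By Theorem \ref{abgr} we have $lM\equiv 2 \pmod{N/M}$. Set $t=N-lM$ as in Theorem \ref{14settembre}; then $t\equiv -2 \pmod k$, so $k\mid t+2$. Also $M\mid t$, so $\gcd(M,t+2)=\gcd(M,2)$.

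The obstacle is that the equation may involve $s$ rather than $t$. I plan to argue that $s$ and $t$ can be identified. The divisor computation in Lemma \ref{trenta} / Proposition \ref{3cases}, case (B), gives $\sigma(x)=1/x$ and $\sigma(y)=c\,(x+1)^a/y^{s+1}$, up to a unit. From this, $\sigma\rho\sigma=\rho^{-(s+1)}$. Comparing with $\sigma\rho\sigma=\rho^{lM-1}$ yields $lM-1\equiv -(s+1)\pmod N$, hence $s\equiv -lM\equiv t\pmod N$.

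Therefore $k\mid s+2$, so $k\mid d$ and $d\ge k\ge 3$ in the non-hyperelliptic case. I would double-check this via Lemma \ref{trenta}: $d$ is a non-gap at $P_1$. The value $d=1$ is impossible, and $d=2$ would make $P_1$ a point whose semigroup contains $2$, forcing $\cX$ to be hyperelliptic (since $g\ge2$). This gives an independent proof that $d\ge3$, and I would present this Weierstrass-semigroup argument as the main route, since it avoids the bookkeeping of identifying $s$ with $t$.
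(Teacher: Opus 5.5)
Your main route is the paper's. Non-hyperellipticity excludes $k=2$, since by Theorem \ref{tame-normal-form} that case forces $\cX=\mathcal I(g)$; hence $|H|=4gk/(k-1)\le 6g$. Then $d\ge 3$ because Lemma \ref{trenta} makes $d$ a non-gap at $P_1$, so $d\le 2$ would force $\cX$ to be rational or hyperelliptic.

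You were right to demote the side argument identifying $s$ with $t$. As written it is circular: writing $\sigma(y)=c(x+1)^a/y^{s+1}$ with $a\in\mathbb Z$ presupposes $N\mid s(s+2)$. Given $\gcd(k,v)=1$, that is equivalent to the conclusion $k\mid s+2$ you wanted to derive. That route would need an independent justification, for example showing that $\sigma(y)y^{s+1}$ is a $\rho$-eigenfunction with no zero at $P_1$, and hence lies in $\mathbb K(x)$.
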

\begin{proof}
As $\cX$ is not hyperelliptic, by Theorems \ref{uniqueh} and \ref{tame-normal-form} we have $k\geq 3$, whence $|H|\leq 6g$ follows.
The claim on $d$ follows from Lemma \ref{trenta} as $d$ is a non-gap at some point of $\cX$, and $\cX$ is not hyperelliptic.
\end{proof}



We are now going to discuss the case $k=3$ in order to classify curves attaining the upper bound in Proposition \ref{iperl}.
By Proposition~\ref{hyperelliptic-characterization}, all the curves
considered in the remainder of the section are non-hyperelliptic.
Therefore Proposition~\ref{maindi} applies, and
\(
d=\gcd(N,s+2)\geq 3.
\)

\begin{thm}\label{bello}
Let $\cX$ be a curve of genus $g\geq 2$ admitting an automorphism group $H$ of size $6g$
containing a cyclic group of order $3g$, with $p\nmid 3g$. Then $g\not\equiv 0 \pmod{3}$ and $\cX$ is birationally equivalent to 
one of the two curves
\begin{itemize}
\item $
\mathcal X(g):y^{3g}=x(x+1)^g$, if $g\equiv 1\pmod 3$;
\item $
\mathcal Y(g):y^{3g}=x(x+1)^{2g}
$,if $g\equiv 2\pmod 3$.
\end{itemize}
\end{thm}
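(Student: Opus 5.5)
We need $|H|=6g>4g+4$, which needs $g>2$; in any case the tame theory applies, since $p\nmid 3g$ and $p\neq 2$ ($p$ does not divide $2N$ in the standing assumption of Section \ref{sez4}). So $N=3g$, and Theorem \ref{tame-normal-form} applies with $k\frac{2g}{k-1}=3g$, which forces $k=3$. The curve is then, up to birational equivalence,
$$y^{3g}=x(1+x)^{vg},\qquad v\in\{1,2\},$$
with $M=g$. The condition $\gcd(2g,k)\le 2$ becomes $\gcd(2g,3)=1$, so $3\nmid g$. This gives the first claim, $g\not\equiv 0\pmod 3$.

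Next, pin down $v$ using Theorem \ref{14settembre} (equivalently Theorem \ref{abgr}). With $s=vg$ we need some $t$ with $\gcd(3g,t)=g$, $\gcd(3g,t+1)=1$ and $3g\mid t(t+2)$. Writing $t=ug$ with $u\in\{1,2\}$, the condition $3g\mid ug(ug+2)$ reduces to $3\mid ug+2$, that is $ug\equiv 1\pmod 3$. So $u\equiv g^{-1}\pmod 3$: $u=1$ if $g\equiv1$ and $u=2$ if $g\equiv 2\pmod 3$.

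The delicate step is relating $t$ to $s$, since the theorem only asserts the existence of $t$. I would return to the proof of Theorem \ref{abgr}, where the relation $\sigma\rho\sigma=\rho^{lM-1}$ for the involution $\sigma$ fixing a point of $\Delta$ must be compatible with the explicit action. In case (B) of Proposition \ref{3cases} we have $\sigma(x)=1/x$, and Lemma \ref{trenta} gives $\operatorname{div}(\sigma(y)/y)=(s+2)(Q_1-P_1)$. Comparing with $\operatorname{div}(y)=P_1+\frac{s}{M}(R_1+\dots+R_M)-(s+1)Q_1$, one gets $\sigma(y)=c\,(x+1)^a/y^{s+1}$, and hence $\sigma\rho\sigma=\rho^{-(s+1)}$. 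Then $t\equiv s\pmod N$ and $3g\mid s(s+2)$, which forces $v\equiv g^{-1}\pmod 3$.

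Alternatively, one can argue via Proposition \ref{maindi}: $d=\gcd(3g,vg+2)\ge 3$ requires $3\mid vg+2$, since $\gcd(g,vg+2)=\gcd(g,2)\le 2$. This yields $vg\equiv1\pmod 3$ directly, so $v=1$ for $g\equiv 1$ and $v=2$ for $g\equiv 2\pmod 3$. I would use this route; it is cleaner and relies only on results already established. For existence, I would note that Theorem \ref{suffspettro} (with $k=3$ odd and $v=g^{-1}\bmod 3$) shows that $\mathcal X(g)$ and $\mathcal Y(g)$ do carry such a group, although only the necessity direction is claimed. The main obstacle is justifying $d\ge3$, which rests on non-hyperellipticity via Proposition \ref{hyperelliptic-characterization}; since $k=3\neq 2$, that proposition applies.
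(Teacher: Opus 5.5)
Your chosen route is the paper's own proof. Theorem~\ref{tame-normal-form} with $N=3g$ forces $k=3$, $M=g$, $v\in\{1,2\}$ and $3\nmid g$; then $d=\gcd(3g,vg+2)\ge 3$ from Proposition~\ref{maindi} (legitimate since $k\neq 2$ rules out hyperellipticity) forces $vg\equiv 1\pmod 3$, and Theorem~\ref{suffspettro} gives existence.

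The one caveat is shared with the paper. It concerns the case you flagged but did not resolve: Theorem~\ref{tame-normal-form} needs $6g>4g+4$, i.e.\ $g\ge 3$, and at $g=2$ the statement genuinely fails. For generic $a$ the curves $y^2=x^6+ax^3+1$ form a one-parameter family, and each carries a dihedral group of order $12$ with a cyclic subgroup of order $6$. So your argument, like the paper's, should be stated for $g\ge 3$.
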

\begin{proof}
By Theorem \ref{tame-normal-form}, $\cX$ is birationally equivalent to one of the curves with equation \eqref{esplicita}, where $k=3$ and $v\in \{1,2\}$, that is the curves  $\mathcal X(g)$ and $\mathcal Y(g)$. Moreover, $\gcd(2g,k)\leq 2$ yields that $3$ does not divide $g$. Assume first $v=1$, that is $s=g$. Then $\gcd(N,s+2)\geq 3$ yields $\gcd(g+2,6)\geq 3$, that is $g\equiv 1\pmod 3$. Similarly, if $v=2$ then $s=2g$ and $\gcd(N,s+2)\geq 3$ yields $g\equiv 2\pmod 3$.
Finally, observe that both $\mathcal X(g)$ and $\mathcal Y(g)$  admit an automorphism group of order $6g$ containing a cyclic subgroup of index $2$ by Theorem \ref{suffspettro}.
\end{proof}

\begin{rem} In the complex case, Nakagawa \cite{Nakagawa} classified curves with an automorphism of order $3g$. More recently, Reyes-Carocca and Speziali \cite{Pietro} proved that the full automorphism group of $\mathcal X(g)$ and $\mathcal Y(g)$, viewed as Riemann surfaces, is the direct product of a cyclic group of order $3$ and a dihedral group of order $2g$. Our approach is different and provides explicit affine equations in arbitrary characteristic prime to $3g$. 
\end{rem}

\begin{thm}
If $|H|<6g$, then $|H|\le \frac{16}{3}g$. Also, $|H|=\frac{16}{3}g$ if and only if $g\geq 9$ is odd, $g\equiv 0 \pmod{3}$, and $\cX$ is birationally equivalent to one of the two curves
\begin{itemize}
\item $\mathcal A(g):y^{\frac{8}{3}g}=x(x+1)^{\frac{2}{3}g}$,

\item $\mathcal B(g):y^{\frac{8}{3}g}=x(x+1)^{2g}$. 
\end{itemize}
\end{thm}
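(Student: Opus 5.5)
The plan is to read everything off the spectrum description in Theorem \ref{Spectotale} and the normal form in Theorem \ref{tame-normal-form}, exactly as was done for $k=3$ in Theorem \ref{bello}. Throughout, $|H|=2N>4g+4$ and $p\nmid 2N$, as in the standing assumptions of Section \ref{sez4}.

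\textbf{Step 1: the bound.} By Theorem \ref{Spectotale}, $|H|=4g\frac{k}{k-1}$ for some $k\ge 2$. The function $k\mapsto \frac{k}{k-1}$ is strictly decreasing, and its first values give $8g$, $6g$, $\frac{16}{3}g$ for $k=2,3,4$. Hence $|H|<6g$ forces $k\ge 4$, and so $|H|\le 4g\cdot\frac43=\frac{16}{3}g$. Moreover, equality holds exactly when $k=4$.

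\textbf{Step 2: arithmetic consequences of $k=4$.} The conditions of Theorem \ref{tame-normal-form} read as follows.
\begin{itemize}
\item $k-1=3$ divides $2g$, so $3\mid g$.
\item $\gcd(2g,4)\le 2$, so $g$ is odd.
\item $M=2g/(k-1)=2g/3>2$, so $g>3$. Combined with the previous two conditions, this gives $g\ge 9$.
\end{itemize}
Also $N=kM=\frac{8}{3}g$, and the exponent is $s=vM$ with $v\in\{1,2,3\}$ and $\gcd(v,4)=1$. This leaves $v\in\{1,3\}$, that is, $s=\frac23 g$ or $s=2g$. So, up to birational equivalence, $\cX$ is $\mathcal A(g)$ or $\mathcal B(g)$, which proves the ``only if'' direction.

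\textbf{Step 3: existence (the ``if'' direction).} Assume $g\ge 9$ is odd with $3\mid g$. I will invoke Theorem \ref{suffspettro} in its even-$k$ case: it requires $vg\equiv 1 \pmod{k/2}$, i.e. modulo $2$. Since $g$ is odd, this holds precisely for odd $v$, so both $v=1$ and $v=3$ qualify. Theorem \ref{suffspettro} then yields, on both $\mathcal A(g)$ and $\mathcal B(g)$, an automorphism group of order $2N=\frac{16}{3}g$ containing a cyclic subgroup of index $2$.

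As a consistency check, $d=\gcd(N,s+2)\ge 3$ should hold, as required by Proposition \ref{maindi}. For $s=2g$ and $g=3h$ with $h$ odd, we get $d=\gcd(8h,6h+2)$. Since $6h+2\equiv 2\pmod 8$ for odd $h$, this gives $d=2\gcd(4h,3h+1)=2\gcd(h+?,\dots)$, which is not obviously $\ge 3$. I will verify this directly from the divisibility $N\mid s(s+2)$ supplied by Theorem \ref{suffspettro}: since $\gcd(N,s)=M$ and $N/M=4$, that divisibility forces $4\mid s/M\cdot(s+2)$, so $d\ge 4$.

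\textbf{Main obstacle.} The only delicate point is bookkeeping. First, I must confirm that $k=4$ is the unique value giving $\frac{16}{3}g$; this follows from strict monotonicity of $\frac{k}{k-1}$. Second, I must confirm that the hypotheses $g\ge 9$, $g$ odd, $3\mid g$ are exactly the conditions $k-1\mid 2g$, $\gcd(2g,k)\le 2$, $M>2$. Third, the excluded value $v=2$ must be handled correctly, since it fails $\gcd(v,k)=1$. None of these should require new ideas beyond the cited results.
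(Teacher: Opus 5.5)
Your proposal is correct and follows the paper's proof essentially verbatim. You identify $|H|=\frac{16}{3}g$ with $k=4$ in Theorem \ref{tame-normal-form}, read off $3\mid g$, $g$ odd and $v\in\{1,3\}$ (giving $\mathcal A(g)$ and $\mathcal B(g)$), and obtain existence from Theorem \ref{suffspettro} via $vg\equiv 1\pmod 2$. Your ``consistency check'' on $d$ is superfluous and contains a false congruence: for odd $h$ one has $6h+2\equiv 0$ or $4\pmod 8$, not $2$. However, your fallback argument is right ($N\mid s(s+2)$ with $N=4M$ and $v$ odd gives $4\mid s+2$, hence $4\mid d$), so the main argument is unaffected.
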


\begin{proof}
The case $|H|=16/3g$ corresponds to $k=4$ in Theorem \ref{tame-normal-form}. Therefore, $g\geq 4$ and the possibilities for $v$ in Equation \ref{esplicita} are $v=1$ and $v=3$, which correspond to the curves $\mathcal A(g)$ and $\mathcal B(g)$. Moreover, as $k-1=3$ is a divisor of $2g$, $g\equiv 0\pmod 3$, while $\gcd(k,2g)\leq 2$ yields that $g$ is odd.
To conclude the proof, we can observe that both $\mathcal A(g)$ and $\mathcal B(g)$ have an automorphism group of order $\frac{16}{3}g$ admitting a cyclic group of index $2$ by Theorem \ref{suffspettro}.
\end{proof}

\begin{thm}
If $|H|<16/3g$, then $|H|\le 5g$. Also, $|H|=5g$ if and only if $g\geq 6$ is even, $g\not\equiv 0 \pmod 5$, and $\cX$ is birationally equivalent to one of the curves
\begin{itemize}
\item $\mathcal C(g):y^{\frac{5}{2}g}=x(x+1)^{\frac{g}{2}}$, if $g \equiv 1 \pmod{5}$

\item $\mathcal D(g):y^{\frac{5}{2}g}=x(x+1)^{g}$, if $g \equiv 3 \pmod{5}$

\item $\mathcal E(g):y^{\frac{5}{2}g}=x(x+1)^{\frac{3g}{2}}$, if $g \equiv 2 \pmod{5}$

\item $\mathcal F(g):y^{\frac{5}{2}g}=x(x+1)^{2g}$, if $g \equiv 4 \pmod{5}$

\end{itemize}

\end{thm}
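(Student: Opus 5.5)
The plan follows the pattern of the preceding two theorems and works inside the parametrization of Theorem \ref{tame-normal-form}. There $|H|=2N=4g\frac{k}{k-1}$, and this is strictly decreasing in $k$. The values $k=2,3,4$ give $8g$, $6g$ and $\frac{16}{3}g$. Hence $|H|<\frac{16}{3}g$ forces $k\ge 5$, which already gives $|H|\le 4g\cdot\frac54=5g$. Moreover, $|H|=5g$ holds exactly when $k=5$.

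So I fix $k=5$ and read off the constraints from Theorem \ref{tame-normal-form}. First, $k-1=4$ divides $2g$, so $g$ is even. Second, $k\le g$ gives $g\ge 5$, hence $g\ge 6$. Third, $\gcd(2g,5)\le 2$ means $5\nmid g$. Then $M=2g/4=g/2$, $N=\frac52 g$ and $s=v\frac g2$ with $v\in\{1,2,3,4\}$ (automatically $\gcd(v,5)=1$). This gives the four equations $\mathcal C,\mathcal D,\mathcal E,\mathcal F$.

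The remaining task is to pin down which $v$ occurs for each residue of $g$ mod $5$. For necessity I would use the congruence from Theorem \ref{abgr}, in the form of Theorem \ref{14settembre}. There $N\mid t(t+2)$ with $\gcd(N,t)=M$, and writing $t=wM$ this gives $wM+2\equiv 0\pmod 5$, i.e. $wg\equiv 4\pmod 5$ (using $M=g/2$ and inverting $2$ mod $5$).

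The main obstacle is relating this $t$ to the exponent $s=vM$ of the curve. The normal form only fixes $s$ up to the birational equivalences already used, so I need the identification $t\equiv s$ or $t\equiv -s-2$, as suggested by the relation $\sigma\rho\sigma=\rho^{N-s-1}$. The cleanest way I see is through Lemma \ref{trenta} and the computation in case (B) of Proposition \ref{3cases}. There the involution fixing $\Omega$ satisfies $\sigma(x)=1/x$, and $\div(\sigma(y)/y)=(s+2)(Q_1-P_1)$. The exponent $lM-1$ in $\sigma\rho\sigma$ is then forced to be $-(s+1)$ modulo $N$, so that $N\mid s(s+2)$. Alternatively, one can apply the criterion of Proposition \ref{maindi}, $d=\gcd(N,s+2)\ge 3$, exactly as in the proof of Theorem \ref{bello}. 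Here $s+2=v\frac g2+2$, and $d\ge3$ with $\gcd(d,M)\mid 2$ forces $5\mid v\frac g2+2$. Equivalently, $vg\equiv 1\pmod 5$ after multiplying by $2$ and using $-4\equiv 1$.

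Solving $vg\equiv1\pmod5$ gives the stated table: $g\equiv1$ gives $v=1$ ($\mathcal C$), $g\equiv3$ gives $v=2$ ($\mathcal D$), $g\equiv2$ gives $v=3$ ($\mathcal E$), and $g\equiv4$ gives $v=4$ ($\mathcal F$). I would check carefully that $d\ge3$ really forces $5\mid s+2$ rather than merely some factor of $M$ dividing it. For this I use $\gcd(N,s+2)=\gcd(N,s+2)$ with $\gcd(M,s+2)=\gcd(M,2)\le 2$, so any $d\ge3$ must pick up the prime $5$. Sufficiency then follows directly from Theorem \ref{suffspettro} with $k=5$ odd and $v=g^{-1}\bmod 5$, which exhibits the group of order $5g$ on each of the four curves.
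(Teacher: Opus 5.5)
Your proposal is correct and follows essentially the paper's own route: restrict to $k=5$ in Theorem \ref{tame-normal-form}, use $\gcd(N,s+2)\ge 3$ from Proposition \ref{maindi} to pin down $v$ in terms of $g \bmod 5$, and get sufficiency from Theorem \ref{suffspettro}; your check that $d\ge 3$ forces $5\mid s+2$ (using $\gcd(M,s+2)\mid 2$) supplies a step the paper leaves implicit. One sign slip to fix: in your Theorem \ref{14settembre} aside, $wM+2\equiv 0 \pmod 5$ with $M=g/2$ gives $wg\equiv -4\equiv 1$, not $4$. With this correction the aside agrees with your identification $t\equiv s$ and your final condition $vg\equiv 1 \pmod 5$, instead of contradicting them.
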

\begin{proof}
The case $|H|=5g$ corresponds to $k=5$ in Theorem \ref{tame-normal-form}. Therefore, $g\geq 5$ and the possibilities for $v$ in Equation \ref{esplicita} are $v\in \{1,\ldots,4\}$ which correspond to the curves in the statement. For each curve, the value of $g \pmod 5$ is determined by the condition $\gcd(N,s+2)\geq 3$. Moreover, $k-1=4$ being a divisor of $2g$ implies $g$ even, while $\gcd(k,2g)\leq 2$ yields $g\not\equiv 0\pmod{5}$. 
To conclude the proof, we need to show that each curve $\mathcal C(g),\ldots,\mathcal F(g)$ has an automorphism group of order $5g$ admitting a cyclic group of index $2$. This follows from Theorem \ref{suffspettro}.
\end{proof}

\begin{thm}
If $|H|<5g$, then $|H|\le \frac{24}{5}g$. Also, $|H|=\frac{24}{5}g$ if and only if $g\geq 10$, $g\equiv 0 \pmod 5$, $g\not\equiv 0 \pmod 3$, and $\cX$ is birationally equivalent to one of the curves
\begin{itemize}
\item $\mathcal L(g):y^{\frac{12}{5}g}=x(x+1)^\frac{2}{5}g,\,$ and  $g\equiv 1 \pmod 3$;
\item $\mathcal M(g):y^{\frac{12}{5}g}=x(x+1)^{2g},\, $ and $g\equiv 2 \pmod 3$.
\end{itemize}
\end{thm}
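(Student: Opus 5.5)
The plan follows the pattern of the preceding classification theorems, specialised to $k=6$ in Theorem \ref{tame-normal-form}. By Theorem \ref{Spectotale}, every order in $\mathcal S$ has the form $4g\frac{k}{k-1}$, and this quantity is strictly decreasing in $k$. The values $k=2,3,4,5$ give $8g,6g,\frac{16}{3}g,5g$. So $|H|<5g$ forces $k\ge 6$, hence $|H|\le \frac{24}{5}g$, with equality exactly when $k=6$. This settles the first assertion.

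For $k=6$, the conditions of Theorem \ref{tame-normal-form} are as follows.
\begin{itemize}
\item $k-1=5\mid 2g$ gives $5\mid g$.
\item $\gcd(2g,6)\le 2$ gives $3\nmid g$.
\item $k\le g$ together with $5\mid g$ gives $g\ge 10$.
\item $v\in\{1,\dots,5\}$ with $\gcd(v,6)=1$ leaves only $v\in\{1,5\}$.
\end{itemize}
Put $M=2g/5$ and $N=6M=\frac{12}{5}g$. Then $v=1$ gives $s=\frac{2}{5}g$, which is the curve $\mathcal L(g)$. Likewise $v=5$ gives $s=2g$, which is $\mathcal M(g)$.

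Next I use Proposition \ref{maindi}, i.e.\ $d=\gcd(N,s+2)\ge 3$, to pin down $g \pmod 3$.
\begin{itemize}
\item For $v=1$, $3\mid M+2$ means $M\equiv 1\pmod 3$, i.e.\ $\frac{2g}{5}\equiv 1$, i.e.\ $g\equiv 1\pmod 3$.
\item For $v=5$, $3\mid 5M+2$ means $M\equiv 2\pmod 3$, which gives $g\equiv 2\pmod 3$.
\end{itemize}

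The main obstacle is that $d\ge 3$ does not by itself force $3\mid d$. One has $\gcd(6M,M+2)\mid 12$, and $d=4$ can occur with no factor $3$ when $M\equiv 2\pmod 4$ (similarly for $5M+2$). In that case I would fall back on the sharper necessary conditions.

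First, Theorem \ref{abgr} gives $lM\equiv 2\pmod 6$ with $\gcd(l,6)=1$, hence $lM\equiv 2 \pmod 3$ and $M\not\equiv 0\pmod 3$. To connect $l$ with $s$, I would use the explicit form of $\sigma$ in case (B) of Proposition \ref{3cases}. There $\sigma(x)=1/x$ and $\div(\sigma(y)/y)=(s+2)(Q_1-P_1)$, which forces $\sigma(y)=c\,(x+1)^a/y^{s+1}$. Then $\sigma\rho\sigma=\rho^{-(s+1)}$, so $lM\equiv -s \pmod N$, and $lM\equiv 2\pmod 6$ becomes $s+2\equiv 0\pmod 6$. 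In particular $3\mid s+2$, which is exactly the congruence used above.

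If that identification gets awkward, the alternative is to check the Weierstrass criterion of Theorem \ref{trentin} with $d=4$ and show it fails.

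Conversely, for $k=6$ even, Theorem \ref{suffspettro} asks that $v$ be the inverse of $g$ modulo $3$. This holds for $v=1$ when $g\equiv 1\pmod 3$, and for $v=5\equiv 2$ when $g\equiv 2\pmod 3$. So $\mathcal L(g)$ and $\mathcal M(g)$ indeed admit a group of order $\frac{24}{5}g$ with a cyclic subgroup of index $2$.
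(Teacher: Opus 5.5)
Your proof is correct. Its skeleton matches the paper's: reduction to $k=6$ in Theorem~\ref{tame-normal-form}, the constraints $5\mid g$, $3\nmid g$, $g\ge 10$, $v\in\{1,5\}$, and the converse via Theorem~\ref{suffspettro}. The two routes differ only in how $g \bmod 3$ is pinned down.

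The paper writes $g=5l$ and uses $d=\gcd(N,s+2)\ge 3$ from Proposition~\ref{maindi}. This gives that $2$ or $3$ divides $l+1$ (resp.\ $l-1$). In the case $2\mid l+1$ (resp.\ $2\mid l-1$) it then falls back on $\gcd(N,s+1)=1$ and $3\nmid g$ to exclude the wrong residue.

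You bypass the Weierstrass-semigroup input altogether. From case (B) of Proposition~\ref{3cases} you obtain $\sigma\rho\sigma=\rho^{-(s+1)}$, and with $lM\equiv 2\pmod{N/M}$ from Theorem~\ref{abgr} this gives $6\mid s+2$. This proves more than the statement needs: in general it shows $N/M\mid s+2$, i.e.\ $N\mid s(s+2)$, so the sufficient condition of Proposition~\ref{espliciti} is actually necessary.

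However, the step ``which forces $\sigma(y)=c(x+1)^a/y^{s+1}$'' is only asserted. It is easy to fill:
\begin{itemize}
\item write $\sigma\rho\sigma=\rho^u$ with $u^2\equiv 1\pmod N$;
\item then $\sigma(y)$ is a $\rho$-eigenfunction with eigenvalue $\xi^u$, so $\sigma(y)=y^u h(x)$ with $h\in\mathbb K(x)$;
\item since $\mathrm{ord}_{P_1}(y)=1$ and $\mathrm{ord}_{P_1}(x)=N$, comparing orders at $P_1$ with $\div(\sigma(y)/y)=(s+2)(Q_1-P_1)$ yields $u\equiv -(s+1)\pmod N$ directly, without the explicit formula.
\end{itemize}

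Finally, the obstacle you flagged dissolves in one line. Since $3\mid N$ and $\gcd(N,s+1)=1$, we get $3\nmid s+1$. Also $3\nmid s=vM$, because $3\nmid v$ and $3\nmid M=2g/5$. Hence $s\equiv 1\pmod 3$, i.e.\ $3\mid s+2$, which is essentially what the paper's second case uses.
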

\begin{proof}
The case $|H|=24/5g$ corresponds to $k=6$ in Theorem \ref{tame-normal-form}. Therefore, $g\geq 6$ and the possibilities for $v$ in Equation \ref{esplicita} are $v=1,5$ which correspond to the curves $\mathcal L(g)$ and $\mathcal M(g)$ in the statement. 
As $k-1=5$ is a divisor of $2g$ we have $g\equiv 0\pmod 5$, while $\gcd(k,2g)\leq 2$ yields $g\not\equiv 0\pmod{3}$. In particular, $g\geq 10$. Now, let $g=5l$ for some $l\geq 2$. If $v=1$ then $s=2/5g$, and $\gcd(N,s+2)\geq 3$ yields
$\gcd(12l,2l+2)\geq 3$, that is either $2\mid l+1$ or $3\mid l+1$. 
In the latter case, $g\equiv 1\pmod 3$. In the former case, as $\gcd(N,s+1)=1$, we have that $3$ does not divide $2l+1$. Therefore, $l\not\equiv 1 \pmod 3$ and $g\not\equiv 2\pmod 3$. As $g\not\equiv 0\pmod 3$, $g\equiv 1\pmod 3$ holds.
With the same approach, if $v=5$ then $s=2g$ and $\gcd(N,s+2)\geq 3$ yields that either $2\mid l-1$ or $3\mid l-1$. In the latter case $g\equiv 2\pmod 3$ holds. As $\gcd(N,s+1)=1$, arguing as for $v=1$ we obtain $l\not\equiv 2\pmod 3$ and hence $g\equiv 2\pmod 3$.
To conclude the proof, we need to show that the curves $\mathcal L(g)$ and $\mathcal M(g)$ have an automorphism group of order $\frac{24}{5}g$ admitting a cyclic subgroup of index $2$. This follows from Theorem \ref{suffspettro}.
\end{proof}

\begin{rem} By Proposition \ref{espliciti}, for all curves listed above, the group $H$ has the following abstract form:
$$
H=\langle\rho,\sigma \mid \rho^N=\sigma^2=1,\quad \sigma\rho\sigma=\rho^{N-s-1}\rangle.
$$
\end{rem}


\section*{Acknowledgements}
The authors thank the Italian National Group for Algebraic and Geometric Structures and their Applications (GNSAGA—INdAM)
which supported the research. 

\end{document}